\DeclareFontFamily{U}{russian}{}
\DeclareFontShape{U}{russian}{m}{n}
        { <5><6> wncyr5
        <7><8><9> wncyr7
        <10><10.95><12><14.4><17.28><20.74><24.88> wncyr10 }{}
\DeclareSymbolFont{Russian}{U}{russian}{m}{n}
\DeclareSymbolFontAlphabet{\mathcyr}{Russian}
\let\@math@cyr\mathcyr
\renewcommand{\mathcyr}[1]{\@math@cyr{\cyracc #1}}
\theoremstyle{plain}
\newtheorem{thm}{Theorem}[section]
\newtheorem{conj}[thm]{Conjecture}
\newtheorem{question}[thm]{Question}
\newtheorem{cor}[thm]{Corollary}
\newtheorem{prop}[thm]{Proposition}
\theoremstyle{definition}
\newtheorem{defn}[thm]{Definition}
\newtheorem{rmk}[thm]{Remark}
\newtheorem{rmks}[thm]{Remarks}
\newtheorem{example}[thm]{Example}
\numberwithin{equation}{section}
\newcommand{\congru}[3]{#1 \equiv #2 \!\!\mod #3}
\newcommand{\poonen}{{\smash{\bullet}}}
\newcommand{\Gm}{\mathbf{G}_\mathrm{m}}
\def\myrightarrow{{\setbox\z@\hbox{$\rightarrow$}\dimen0\ht\z@\multiply\dimen0 6\divide\dimen0 10\ht\z@\dimen0\box\z@}}
\def\myrightarrowfill@{\arrowfill@\relbar\relbar\myrightarrow}
\newcommand{\myxrightarrow}[2][]{\ext@arrow 0359\myrightarrowfill@{#1}{#2}}
\def\myleftarrow{{\setbox\z@\hbox{$\leftarrow$}\dimen0\ht\z@\multiply\dimen0 6\divide\dimen0 10\ht\z@\dimen0\box\z@}}
\def\myleftarrowfill@{\arrowfill@\myleftarrow\relbar\relbar}
\newcommand{\myxleftarrow}[2][]{\ext@arrow 3095\myleftarrowfill@{#1}{#2}}
\newcommand{\sF}{{\mathscr F}}
\newcommand{\sO}{{\mathscr O}}
\newcommand{\sX}{{\mathscr X}}
\newcommand{\A}{{\mathbf A}}
\renewcommand{\C}{{\mathbf C}}
\renewcommand{\Im}{{\mathrm{Im}}}
\newcommand{\N}{{\mathbf N}}
\renewcommand{\P}{{\mathbf P}}
\newcommand{\Q}{{\mathbf Q}}
\newcommand{\R}{{\mathbf R}}
\newcommand{\Z}{{\mathbf Z}}
\newcommand{\bark}{\mkern2mu\overline{\mkern-2mu \smash{k}\vphantom{Y}\mkern-1.5mu}\mkern1.5mu}
\newcommand{\barY}{\mkern3.5mu\overline{\mkern-3.5mu Y\mkern-3.8mu}\mkern3.8mu}
\newcommand{\barD}{\mkern5.5mu\overline{\mkern-5.5mu D\mkern-3.4mu}\mkern3.4mu}
\newcommand{\pibar}{\mkern2.2mu\overline{\mkern-2.2mu \pi\mkern-2mu}\mkern2mu}
\newcommand{\CH}{\mathrm{CH}}
\newcommand{\Gal}{\mathrm{Gal}}
\newcommand{\Sym}{\mathrm{Sym}}
\newcommand{\Aut}{\mathrm{Aut}}
\newcommand{\inv}{\mathrm{inv}}
\newcommand{\Pic}{\mathrm{Pic}}
\newcommand{\End}{\mathrm{End}}
\newcommand{\Br}{\mathrm{Br}}
\newcommand{\Spec}{\mathrm{Spec}}
\newcommand{\Div}{\mathrm{Div}}
\renewcommand{\phi}{\varphi}
\renewcommand{\emptyset}{\varnothing}
\newcommand{\Ker}{{\mathrm{Ker}}}
\newcommand{\Hom}{{\mathrm{Hom}}}
\newcommand{\Homrond}{\mathscr{H}\mkern-4muom}
\newcommand{\et}{\text{ét}}
\date{April 26th, 2016; revised on August 25th, 2016}
\title[Rational points and zero-cycles on rationally connected varieties]{Rational points and zero-cycles on rationally connected varieties over number fields}
\author{Olivier Wittenberg}
\address{D\'epartement de math\'ematiques et applications, \'Ecole normale sup\'erieure, 45~rue d'Ulm, 75230 Paris Cedex 05, France}
\email{wittenberg@dma.ens.fr}
\begin{document}

\begin{abstract}
We report on progress in the qualitative study of rational points on
rationally connected varieties over number fields,
also examining integral points, zero-cycles, and non-rationally connected varieties.
One of the main objectives is to
 highlight
and explain the many recent interactions with analytic number theory.
\end{abstract}

\maketitle

\section{Introduction}

A striking aspect of recent progress in the qualitative study
of rational points on rationally connected varieties over number fields
has been the emergence of new interactions between analytic number theory,
on the one hand,
and algebro-geometric methods, on the other hand.
At least three other related directions have witnessed significant advances
in the past decade:
the study of various obstructions to the existence of rational points on non-rationally connected varieties,
now all
encapsulated in the so-called étale Brauer--Manin obstruction, and of
their insufficiency to explain the lack of rational points on certain types of varieties;
the study of integral points on non-proper varieties, reshaped by the unexpected
discovery that
Brauer--Manin obstructions are relevant to integral points too;
and the existence and approximation properties of zero-cycles on varieties defined over number fields, which were studied
concurrently with similar properties for rational points, progress being achieved in parallel in the two frameworks although
results on zero-cycles do not depend on analytic number theory.

These expository notes are an attempt
to give a coherent and up-to-date overview
of the above topics and of the
general context to which they belong.
The first half of the text, \textsection\ref{sec:definitionsandquestions},
formulates and discusses
some of the main qualitative questions that can be asked about rational points,
integral points, and zero-cycles, on algebraic varieties defined over number fields.
To attack these questions for varieties which are either rationally connected or close enough to being so,
a number of general tools are available:
methods from analytic number theory,
Galois cohomological methods,
and two geometric methods known as
the descent method
and the fibration method.
The second half of the text, \textsection\ref{sec:methods}, is devoted to these methods,
and to the existing interactions between them, through examples.

Many interesting topics had to be left out altogether or are only briefly mentioned;
for instance, we do not touch on the quantitative aspects of rational and integral points
or on the analogues over function fields of complex curves
of the questions discussed here over number fields.
The reader will find complementary
material in~\cite{peyrebourbaki},
\cite{browningquantitativearithmetic},
\cite{tschinkelmanyrationalpoints},
\cite{abramovich},
\cite{hassettwa},
\cite{wittrc}.

\bigskip
\emph{Acknowledgements.}
I am grateful to the organisers of the 2015 AMS Algebraic Geometry Summer Institute
in Salt Lake City 
for their invitation to contribute to this volume.
I would also like to thank
Tim Browning, Jean-Louis Colliot-Thélène, Cyril Demarche, Yonatan Harpaz and Florent Jouve
for discussions on several points of these notes,
as well as the audiences in Rio de Janeiro and in Atlanta, where I~lectured
on these topics in~2015,
and the referees for their useful comments.

\section{Over number fields: general context}
\label{sec:definitionsandquestions}

We fix once and for all a number field~$k$.  We denote by~$\Omega$ the set of places of~$k$,
by $\Omega_f$ (resp., $\Omega_\infty$) the subset of finite (resp., infinite) places,
by~$k_v$ the completion of~$k$ at $v \in \Omega$, by $\sO_v \subset k_v$ the ring of integers of~$k_v$ for $v\in \Omega_f$,
and by~$\A_k$ the ring
of ad\`eles of~$k$, defined as $\A_k = \varinjlim \A_{k,S}$ where the direct limit ranges over the finite subsets~$S$ of~$\Omega$ ordered by inclusion
and where
$\A_{k,S} = \prod_{v \in S} k_v \times \prod_{v \in \Omega\setminus S} \sO_v$
for any finite subset $S \subset \Omega$
containing~$\Omega_\infty$.  The reader is welcome to assume throughout that $k=\Q$ and that
the fields~$k_v$ for $v \in\Omega$
are the fields~$\Q_p$ of $p$\nobreakdash-adic numbers for the various primes~$p$
and the field~$\R$ of real numbers.

Let~$X$ be a smooth, proper, geometrically irreducible variety over~$k$.
As~$X$ is proper, we have $X(\A_k)=\prod_{v \in \Omega}X(k_v)$
as topological spaces.
Any rational point gives rise to an adelic point through
the diagonal embedding $X(k) \subseteq X(\A_k)$.

It has long been understood that determining whether $X(\A_k)$ is empty or not
can be reduced to a finite computation (often easy in practice).
This follows from
the Lang--Weil estimates and
Hensel's lemma, which guarantee that $X(k_v) \neq\emptyset$ as soon as~$v$
does not belong to a certain computable finite set of places of~$k$;
from the work of Tarski and Seidenberg,
for the real places of~$k$; and from Hensel's lemma for the finitely many remaining finite places.

Sometimes,
the existence of an adelic point on~$X$ is sufficient to imply the existence of a rational point.
This is so, for instance, when~$X$ is a quadric, according to the Hasse--Minkowski theorem.
In general, however, further obstructions exist and must be taken into account.  We discuss some of these in
\textsection\textsection\ref{subsec:bmo}--\ref{subsec:furtherobstructions}.
In~\textsection\ref{subsec:requivalence}
we introduce $R$\nobreakdash-equivalence of rational points.
In the case of rationally connected varieties,
a precise conjecture describing the closure of~$X(k)$ in~$X(\A_k)$ is available (see~\textsection\ref{subsec:bmo});
we examine non-rationally connected varieties in~\textsection\ref{subsec:nonrc}.
We then consider variants of these questions in the context of
zero-cycles, and in that of integral points on non-proper varieties,
in~\textsection\ref{subsec:zerocycles} and~\textsection\ref{subsec:integralpoints}, respectively.

\subsection{Brauer--Manin obstruction}
\label{subsec:bmo}

For any scheme~$S$, let $\Br(S)=H^2_\et(S,\Gm)$ denote the (cohomological) Brauer group of~$S$.
Local and global class field theory describe this group when $S=\Spec(k_v)$
and $S=\Spec(k)$.  Specifically, for $v \in \Omega$, local class field theory equips $\Br(k_v)$ with a
canonical injection $\inv_v:\Br(k_v)\to \Q/\Z$ (an isomorphism when~$v$ is finite,
while $\Br(\R)=\Z/2\Z$ and $\Br(\C)=0$), and global class field theory provides a canonical exact sequence
\begin{align}
\label{eq:sebr}
\xymatrix@C=2.5em{
0 \ar[r] & \Br(k) \ar[r] & \displaystyle\bigoplus_{v\in\Omega}\Br(k_v) \ar[r]^(.58){\sum\inv_v}& \Q/\Z \ar[r] & 0 \rlap{\text{.}}
}
\end{align}
The assertion that~\eqref{eq:sebr} is a complex is the \emph{global reciprocity law}, a generalisation
of the quadratic reciprocity law and of many of its variants (cubic, biquadratic...).

\newcommand{\citemaninicm}{Manin~\cite{maninicm}}
\begin{defn}[\citemaninicm]
\label{def:bmset}
For a subgroup $B \subseteq \Br(X)$, we let
\begin{align*}
X(\A_k)^B = \left\{ (P_v)_{v \in \Omega} \in X(\A_k) \mkern3mu;\mkern3mu\forall \alpha\in B, \sum_{v\in\Omega}\inv_v\mkern2.5mu\alpha(P_v)=0 \in \Q/\Z \right\}\rlap{\text{,}}
\end{align*}
where $\alpha(P_v)\in \Br(k_v)$ denotes the pull-back of~$\alpha$ along $P_v \in X(k_v)$.
(To make sense of the infinite sum, one first checks that only finitely many of its terms are non-zero.)
The \emph{Brauer--Manin set} of~$X$ is the set $X(\A_k)^{\Br(X)}$.
\end{defn}

By the global reciprocity law, we have the following sequence of inclusions:
\begin{align}
\label{eq:seqinclusions}
X(k)\subseteq X(\A_k)^{\Br(X)} \subseteq X(\A_k)\rlap{\text{.}}
\end{align}

\newcommand{\citeiskovskikh}{\cite{iskocounterexample}, \cite{css80}}
\begin{example}[\citeiskovskikh]
\label{ex:isko}
Let~$X$ be a smooth and proper model of the affine surface over~$\Q$ defined by $x^2+y^2=(3-t^2)(t^2-2)$.
It is easy to see that $X(\A_\Q)\neq\emptyset$.  However,
letting~$\Q(X)$ denote the function field of~$X$,
one can check that
the quaternion algebra $(-1,3-t^2) \in \Br(\Q(X))$ extends (uniquely) to a class $\alpha \in \Br(X)$
and that $\alpha(P_v)=0$ for any $P_v \in X(\Q_v)$ and any $v \in \Omega\setminus \{2\}$ while
$\inv_2\mkern2.5mu\alpha(P_2)=\frac{1}{2}$ for any $P_2 \in X(\Q_2)$,
so that the Brauer--Manin set of~$X$ is empty and hence~$X$ has no rational point.
\end{example}

It is a general fact that $X(\A_k)^{\Br(X)}$ is
a closed subset of $X(\A_k)$ and therefore contains the closure of~$X(k)$.
Sometimes, the Brauer--Manin set is nevertheless too crude an approximation for the closure of~$X(k)$
for a trivial reason:
if~$v$ denotes an archimedean place,
the evaluation of classes of~$\Br(X)$ on~$X(k_v)$ is constant on the connected components
whereas the closure of~$X(k)$ in~$X(k_v)$ need not be a union of connected components.
(Example: the plane curve $x^3+y^3+z^3=0$ in~$\P^2_\Q$ only has three rational points.)
To circumvent this defect, it is convenient
to squash to single points the connected components
at the archimedean places and
to consider, with Poonen (see~\cite[\textsection2]{stoll}), the set of \emph{modified adelic points}
\begin{align}
X(\A_k)_\poonen=\prod_{v \in \Omega_f} X(k_v) \times \prod_{v \in \Omega_\infty} \pi_0(X(k_v))
\end{align}
(where~$\pi_0$ stands for the set of connected components)
and the \emph{modified Brauer--Manin set} $X(\A_k)^{\Br(X)}_\poonen=\Im\big(X(\A_k)^{\Br(X)}\to X(\A_k)_\poonen\big)$.

For arbitrary varieties,
rational points need not be dense even in the modified Brauer--Manin set;
see~\textsection\ref{subsec:etalecovers}
and~\textsection\ref{subsec:furtherobstructions} below.
For varieties whose geometry is simple enough, however, there is hope that the Brauer--Manin obstruction controls rational points entirely.
The following represents the main open question in the arithmetic of rationally connected varieties over number fields.
In the case
of geometrically rational surfaces, it was put forward by
Colliot-Thélène and Sansuc in the 1970's.

\newcommand{\citeconjct}{Colliot-Thélène~\cite[p.~174]{ctbudapest}}
\begin{conj}[\citeconjct]
\label{conj:ctrc}
Let~$X$ be a smooth, proper, geometrically irreducible variety defined over a number field~$k$.
If~$X$ is rationally connected, then~$X(k)$ is dense in $X(\A_k)^{\Br(X)}$.
\end{conj}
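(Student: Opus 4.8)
This is one of the central open problems of the subject, so what follows is a description of a \emph{program} toward the conjecture rather than a complete proof; no unconditional argument is known.

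The plan is to proceed by dévissage, reducing the general case to a list of ``building-block'' varieties for which the statement can be established directly. First I would use the structure theory of rationally connected varieties to fibre $X$, after blowing up, by a dominant morphism $f\colon X \to Y$ with $Y$ rationally connected of smaller dimension and with rationally connected generic fibre; iterating, one hopes to reduce first to surfaces, and then to conic bundles over $\P^1_k$ and to del Pezzo surfaces. For the base cases one would invoke the situations in which the conjecture is already a theorem: Châtelet surfaces and their generalisations (Colliot-Thélène--Sansuc--Swinnerton-Dyer), conic bundles over $\P^1$ with few degenerate fibres, and del Pezzo surfaces of large degree together with special families in low degree.

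The engine of the dévissage is the fibration method combined with descent. Given $f\colon X \to \P^1_k$ with rationally connected fibres and an adelic point $(P_v) \in X(\A_k)^{\Br(X)}$, the goal is to find $t \in \P^1(k)$ such that (i) $X_t$ is smooth, (ii) $X_t$ carries an adelic point arbitrarily close to $(P_v)$, and (iii) that adelic point lies in $X_t(\A_k)^{\Br(X_t)}$; then the conjecture applied to $X_t$ produces a rational point of $X$ near $(P_v)$. Arranging (i)--(iii) simultaneously forces one to exhibit many $t$ with prescribed local behaviour at finitely many places together with controlled factorisation of the polynomials governing the degenerate fibres of $f$, and this is exactly where analytic number theory enters: additive combinatorics, sieve methods, and descent on the base $\P^1$ are all deployed at this step.

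The main obstacle --- and the reason the conjecture remains open --- is twofold. Geometrically, one does not know in general how to control the vertical Brauer classes of $f$, and still less the transcendental part of $\Br(X)$, under specialisation, so that condition (iii) is genuinely hard to secure; moreover the dévissage to surfaces is not known to be possible in all cases. Analytically, producing rational points $t$ on $\P^1$ for which the fibre is everywhere locally soluble requires understanding the distribution of such $t$, and the available tools fall short of the unconditional statement unless one assumes a hypothesis of Schinzel type or a finite-complexity analogue thereof. A complete proof therefore seems to demand either a decisive advance on the analytic input or a genuinely new geometric idea bypassing the fibration/descent dichotomy.
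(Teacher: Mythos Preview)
The statement in question is a \emph{conjecture}, not a theorem, and the paper does not prove it; indeed, the paper states explicitly that it ``is wide open even for surfaces.'' You correctly recognise this at the outset and present a program rather than a proof, so there is no proof in the paper to compare against.

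That said, a comment on your program. The fibration/descent/analytic-input strategy you outline in the second and third paragraphs is a fair summary of the methods surveyed in the paper (\textsection\ref{subsec:descent}--\textsection\ref{subsec:fibration}), and the obstacles you name---transcendental Brauer classes, the need for Schinzel-type input---are the right ones. However, your very first step is not known to be available: there is no structure theorem which, given an arbitrary smooth proper rationally connected~$X$, produces a fibration $f\colon X\to Y$ with~$Y$ rationally connected of strictly smaller dimension and rationally connected generic fibre. The MRC fibration runs in the wrong direction for this purpose, and in general a rationally connected variety need not admit any nontrivial fibration structure at all (think of a general Fano hypersurface of large dimension, whose Picard group is~$\Z$). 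So the d\'evissage to surfaces that you propose is itself an open problem, not a routine reduction; the paper's approach to such varieties, when it applies at all, is via the circle method (Theorem~\ref{th:bhb}) rather than via fibrations. Your program is therefore best read as a description of what is done in the cases where a fibration \emph{is} present, rather than as a route to the general conjecture.
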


By ``$X$ is rationally connected'', we mean that
the variety $X\otimes_k \bark$ is rationally connected in the sense of Campana, Kollár, Miyaoka and Mori,
where~$\bark$ denotes an algebraic closure of~$k$.
Equivalently,
for any algebraically closed field~$K$
containing~$k$, two general $K$\nobreakdash-points of~$X$ can be joined by a rational curve defined over~$K$.
We recall that examples of rationally connected varieties include geometrically unirational varieties
(trivial),
Fano varieties (see~\cite{campanafano}, \cite{kmm}), and
fibrations into rationally connected varieties over a rationally
connected base (see~\cite{ghs}).

Conjecture~\ref{conj:ctrc} is wide open even for surfaces.

\begin{rmks}
\label{rmks:conjct}
(i) The group $\Br_0(X)=\Im(\Br(k)\to\Br(X))$ does not contribute to the Brauer--Manin set:
the global reciprocity law
implies the equality
 $X(\A_k)^B=X(\A_k)^{B+\Br_0(X)}$
for any subgroup $B \subseteq \Br(X)$.
When $\Br(X)/\Br_0(X)$ is finite, the subset $X(\A_k)^{\Br(X)} \subseteq X(\A_k)$ is therefore
cut out by finitely many conditions. In particular,
in this case,
it is an open subset of $X(\A_k)$.

(ii) When~$X$ is rationally connected, or, more generally, when~$X$ is
simply connected and $H^2(X,\sO_X)=0$, an analysis of the Hochschild--Serre
spectral sequence and of the Brauer group of $X\otimes_k\bark$ implies that
 $\Br(X)/\Br_0(X)$ is finite
(see \cite[Lemma~1.1]{ctskogoodreduction}).  Presumably, this quotient should
be finite under the sole assumption that~$X$ is simply connected, but this is out of reach of current knowledge.
(As follows from~\cite{ctskodescentebr} and
\cite[\textsection4]{ctskogoodreduction},
the finiteness of the $\ell$\nobreakdash-primary torsion subgroup of $\Br(X)/\Br_0(X)$ is equivalent,
when~$X$ is simply connected,
to the $\ell$\nobreakdash-adic Tate conjecture for divisors on~$X$.)

(iii) Letting~$v$ be any archimedean place of~$k$
and noting that
the image of the projection map $X(\A_k)^{\Br(X)} \to X(k_v)$ is a union of connected components,
we see that
Conjecture~\ref{conj:ctrc} would imply that
rational points are dense for the Zariski topology
on any (smooth) rationally connected variety
which possesses a rational point.
(By Remarks~\ref{rmks:conjct}~(i) and~(ii), this would also follow from the
weaker conjecture that~$X(k)$ is dense in $X(\A_k)^{\Br(X)}_\poonen$.)
Even Zariski density of rational points after a finite extension of the ground field
is unknown for conic bundles over a rational surface
(see~\cite{hassettpotentialdensity} for more on this topic).

(iv) The following are birational invariants of smooth and proper varieties over~$k$:
the Brauer group
(\cite[III, \textsection7]{grbr}), the property that $X(k)\neq\emptyset$
(Nishimura's lemma, see \cite[Prop.~A.6]{reichsteinyoussin}),
the property that $X(\A_k)^{\Br(X)}\neq\emptyset$
(see \cite[Prop.~6.1]{cps}),
and,
when $\Br(X)/\Br_0(X)$ is finite,
in particular when~$X$ is rationally connected,
also the
property that~$X(k)$ is dense in~$X(\A_k)^{\Br(X)}$
(\emph{loc.\ cit.}).

(v) If $X \subset \P^n_k$ is a smooth complete intersection of dimension at least~$3$,
then $\Br(X)/\Br_0(X)=0$ (see~\cite[Appendix~A]{poonenvoloch}), so that $X(\A_k)^{\Br(X)}=X(\A_k)$.  Thus, if in addition~$X$ is Fano,
Conjecture~\ref{conj:ctrc} predicts that $X(k)$ should be dense in
$X(\A_k)$. In other words, the condition $X(k_v)\neq\emptyset$ for all $v\in\Omega$
should imply the existence of a rational point, and when a rational point exists,
the image of~$X(k)$ in $\prod_{v\in S} X(k_v)$ should be dense
for every finite subset $S \subset \Omega$.

(vi) Even when~$X$ is rationally connected, ``computing'' $\Br(X)/\Br_0(X)$ is seldom an easy task, be it understood
in the weak sense of determining the isomorphism class of this finite abelian group
or in the strong sense of explicitly representing, by central simple algebras
over~$k(X)$, a finite set of classes of $\Br(X)$ which generate the quotient.
As the work of
 Uematsu~\cite{uematsudiagonalcubic}
clearly demonstrates,
the stronger problem, whose solution is essential
for evaluating the Brauer--Manin set, is not of a purely algebraic and geometric nature.
The required arithmetic input can be very delicate
(see~\cite{weinormtori} for a striking example).
Nevertheless, in a number of interesting cases, there exist practical algorithms that can solve this question
for
a given~$X$
and output a concrete description of the Brauer--Manin set,
thus deciding, in particular, whether it is empty,
or equal to $X(\A_k)$, or neither.
Such algorithms have been implemented for many del Pezzo surfaces
(all del Pezzo surfaces of degree~$4$, see~\cite{bbfl}, \cite{vavvertical};
most cubic surfaces, see~\cite{ctks}, \cite{elsenhansjahnelcubic1}, \cite{elsenhansjahnelcubic2};
some del Pezzo surfaces of degree~$2$, see~\cite{corndp2}).
Kresch and Tschinkel~\cite{kreschtschinkelalg} have given an algorithm
which can be applied to a geometrically
rational variety as soon as a sufficiently explicit description of
its geometric Picard group is known
(see \emph{op.\ cit.}\ for a precise statement).

(vii)
Let $\Br_1(X)=\Ker(\Br(X)\to\Br(X\otimes_k\bark))$
denote the \emph{algebraic} Brauer group.
If~$X$ is geometrically rational,
then
$\Br_1(X)=\Br(X)$,
as follows from Remark~\ref{rmks:conjct}~(iv)
and from the vanishing of $\Br(\P^n_{\bark})$.
\emph{Transcendental} elements of~$\Br(X)$ (\emph{i.e.}, elements which do not belong to $\Br_1(X)$)
are more difficult to exhibit and to exploit than algebraic ones.
However, they cannot be ignored
in the formulation of Conjecture~\ref{conj:ctrc}:
there exist smooth and proper rationally connected threefolds~$X$ over~$\Q$
such that
\begin{align}
\emptyset=X(\Q)=X(\A_\Q)^{\Br(X)}
\subset X(\A_\Q)^{\Br_1(X)}=X(\A_\Q)\neq\emptyset
\end{align}
(see~\cite{hararitranscendant}).
Another example of a rationally connected variety~$X$ over a number field~$k$
such that $X(\A_k)^{\Br(X)}\neq X(\A_k)^{\Br_1(X)}$
is given in~\cite{demarchelucchinineftin}.
Transcendental elements and their influence
on the Brauer--Manin set
have received a lot of attention
for other classes of varieties as well (see~\cite{witttransc},
\cite{ieronymou},
\cite{hassettvarillyvarilly},
\cite{preu},
\cite{hassettvarilly},
\cite{ieronymouskorobogatov},
\cite{newtontransc},
\cite{creutzviray},
\cite{mckstva} for examples of~$K3$ and Enriques surfaces
for which transcendental elements play a role
in the Brauer--Manin set).
\end{rmks}

For more information on the Brauer group, we refer the reader
to~\cite{ctnotesbrauer}.

\subsection{Rational points and \'etale covers}
\label{subsec:etalecovers}

When~$X$ is not simply connected, considering the Brauer--Manin
sets of all étale covers of~$X$ leads to more precise information than
considering the Brauer--Manin
set of~$X$ alone.
This idea is originally due to Skorobogatov~\cite{skorobeyond},
who managed, in this way,
to give the first example of a smooth and proper variety~$X$ over~$k$
such that $X(k)=\emptyset$ but $X(\A_k)^{\Br(X)}\neq\emptyset$
(a bielliptic surface).
Let us explain the underlying mechanism.

\begin{prop}
\label{prop:twist}
Let~$\barY$ be an irreducible finite étale cover
of~$X\otimes_k\bark$,
Galois over~$X$.
Let $x \in X(k)$.
There exists a morphism $\pi:Y\to X$
such that the schemes $Y\otimes_k\bark$ and~$\barY$
are isomorphic
over $X \otimes_k \bark$
and
 such that $x \in \pi(Y(k))$.
\end{prop}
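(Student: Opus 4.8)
The plan is to recognize that specifying an irreducible finite étale cover $\barY \to X\otimes_k\bark$ that is Galois over $X$ is the same as specifying a continuous surjection $\rho\colon \pi_1(X,\bar x)\surj G$ whose restriction to the geometric fundamental group $\pi_1(X\otimes_k\bark,\bar x)$ is still surjective (here $G=\Aut(\barY/X)$ and $\bar x$ is a geometric point above $x$); the cover $\barY$ together with its $G$-action corresponds, over $X\otimes_k\bark$, to the restriction of $\rho$ to the geometric fundamental group, which is a priori only a $\pi_1(X\otimes_k\bark,\bar x)$-set, not a $\pi_1(X,\bar x)$-set. The point $x\in X(k)$ gives a section $s_x\colon \Gal(\bark/k)\to\pi_1(X,\bar x)$ of the fundamental exact sequence, and I would use $s_x$ to \emph{modify} the Galois action on $\barY$ so that it descends to $k$ while still containing $x$ in the image of the $k$-points.

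Concretely, first I would fix a pointed geometric connected cover $\bar p\colon (\barY,\bar y)\to (X\otimes_k\bark,\bar x)$ representing $\barY$, so that $\pi_1(\barY,\bar y)$ is an open normal subgroup of $\pi_1(X\otimes_k\bark,\bar x)$, and I would let $H\subseteq \pi_1(X\otimes_k\bark,\bar x)$ be this subgroup; then $G\cong \pi_1(X\otimes_k\bark,\bar x)/H$ acts on the fiber. The section $s_x$ does not in general normalize $H$ inside $\pi_1(X,\bar x)$, so $\barY$ need not be defined over $k$ on the nose; but the key observation is that one is allowed to twist. Using $s_x$, define a new action of $\Gal(\bark/k)$ on the geometric cover $\barY$ by letting $\sigma\in\Gal(\bark/k)$ act through the automorphism of $X\otimes_k\bark$ it induces together with the \emph{inner} adjustment coming from conjugation by $s_x(\sigma)$ inside $\pi_1(X,\bar x)$. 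Since conjugation by $s_x(\sigma)$ is an automorphism of $\pi_1(X\otimes_k\bark,\bar x)$, it carries $H$ to some conjugate, and after composing with the geometric Galois action one gets a genuine semilinear descent datum on a cover isomorphic to $\barY$ over $X\otimes_k\bark$. By Galois descent for finite étale morphisms (effectivity of descent along $\Spec \bark\to\Spec k$, valid because finite étale morphisms are affine), this descent datum is effective and produces $\pi\colon Y\to X$ with $Y\otimes_k\bark\cong\barY$ over $X\otimes_k\bark$.

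It remains to check the two requirements. That $Y\otimes_k\bark$ is isomorphic to $\barY$ over $X\otimes_k\bark$ is built into the construction. For $x\in\pi(Y(k))$: the fiber of $Y$ over $x$ is, as a $\Gal(\bark/k)$-set, the fiber of $\barY$ over $\bar x$ with the Galois action twisted by $s_x$ composed with the $\pi_1$-action; but the \emph{marked} point $\bar y$ in the fiber over $\bar x$ is by construction fixed by this twisted action precisely because the twist was defined to be conjugation by $s_x(\sigma)$, i.e.\ the twisting trivializes the action at the point lying over the chosen section — this is the standard ``twist by a cocycle to split a point'' trick, and it is exactly the mechanism Skorobogatov uses. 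Hence $\bar y$ descends to a $k$-point $y\in Y(k)$ with $\pi(y)=x$. The main obstacle, and the step deserving the most care in a full write-up, is the bookkeeping in this twisting construction: making precise in what sense $s_x$ ``acts'' on the cover when $s_x(\sigma)$ does not normalize $H$, which forces one to work with the cover up to isomorphism rather than a fixed subgroup, and to phrase descent in terms of semilinear automorphisms of the $X\otimes_k\bark$-scheme $\barY$ rather than in terms of $\pi_1$-subgroups. Once that is set up cleanly — e.g.\ by passing to the category of finite étale $X$-schemes and using that $x$ furnishes a fiber functor whose automorphisms recover $\Gal(\bark/k)$ — both the isomorphism over $\bark$ and the rational point in the fiber fall out formally.
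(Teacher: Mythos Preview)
Your approach is correct in outline and is, at bottom, the same argument as the paper's, but you have misread the hypothesis and this makes your write-up considerably more complicated than necessary.

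The key point you miss is what ``Galois over~$X$'' buys you. By definition it means that the field extension $\bark(\barY)/k(X)$ is Galois, so the open subgroup $H=\pi_1(\barY,\bar y)\subseteq\pi_1(X\otimes_k\bark,\bar x)$ is already \emph{normal in the full arithmetic group} $\pi_1(X,\bar x)$, not merely in the geometric fundamental group. Hence your sentence ``the section $s_x$ does not in general normalize~$H$'' is false under the stated hypotheses: conjugation by $s_x(\sigma)$ fixes~$H$ for every~$\sigma$. All of the twisting, the passage to covers ``up to isomorphism'', and the bookkeeping you flag as ``the main obstacle'' evaporate; one may simply take $H' = H\cdot s_x(\Gal(\bark/k))\subseteq \pi_1(X,\bar x)$, which is a subgroup because~$H$ is normal, and let~$Y$ be the corresponding connected finite \'etale cover of~$X$. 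The conditions $H'\cap\pi_1(X\otimes_k\bark,\bar x)=H$ and $s_x(\Gal(\bark/k))\subseteq H'$ are then immediate and give, respectively, $Y\otimes_k\bark\simeq\barY$ and a $k$\nobreakdash-point of~$Y$ over~$x$.

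The paper's proof is the same argument expressed without~$\pi_1$. It works directly with the finite group $G=\Aut(\barY/X)$, which sits in an exact sequence $1\to N\to G\to\Gal(\bark/k)\to 1$ with $N=\Aut(\barY/X\otimes_k\bark)$; since~$N$ acts simply transitively on the fibre $\pibar^{-1}(x)$, the stabiliser of any chosen point $y\in\pibar^{-1}(x)$ is a complement to~$N$ in~$G$. Taking~$Y$ to be the normalisation of~$X$ in the fixed field of this stabiliser gives the desired cover in one line. Your section~$s_x$ and the paper's stabiliser are the same object: the image of $s_x(\Gal(\bark/k))$ in $G=\pi_1(X,\bar x)/H$ is precisely the stabiliser of the marked point~$\bar y$. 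So the two proofs coincide, but the paper's formulation is shorter and avoids the descent formalism entirely.
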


By ``Galois over~$X$'' we mean that the function field extension $\bark(\barY)/k(X)$ is Galois.

\begin{proof}
Let $\pibar:\barY\to X\otimes_k \bark$ denote the structure morphism. Fix $y \in \pibar^{-1}(x)$.
The group $G=\Aut(\barY/X)$ acts on the finite set $\pibar^{-1}(x)$ since $x \in X(k)$.
Let~$H$ denote the stabiliser of~$y$ and $\pi:Y\to X$ the normalisation of~$X$ in the field~$\bark(\barY)^H$.
As the action of $N=\Aut(\barY/X\otimes_k \bark)$ on $\pibar^{-1}(x)$ is simply transitive, we have $N \cap H=1$ and $G=NH$.
It follows that $Y\otimes_k\bark$ and~$\barY$ are canonically isomorphic over $X \otimes_k \bark$.
As~$y$ is fixed by $H=G/N=\Gal(\bark/k)$, its image in~$Y$ is a rational point (above~$x$).
\end{proof}

\newcommand{\citepoonen}{Poonen~\cite{pooneninsufficiency}\footnote{The definition given
in~\cite{pooneninsufficiency} allows non-connected covers but involves twists of torsors under finite non-abelian groups (as in~\cite{harskononab}). It can be shown to be equivalent to the definition given here by slightly adapting
the arguments used in the proofs of~\cite[Prop.~3 and Prop.~7]{hararistix}.}}
\begin{defn}[\citepoonen]
\label{defn:etbr}
The \emph{étale Brauer--Manin set} of~$X$ is the set
\begin{align*}
X(\A_k)^{\et,\Br} = \bigcap_{\barY}\bigcup_{\vphantom{\barY\bark}\pi:Y\to X} \pi\big(Y(\A_k)^{\Br(Y)}\big)\rlap{\text{,}}
\end{align*}
where~$\barY$ runs over the irreducible finite étale covers of $X \otimes_k\bark$
that are Galois over~$X$,
where $\pi:Y\to X$ runs over the morphisms
such that $Y\otimes_k\bark$ and $\barY$ are isomorphic
over~$X \otimes_k \bark$,
and where the unions and the intersections are taken inside~$X(\A_k)$.
\end{defn}

The sequence~\eqref{eq:seqinclusions} can now be refined to the following sequence of
inclusions:
\begin{align}
\label{eq:seqinclusionset}
X(k)\subseteq X(\A_k)^{\et,\Br} \subseteq  X(\A_k)^{\Br(X)} \subseteq X(\A_k)\rlap{\text{.}}
\end{align}
The first inclusion holds by
Proposition~\ref{prop:twist};
the second one is clear
(consider the identity map $\pi:X\to X$).
We note, in addition, that $X(\A_k)^{\et,\Br}$ is a closed subset of $X(\A_k)^{\Br(X)}$.
Indeed, in the definition of the étale Brauer--Manin set,
it can be shown that for a given~$\barY$,
there are only finitely many $\pi:Y\to X$ such that $Y(\A_k)\neq\emptyset$
(see \cite[Prop.~4.4]{harskononab};
this relies on Hermite's finiteness theorem and, in a critical manner, on the properness of~$X$).

The bielliptic surface from~\cite{skorobeyond}
satisfies
$X(\A_k)^{\et,\Br}=\emptyset$
while $X(\A_k)^{\Br(X)}\neq\emptyset$.
Many other examples of non-simply connected, smooth, proper varieties are known for which
the étale Brauer--Manin set is strictly smaller than
the Brauer--Manin set
(in particular, rational points are dense neither in the Brauer--Manin set
nor in the modified Brauer--Manin set);
see \cite[\textsection\textsection5--6]{harariweakapprox} (combined with \cite{demarcheetale}),
\cite{basilesko},
\cite[\textsection3.3]{harskoenriques},
\cite{vavenriques},
\cite{bbmpvenriques}.
Of course, if the variety~$X$ is simply connected,
then $X(\A_k)^{\et,\Br}=X(\A_k)^{\Br(X)}$.

\subsection{Further obstructions}
\label{subsec:furtherobstructions}

Poonen~\cite{pooneninsufficiency} gave the first example of a smooth and proper
variety~$X$ over~$k$ such that $X(k)=\emptyset$ but
$X(\A_k)^{\et,\Br}\neq\emptyset$.
The idea is quite simple and may be summarised as follows.
Let~$C$ be a smooth, proper, geometrically irreducible curve over~$\Q$ with a unique rational point $c \in C(\Q)$.
Poonen constructs a smooth, projective threefold~$X$ and a morphism $f:X \to C$
whose fibre above~$c$ is the (smooth) surface
of Example~\ref{ex:isko}
(a surface such that $X_c(\A_\Q)\neq\emptyset$ but $X_c(\Q)=\emptyset$).
It follows, already, that $X(\Q)=\emptyset$.
The morphism~$f$ is defined as the composition
of a conic bundle $X \to \P^1_\Q \times C$, smooth over the complement of a smooth curve dominating
both~$\P^1_\Q$ and~$C$,
with the
second projection $\P^1_\Q \times C\to C$.
By a standard computation explained
in~\cite[\textsection5]{pooneninsufficiency}
or in~\cite[\textsection2]{ctsolides},
this description implies
that
for any finite étale $D \to C$, the map $f^*:\Br(D) \to \Br(X \times_C D)$
is surjective.
To prove that
$X(\A_\Q)^{\et,\Br}\neq\emptyset$,
we now check the inclusion $X_c(\A_\Q)\subseteq X(\A_\Q)^{\et,\Br}$.
An irreducible finite étale cover $\barY \to X\otimes_k\bark$ that is Galois over~$X$
 comes, by base change,
from some $\barD \to C \otimes_k\bark$ that is
Galois over~$C$,
as the fibres of $X \to \P^1_\Q \times C$ are simply connected.
By Proposition~\ref{prop:twist},
it even comes from a cover
$D \to C$ such that~$c$
lifts to $d \in D(\Q)$.
Letting $Y=X \times_C D$,
the surjectivity of
$f^*:\Br(D) \to \Br(Y)$
and the projection formula
show that $X_c(\A_\Q)=Y_{d}(\A_\Q) \subseteq Y(\A_\Q)^{\Br(Y)}$, hence the claim.

Thus, the étale Brauer--Manin set is not fine enough to explain why an arbitrary smooth and proper variety
over~$k$ can be devoid of rational points.
This was to be expected.
Indeed,
by Remark~\ref{rmks:conjct}~(v) and the weak Lefschetz theorem,
if $X\subset \P^n_k$ is a smooth complete intersection of dimension at least~$3$,
then $X(\A_k)^{\et,\Br}=X(\A_k)$;
on the other hand, conjectures
of Bombieri and Lang predict that as soon as~$X$ is of general type,
rational points should not be dense for the Zariski topology,
\emph{a fortiori} not dense in~$X(\A_k)$.
Based on the work of Masuda and Noguchi~\cite{masudanoguchi},
Sarnak and Wang~\cite{sarnakwang} have shown that these conjectures even imply the existence
of smooth hypersurfaces $X \subset \P^4_\Q$ (of degree~$1130$)
such that $X(\A_\Q)\neq\emptyset$ and $X(\Q)=\emptyset$.

It would be extremely interesting to formulate a computable necessary
condition for the existence of a rational point
on smooth hypersurfaces $X\subset \P^n_k$ of dimension at least~$3$,
not implied by the existence of an adelic point.
Grothendieck's birational section conjecture~\cite{gtof}
provides a condition which is necessary, and conjecturally sufficient, but hardly computable:
for a rational point to exist,
the projection of absolute Galois groups $G_{k(X)}\to G_k$ must admit a
continuous homomorphic section
(see~\cite[Lemma~3.1]{ewbirab}).
The existence of such a section does imply, however,
the existence of an adelic point
(see~\cite[\textsection2.2]{koenigsmannsection};
more generally, for any smooth, proper and irreducible variety~$X$ over~$k$,
the existence of such a section can be shown to imply that $X(\A_k)^{\et,\Br}\neq\emptyset$,
see \cite[\textsection\textsection11.6--11.7]{stixbook} and the proof of~\cite[Th.~15]{hararistix}).
It remains a challenge to exhibit a smooth hypersurface $X \subset \P^n_k$ of dimension at least~$3$
such that $X(\A_k)\neq\emptyset$ and such that
the projection $G_{k(X)}\to G_k$ does not admit a continuous homomorphic section.

For arbitrary smooth and proper varieties~$X$ over~$k$, two further theories, leading to two
subsets $X(\A_k)^{\mathrm{desc}}$
and $X(\A_k)^h$ of $X(\A_k)^{\Br(X)}$ containing~$X(k)$, have been investigated:
non-abelian descent, by Harari and Skorobogatov (see~\cite{harariweakapprox},
\cite{harskononab},
\cite{hararinonabelian}),
and the étale homotopy obstruction, by Harpaz and Schlank (see~\cite{harpazschlank}).
In the end, however, it turned out
that $X(\A_k)^{\mathrm{desc}}=X(\A_k)^h=X(\A_k)^{\et,\Br}$ always
(see~\cite{demarcheetale},
\cite{skoroequivalent},
and
\cite[Th.~9.136]{harpazschlank}).

Poonen's ideas
from~\cite{pooneninsufficiency}
have recently been amplified
to yield more examples of smooth and proper varieties~$X$ over~$k$
such that $X(k)=\emptyset$ but $X(\A_k)^{\et,\Br}\neq\emptyset$.
Examples are now known among surfaces
(over any number field; see~\cite{harpazskorobogatovetbr}),
among conic bundle surfaces over an elliptic curve (over a real quadratic field;
see~\cite{cps}),
among varieties with trivial Albanese variety
(see~\cite{smeetsinsufficiency}), and, if the $abc$ conjecture is true,
even among simply connected varieties (\emph{op.\ cit.}, \textsection4).

\subsection{\texorpdfstring{$R$\nobreakdash-equivalence}{R-equivalence} of rational points}
\label{subsec:requivalence}

Following Manin~\cite{manincubicforms},
we say that
two rational points $x,y \in X(k)$
of a proper variety~$X$ over~$k$
are \emph{directly $R$\nobreakdash-equivalent} if there exists a map $\phi:\P^1_k\to X$ such that $\phi(0)=x$, $\phi(\infty)=y$ and we define \emph{$R$\nobreakdash-equivalence} as the equivalence relation on~$X(k)$ generated by direct $R$\nobreakdash-equivalence.
The set of $R$\nobreakdash-equivalence classes is denoted $X(k)/R$.

An important open question is whether
 $X(k)/R$ is finite
when~$X$ is a smooth and proper rationally connected variety over a number field~$k$.
This question is discussed in~\cite[\textsection10]{ctdegenerescences},
to which we refer for a list of known results.
According to Kollár~\cite{kolllocalfield} and to Kollár and Szabó~\cite{kollarszabo},
for such~$X$,
the set
$X(k_v)/R$ is finite for every place~$v$ of~$k$
and has cardinality~$1$ for all but finitely many~$v$, so that $\prod_{v \in\Omega} X(k_v)/R$
is finite. However, the natural map $X(k)/R\to \prod_{v \in\Omega}X(k_v)/R$
need not be injective (see~\cite[\textsection{}V.2]{sansucapropos})
and it is not known how to control its fibres in general.
We note that
if $X(k)\neq\emptyset$,
then
 Conjecture~\ref{conj:ctrc}
together with
the finiteness of $X(k)/R$
would imply,
by Remark~\ref{rmks:conjct}~(iii), 
that~$X$ contains a very large number of rational curves defined over~$k$.
It is not known whether any smooth and proper
rationally connected variety~$X$ over~$k$
with $X(k)\neq\emptyset$
contains even one such curve.

\subsection{Beyond rationally connected varieties, but not too far beyond}
\label{subsec:nonrc}

As we have explained in~\textsection\ref{subsec:bmo}
and in~\textsection\ref{subsec:furtherobstructions},
although existence and approximation properties of rational points are conjectured
to be determined by
the Brauer--Manin set
on rationally connected varieties,
it is clear that
for simply connected varieties of general type,
 other mechanisms must come
into play.
What happens in between?

\subsubsection{Curves}

Suppose~$X$ is a (smooth, proper, geometrically irreducible) curve of genus~$g$
over~$k$.  When $g=0$, the anti-canonical embedding presents~$X$ as a conic;
we have $\Br(X)=\Br_0(X)$,
and~$X(k)$ is dense in $X(\A_k)^{\Br(X)}=X(\A_k)$
by the Hasse--Minkowski theorem.
Using the work of Cassels on elliptic curves,
Manin~\cite[\textsection6]{maninicm} showed that $X(\A_k)^{\Br(X)}\neq\emptyset$ implies $X(k)\neq\emptyset$
if
 $g=1$
and
 the Tate--Shafarevich group of the Jacobian of~$X$
is finite (a widely believed conjecture).
A number of examples in higher genus led Scharaschkin and Skorobogatov to suggest that this
implication might in fact hold for all smooth and proper curves
(see~\cite[\textsection6.2]{skobook}).
When~$g\geq 1$, the set~$X(k)$ can be simultaneously finite and non-empty, in which case
it cannot be dense in~$X(\A_k)^{\Br(X)}$
(see Remark~\ref{rmks:conjct}~(iii)).
Nevertheless, conjectures of Poonen~\cite{poonenheuristics}
and Stoll~\cite{stoll},
backed by
a probabilistic heuristic
and extensive numerical evidence (see~\cite{stollrationalpointsoncurves}),
predict that~$X(k)$ should be dense in~$X(\A_k)_\poonen^{\Br(X)}$ for any smooth and proper curve~$X$ over~$k$.
For curves over a global field of positive characteristic,
Poonen and Voloch~\cite{poonenvolochbmff}
have shown, under a very weak technical hypothesis on the
Jacobian,
that $X(k)$ is dense in $X(\A_k)^{\Br(X)}$
(see also~\cite{hararibourbakipoonenvoloch}).

\subsubsection{Abelian varieties}

If~$X$ is a principal homogeneous space of an abelian variety over~$k$
whose Tate--Shafarevich group is assumed to be finite,
the set~$X(k)$ is dense in~$X(\A_k)_\poonen^{\Br(X)}$ (see~\cite{wang}, \cite{harariconnected}).  A rather surprising by-product of the proof is the equality $X(\A_k)^{\Br(X)}=X(\A_k)^{\Br_1(X)}$ for such~$X$
 (see Remark~\ref{rmks:conjct}~(vii)). If~$X$ is an abelian variety of dimension at least~$2$,
very often $\Br(X)\neq \Br_1(X)$.  Curves, on the other hand,
satisfy $\Br(X)=\Br_1(X)$, in view of Tsen's theorem.

\subsubsection{$K3$ surfaces}
\label{subsubsec:k3surfaces}

A number of conditional positive results for certain elliptic~$K3$ surfaces
(see~\textsection\ref{subsubsec:pencilsofabvar} below)
have led to the idea that the set~$X(k)$ might always be dense
in $X(\A_k)^{\Br(X)}$,
or perhaps in $X(\A_k)^{\Br(X)}_\bullet$, when~$X$ is a~$K3$ surface.
Evidence is scarce:
when non-empty, the set~$X(k)$ is not known
to be dense in $X(\A_k)^{\Br(X)}_\bullet$
for a single~$K3$ surface~$X$ over a number field~$k$.
Carrying out numerical experiments is rendered difficult by two phenomena.
First, rational points of~$K3$ surfaces tend to have large height rather quickly
(see~\cite{vanluijkslides} for a discussion of the conjectural growth of the number of rational
points
of bounded height on~$K3$ surfaces) and secondly, for~$K3$ surfaces, and more generally
for varieties with $H^2(X,\sO_X)\neq 0$,
it is a particularly challenging task, in practice,
to make the transcendental elements
of~$\Br(X)$ explicit,
or even to simply bound the quotient $\Br(X)/\Br_1(X)$
when it is known to be finite (see Remark~\ref{rmks:conjct}~(ii)).
See however~\cite{elsjahneltransc}
for a partial experiment with Kummer surfaces over~$\Q$.

Skorobogatov and Zarhin~\cite{skozarhinfiniteness}
have proved the finiteness of $\Br(X)/\Br_0(X)$
for all~$K3$ surfaces~$X$ over~$k$;
this finiteness was not previously known in any example.
So far, this quotient has been computed, or bounded, only for certain Kummer surfaces,
see~\cite{skorobogatovzarhinkummer},
\cite{ieronymouskorobogatovzarhin},
\cite{ieronymouskorobogatov}.
In addition, Hassett, Kresch and Tschinkel~\cite{hassettkreschtschinkel} have given
an algorithm which takes, as input,
a~$K3$ surface~$X$ of degree~$2$ (\emph{i.e.}, a double
cover of~$\P^2_k$ ramified along a sextic curve), and outputs a bound for the order
of~$\Br(X)/\Br_0(X)$. In its present form, however, it is not practical.

Kresch and Tschinkel~\cite{kreschtschinkelsurfaces}
have shown that
whenever~$X$ is a~$K3$ surface for which a bound on the order of $\Br(X)/\Br_0(X)$ is known,
the set
 $X(\A_k)^{\Br(X)}$
can be computed algorithmically.
By~\cite{ieronymouskorobogatovzarhin},
this applies, in particular,
to diagonal quartic surfaces over~$\Q$, \emph{i.e.}, surfaces in~$\P^3_\Q$ defined by an equation of the form $\sum a_ix_i^4=0$
for $a_0,\dots,a_3\in \Q^*$.
It nevertheless remains a challenge to implement, on an actual computer, an algorithm which
takes, as input, such an equation, and outputs yes or no according
to whether the set $X(\A_\Q)^{\Br(X)}$ is empty.
Here,
the difficulty does not lie in bounding the group $\Br(X)/\Br_0(X)$,
whose order always divides $2^{25} \cdot 15$
(see~\cite{ieronymouskorobogatovzarhin}
and~\cite{ieronymouskorobogatov}), but in representing
its elements by central simple algebras over~$\Q(X)$.
The analogous problem for $X(\A_\Q)^{\Br_1(X)}$ was solved by Bright~\cite{brightdiagonalalgebraic}.  It is known that transcendental elements cannot be ignored in this context: there exist diagonal
quartic surfaces~$X$ over~$\Q$ such that $X(\A_\Q)^{\Br(X)}\neq X(\A_\Q)^{\Br_1(X)}$
(see~\cite{ieronymou},
\cite{preu},
\cite{ieronymouskorobogatov}).
There also exist~$K3$
surfaces~$X$ over~$\Q$
such that
\begin{align}
\emptyset=X(\Q)=X(\A_\Q)^{\Br(X)}
\subset X(\A_\Q)^{\Br_1(X)}=X(\A_\Q)\neq\emptyset\rlap{\text{,}}
\end{align}
even among~$K3$ surfaces of degree~$2$ with geometric Picard number~$1$
(see~\cite{hassettvarilly},
which rests on~\cite{vanluijk} and
\cite{ejpicardreduction}
to prove that the geometric Picard number is~$1$).

A conjecture of Campana
\cite[\textsection4.3]{campanaoverview}
predicts
Zariski density of rational points,
on~$K3$
surfaces over number fields,
after a finite extension of the ground field.
For some~$K3$ surfaces of geometric Picard number at least~$2$,
this is known (see~\cite{hassettpotentialdensity}, which also
discusses,
in~\textsection11, surfaces of geometric Picard number~$1$).
In addition, density of~$X(k)$ in~$X(k_v)$ for a fixed $v \in \Omega$ is known
for some~$K3$ surfaces admitting two elliptic fibrations; see~\cite{loganmckinnonvanluijk},
\cite{swinnertondyerdensity}, \cite{pannekoek}.

\subsection{Zero-cycles}
\label{subsec:zerocycles}

The definitions and questions of~\textsection\ref{subsec:bmo} still make sense if rational points are replaced with zero-cycles of a given degree.
If~$P$ is a closed point of~$X$, we denote by~$k(P)$ its residue field; it is a finite extension of~$k$, equal to~$k$ if and only if~$P$ is rational.
Recall that a \emph{zero-cycle} on~$X$
is a formal $\Z$\nobreakdash-linear combination of closed points and that the degree of $z=\sum n_i P_i$
is by definition $\deg(z)=\sum n_i [k(P_i):k]$.
Thus, any rational point is a zero-cycle of degree~$1$, and a zero-cycle of degree~$1$ exists if and only if the gcd of the degrees of the closed points of~$X$ is equal to~$1$,
 a condition weaker than the existence of a rational point.
The map $\Br(X) \times X(\A_k) \to \Q/\Z$, $(\alpha,P_v)\mapsto\sum_{v\in\Omega}\inv_v\mkern2.5mu\alpha(P_v)$
considered in Definition~\ref{def:bmset} extends to a canonical pairing
\begin{align}
\label{eq:globalpairing}
\Br(X) \times \prod_{v\in\Omega}\CH_0(X\otimes_k k_v) \to \Q/\Z\rlap{\text{,}}
\end{align}
where~$\CH_0$ denotes the Chow group of zero-cycles up to rational equivalence.
By the global reciprocity law, the image of the diagonal map
$\CH_0(X) \to \prod_{v\in\Omega}\CH_0(X\otimes_k k_v)$ is contained in the right
kernel of~\eqref{eq:globalpairing}; thus, a necessary condition for the existence of a zero-cycle of degree~$1$ on~$X$ is the existence of a collection of zero-cycles $(z_v)_{v\in\Omega}$ in the right kernel of~\eqref{eq:globalpairing} such that $\deg(z_v)=1$ for all $v\in\Omega$.
When such a collection exists, we say that there is no \emph{Brauer--Manin obstruction to the existence of a zero-cycle of degree~$1$ on~$X$}.
This condition is implied by, but is in general weaker than, the condition $X(\A_k)^{\Br(X)}\neq\emptyset$.

\newcommand{\citeconjksct}{Kato, Saito~\cite[\textsection7]{katosaitocontemp}, Colliot-Thélène~\cite[\textsection1]{ctbordeaux}}
\begin{conj}[\citeconjksct]
\label{conj:e1}
Let~$X$ be a smooth, proper, geometrically irreducible variety defined over a number field~$k$.
There exists a zero-cycle of degree~$1$ on~$X$
if there
is no Brauer--Manin obstruction to the existence
of a zero-cycle of degree~$1$ on~$X$.
\end{conj}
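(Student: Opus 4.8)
The plan is to argue by induction on $d=\dim X$, using the fibration method to descend from higher dimension to the case of curves. Since the Chow group $\CH_0$ of a smooth proper variety is a birational invariant, so are the pairing~\eqref{eq:globalpairing} and the obstruction it defines; hence $X$ may be replaced at will by a smooth proper model of any dense open subset, which leaves room to choose convenient fibrations below. For $d=0$ there is nothing to prove. For $d=1$, let $J=\Pic^0_{X/k}$ be the Jacobian of~$X$. A family $(z_v)_{v\in\Omega}$ of local zero-cycles of degree~$1$ with no Brauer--Manin obstruction gives, place by place, a $k_v$-point of the torsor $\Pic^1_{X/k}$ under~$J$, and the resulting adelic point lies in the Brauer--Manin set of $\Pic^1_{X/k}$. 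By Manin's argument for torsors under abelian varieties (see the discussion in~\textsection\ref{subsec:nonrc}), the finiteness of the Tate--Shafarevich group of~$J$ then forces $\Pic^1_{X/k}(k)\neq\emptyset$, which yields a zero-cycle of degree~$1$ on~$X$.

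For $d\ge 2$, I would fix a dominant morphism $f\colon X\to\P^1_k$ with geometrically irreducible generic fibre --- for instance the one coming from a Lefschetz pencil on a projective embedding, after blowing up its base locus --- and run the following four-step argument. Given an admissible family $(z_v)_{v\in\Omega}$ of local zero-cycles of degree~$1$ on~$X$: (i) push it forward to an admissible family on~$\P^1_k$, noting that a zero-cycle of degree~$1$ on~$\P^1_k$ exists unconditionally; (ii) deform each $z_v$, within the deformation class allowed by the local conditions, so that it becomes supported on the fibres $X_\theta$ over finitely many closed points $\theta\in\P^1_k$ lying in ``good position'' --- controlled residue fields, $f$ smooth over~$\theta$ --- by means of a moving lemma together with weak-approximation inputs, in the style of Colliot-Thélène, Skorobogatov and Swinnerton-Dyer; (iii) compare $\Br(X)$ with the Brauer groups of the generic fibre and of~$\P^1_k$ so as to transfer, via the global reciprocity law~\eqref{eq:sebr}, the orthogonality relation on~$X$ into the absence of a Brauer--Manin obstruction to a zero-cycle of degree~$1$ on each $X_\theta$; (iv) apply the inductive hypothesis to the fibres $X_\theta$ and splice the degree-$1$ zero-cycles thus obtained to a global zero-cycle of degree~$1$ on~$\P^1_k$ to produce one on~$X$.

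The hard part is carrying out steps (ii) and (iii) without any hypothesis on~$X$. The fibration method, as currently understood, requires rather precise control of how local conditions and Brauer classes propagate between the fibres and the total space; this control is available when the fibres are rationally connected, or when one can run a hyperplane-section induction (Harpaz--Wittenberg, Colliot-Thélène--Swinnerton-Dyer), but for an arbitrary smooth proper~$X$ the fibres of a fibration over a curve may be of general type and no simpler than~$X$ itself, so the induction gains nothing. For this reason the conjecture in the stated generality is genuinely open: a full proof would seem to demand a new idea, perhaps a cohomological or motivic reformulation of the zero-cycle Brauer--Manin obstruction in which the reciprocity input becomes transparent. What the scheme above does deliver, executed carefully, is the conjecture for varieties admitting a fibration over a curve with rationally connected fibres, and for iterated constructions built from such pieces --- which is consistent with the expectation, compatible with Conjecture~\ref{conj:ctrc}, that the Brauer--Manin obstruction should control zero-cycles on rationally connected varieties.
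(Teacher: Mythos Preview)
The statement you are attempting to prove is a \emph{conjecture} in the paper, not a theorem; the paper contains no proof of it and explicitly presents it as open. Indeed, you recognise this yourself midway through your proposal (``the conjecture in the stated generality is genuinely open''). There is therefore no proof in the paper to compare your approach against.

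A few comments on the content of your sketch are nonetheless in order. Your treatment of the case $d=1$ is not a proof either: it assumes the finiteness of the Tate--Shafarevich group of the Jacobian, which is itself conjectural. The paper states precisely this---that Conjecture~\ref{conj:e1} for curves is \emph{known to follow from} the finiteness of~$\mathcyr{Sh}(J)$, not that it is known unconditionally. So your base case is conditional, and your induction cannot get off the ground. Moreover, even granting the base case, your step~(iii) is where the real obstacle lies, as you correctly diagnose: without geometric hypotheses on the fibres (rational connectedness, or at least control of $\Br(X_\theta)/\Br_0(X_\theta)$ as $\theta$ varies), there is no mechanism to transport the orthogonality condition from~$X$ to the individual fibres. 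The fibration results the paper does prove (Theorem~\ref{th:mainfibrationzerocycles}) require exactly such hypotheses, and your closing paragraph accurately reflects the current state of knowledge.

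In short: your proposal is not a proof, and you know it is not a proof; it is a reasonable outline of why the known fibration techniques fall short of the general conjecture. That is a fair summary, but it should not be labelled a proof attempt.
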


The above conjecture, first formulated by Colliot-Thélène and Sansuc~\cite{ctsansuc}
in the
case of geometrically rational surfaces, makes no assumption on the geometry of~$X$.
This is in marked contrast with Conjecture~\ref{conj:ctrc}
and the expected behaviour of rational points on varieties of general type
(see~\textsection\ref{subsec:furtherobstructions}).

There is a natural strengthening of Conjecture~\ref{conj:e1},
referred to as Conjecture~(E),
which includes an approximation condition for
zero-cycles analogous to the density of~$X(k)$ in $X(\A_k)^{\Br(X)}_\poonen$ and which should
also hold regardless of the geometry of~$X$.
Its precise formulation can be found in~\cite[\textsection1.1]{wittdmj}, or, in an essentially equivalent form,
in~\cite[\textsection1]{ctbordeaux}.

For curves,
Conjectures~\ref{conj:e1} and~(E) are known to follow from the finiteness of the Tate--Shafarevich group
of the Jacobian
(see~\cite{saito},
\cite[\textsection3]{ctconj},
\cite{erikssonscharaschkin},
\cite[Remarques~1.1~(iv)]{wittdmj}).
For abelian varieties of dimension greater than~$1$, Conjecture~(E) is open.
For rationally connected varieties,
Liang~\cite{liangarithmetic}
has shown that if~$X(k')$ is dense in $X(\A_{k'})^{\Br(X\otimes_k k')}$ for all finite extensions~$k'/k$,
then~$X$ satisfies Conjectures~\ref{conj:e1} and~(E).
The proof is based on the fibration method and will be sketched in~\textsection\ref{subsubsec:zerocyclesandfibrations} below.

\subsection{What about integral points?}
\label{subsec:integralpoints}

It was recently recognised by Colliot-Thélène and Xu~\cite{ctxu},
in a foundational work on homogeneous spaces of semi-simple
algebraic groups,
that the concepts discussed in~\textsection\ref{subsec:bmo} are relevant not only for
the study of rational points,
but also for that of integral points.

Let us drop the hypothesis, introduced
at the beginning
of~\textsection\ref{sec:definitionsandquestions},
that~$X$ is proper.
Until the end of~\textsection\ref{subsec:integralpoints}, we only assume that~$X$ is a smooth,
geometrically irreducible, separated variety over~$k$.
Let~$\sX$ be a separated scheme of finite type over the ring of integers~$\sO_k$ of~$k$, with generic fibre~$X$.
For any finite place~$v$ of~$k$,
a $k_v$\nobreakdash-point of~$X$ is \emph{integral}
with respect to~$\sX$
if it belongs to $\sX(\sO_v)\subseteq X(k_v)$.
For any finite subset $S \subset \Omega$ containing~$\Omega_\infty$,
a rational point of~$X$ is $S$\nobreakdash-\emph{integral}
(or \emph{integral}, when $S=\Omega_\infty$)
if it is integral at every finite place not in~$S$,
or, equivalently, if it belongs
to the subset $\sX(\sO_{k,S})\subseteq X(k)$,
where $\sO_{k,S}$ denotes the ring of $S$\nobreakdash-integers of~$k$.
We set $\sX(\A_{k,S})=\prod_{v \in S} X(k_v) \times \prod_{v \in \Omega \setminus S} \sX(\sO_v)$.
The set of \emph{adelic points} of~$X$ is $X(\A_k) = \varinjlim \sX(\A_{k,S})$,
where the limit ranges over the finite subsets $S \subset\Omega$ containing~$\Omega_\infty$, ordered by inclusion.
This set does not depend on the choice of~$\sX$ since any two models of~$X$ become
isomorphic after inverting finitely many primes.
Following Weil~\cite{weiladeles}, we endow $\sX(\A_{k,S})$ with its natural product topology
and $X(\A_k)$ with the direct limit topology.
Usually,
this topology is strictly finer than the one induced by the inclusion
$X(\A_k) \subseteq \prod_{v \in \Omega}X(k_v)$;
the subsets
$\prod_{v \in S} U_v \times \prod_{v \in \Omega \setminus S} \sX(\sO_v)$,
for $S \subset \Omega$ finite containing~$\Omega_\infty$ and $U_v \subseteq X(k_v)$ arbitrary open subsets,
form a basis of open sets of~$X(\A_k)$.
We refer the reader to~\cite{conradadeles} and~\cite{lorscheidsalgado} for
a detailed discussion of this topology and of its relationship with Grothendieck's definition
of~$X(\A_k)$ as the set of $k$\nobreakdash-morphisms $\Spec(\A_k)\to X$.

In general, the study of integral points
is more difficult than that of rational points on the same variety,
although if~$X$ is proper, we can take~$\sX$ to be proper too, in which case rational
and integral points are the same.

As in the proper case, we have
a diagonal embedding $X(k) \subseteq X(\A_k)$
and a closed subset $X(\A_k)^{\Br(X)} \subseteq X(\A_k)$, defined exactly
as in~\textsection\ref{subsec:bmo}.
The global reciprocity law still implies
that $X(k) \subseteq X(\A_k)^{\Br(X)}$.
This inclusion now has implications for integral points on~$X$;
for instance,
it follows that if
 $S \subset \Omega$
is a finite subset containing~$\Omega_\infty$
such that $X(\A_k)^{\Br(X)} \cap \sX(\A_{k,S})=\emptyset$,
then $\sX(\sO_{k,S})=\emptyset$.

\begin{example}
There do not exist integers $x,y \in \Z$ such that $x^2-25y^2=-1$,
even though this equation admits a rational solution
as well as a solution in $\Z/n\Z$ for every $n \geq 1$
since $(x,y)=(0,\frac 1 5)$ and $(x,y)=(\frac 3 4, \frac 1 4)$ are solutions
in $\Z[\frac 1 5]$ and $\Z[\frac 1 2]$, respectively.
Indeed, let $\sX \subset \A^2_\Z$ be the closed subscheme defined by this equation
and $X=\sX\otimes_\Z\Q$.  One can check that the quaternion
algebra $(x-5y,5) \in \Br(\Q(X))$ extends (uniquely) to a class $\alpha\in\Br(X)$,
that $\alpha(P_v)=0$ for any $P_v \in \sX(\Z_v)$ and any prime number $v \neq 5$,
that $\alpha(P_\infty)=0$ for any $P_\infty \in X(\R)$ and that
$\inv_5\mkern2.5mu\alpha(P_5)=\frac{1}{2}$
for any $P_5 \in \sX(\Z_5)$.
Thus $X(\A_\Q)^{\Br(X)} \cap \sX(\A_{\Q,\Omega_\infty})=\emptyset$
and hence $\sX(\Z)=\emptyset$: there is a \emph{Brauer--Manin obstruction to the existence of
an integral point on~$\sX$}.

Of course, 
using the factorisation $x^2-25y^2=(x-5y)(x+5y)$
together with
the fact that~$-1$ and~$1$ are the only units of~$\Z$,
it is straightforward to check directly that integral solutions do not exist;
but the above argument, based on~$\alpha$, remains valid over
any number field~$k$ of odd degree in which~$5$ is inert,
so that $\sX(\sO_k)=\emptyset$ for any such~$k$, however unwieldy the group of units of~$\sO_k$ may be.
\end{example}

When the variety~$X$ is not proper, the set $X(k)$ is rarely dense in $X(\A_k)^{\Br(X)}$:
this density fails already for the affine line, since in this case $X(\A_k)^{\Br(X)}=X(\A_k)$
while~$k$ is never dense in~$\A_k$.
A more interesting property is obtained by ignoring approximation conditions at
a finite set $S \subset \Omega$ of places.

\begin{defn}
\label{def:densityoff}
If~$\pi$
denotes the projection $\pi:X(\A_k) \to \prod_{v\in\Omega\setminus S}X(k_v)$,
the \emph{adelic topology off~$S$} on $X(\A_k)$
is the topology
whose open subsets are the sets $\pi^{-1}(\pi(U))$ when~$U$ ranges over the (usual) open subsets
of $X(\A_k)$.  We refer to the density of an inclusion of subsets of~$X(\A_k)$
with respect to the induced topology as \emph{density off~$S$}.
\end{defn}

The density of~$X(k)$ in~$X(\A_k)$ off~$S$
is commonly referred to as the property of \emph{strong approximation off~$S$},
as opposed to \emph{weak approximation}, which refers to the density of~$X(k)$ in $\prod_{v \in \Omega}X(k_v)$.
The Chinese remainder theorem essentially states that the affine line over~$\Q$ satisfies
strong approximation off the real place.  In fact, affine space of any dimension over any number field
satisfies strong approximation off any non-empty finite set of places.
For more general varieties, the Brauer--Manin obstruction can again play a role:
if $X(\A_k)^{\Br(X)}$ is not dense in~$X(\A_k)$ off~$S$, then strong approximation off~$S$ must fail.

\newcommand{\citectw}{\cite[Exemple~5.6]{ctw}}
\begin{example}[\citectw]
There do not exist integers $x,y,z\in\Z$ such that $x^3+y^3+2z^3=2$
and such that
$\congru{x+y}{2}{8}$ and $\congru{z}{2}{4}$,
even though such $x,y,z$ can be found in~$\Z_2$.
(In particular, strong approximation off the real place fails.)
Indeed, letting $\sX \subset \A^3_\Z$ denote the closed subscheme defined by
 $x^3+y^3+2z^3=2$
and $X=\sX\otimes_\Z\Q$, one can check that the quaternion algebra
$(2(x+y+2z), -3(x+y+2z)(x+y)) \in \Br(\Q(X))$ extends (uniquely) to a class
$\alpha\in\Br(X)$,
that $\alpha(P_v)=0$ for any $P_v \in \sX(\Z_v)$ and any prime number $v \neq 2$,
that $\alpha(P_\infty)=0$ for any $P_\infty \in X(\R)$, that
$\inv_2\mkern2.5mu\alpha(P_2)=\frac{1}{2}$
for any $P_2 \in \sX(\Z_2)$ which satisfies the two
congruence conditions
$\congru{x+y}{2}{8}$ and $\congru{z}{2}{4}$,
and that such~$P_2$ exist.
If we fix such a~$P_2$ and let $P_v=(0,0,1)$ for $v \in \Omega\setminus\{2\}$,
we obtain an element of $X(\A_\Q)$
which does not belong to
the closure
of $X(\A_\Q)^{\Br(X)}$
off~$S=\{\infty\}$.
This is a \emph{Brauer--Manin obstruction to strong approximation on~$X$ off the real place}.
\end{example}

The reader will find many more examples of Brauer--Manin obstructions to the existence of integral
points, or to strong approximation, in \cite[\textsection8]{ctxu}, \cite{kreschtschinkeltwoexamples},
\cite{ctw},
\cite{weixu},
\cite{weixumult},
\cite{gundlach}, \cite[\textsection7]{ctxu2},
\cite{weisumoftwosquares},
\cite[\textsection\textsection6--8]{jahnelschindler}.

Now that the various concepts relating integral points, strong approximation and the Brauer--Manin set
have been described and that examples have been given, we cannot but observe that
even just formulating a sound analogue of Conjecture~\ref{conj:ctrc}
for non-proper varieties remains a delicate task, not satisfactorily solved to this day.
In contrast with what happens for rational points, it is very easy to give
examples of finite type separated schemes over~$\Z$ which
have
neither an integral point nor a Brauer--Manin obstruction to
the existence of an integral point.  Such an example is the closed subscheme $\sX \subset \A^3_\Z$
defined by $2x^2+3y^2+4z^2=1$.
The non-existence of
integral points on~$\sX$
is a triviality;
on the other hand, it is proved in~\cite[Exemple~5.9]{ctw} that there is no Brauer--Manin obstruction to the existence of an integral point on~$\sX$.
What goes wrong in this example is that~$X(\R)$ is compact,
thus forcing the finiteness of the subset~$\sX(\Z)$.
Subtler examples of a similar type, based on archimedean phenomena,
can be found
in~\cite[\textsection4.2]{harpazcurious}
(no integral point; this example is also discussed in~\cite[\textsection2]{jahnelschindler})
and
in~\cite[\textsection6]{derenthalwei}
and~\cite[\textsection8]{jahnelschindler}
(defect of strong approximation off the real place).

In addition, the phenomena discussed 
in~\textsection\ref{subsec:etalecovers}
and~\textsection\ref{subsec:furtherobstructions} for rational points
do have their analogues for integral points.

The influence of étale covers on integral points is illustrated, over~$\Z$, by~\cite[Exemple~5.10]{ctw} and
\cite[Example~1.6]{weitorus}
(no integral point)
and by~\cite[p.~420]{hararivolochcurves} (failure of strong approximation off the real place, for a specific adelic point).
In these three examples,
the defect
is due to an étale Brauer--Manin obstruction and is not explained by the Brauer--Manin obstruction.

Poonen's use, in~\cite{pooneninsufficiency}, of a morphism to a curve with only finitely many rational points,
which we summarised in~\textsection\ref{subsec:furtherobstructions},
can also be adapted to the integral setting:
Cao and Xu~\cite[Example~5.2]{caoxu} exploit the projection
$X=(\A^1_\Q \times \Gm)\setminus \{(0,1)\} \to \Gm$
and the finiteness of $\Gm(\Z)=\{-1,1\}$
to exhibit,
 for certain adelic points on~$X$
lying in the fibre above~$1$,
 a failure of strong approximation off the real place which cannot be explained by a Brauer--Manin obstruction on~$X$ or by its étale variant.

The example we have just mentioned shows that
unlike the fundamental group or the Brauer group,
 qualitative properties of integral points
 can be sensitive to codimension~$2$ subsets;
indeed, it follows from Poitou--Tate duality that
the Brauer--Manin obstruction fully accounts for the defect of
strong approximation off the real place
 on the surface $\A^1_\Q \times \Gm$.
The following question, however, is open.
It demonstrates a second type of difficulties that are inherent to integral points.

\begin{question}
\label{q:codimdeux}
Let $S\subset \Omega$ be a finite subset.
Let~$X$ be a smooth and separated variety which satisfies strong approximation off~$S$.
If $Z \subset X$ denotes a closed subset of codimension at least~$2$,
does $X \setminus Z$
satisfy strong approximation off~$S$?
\end{question}

It was observed by Wei~\cite[Lemma~1.1]{weitorus} and by Cao and
Xu~\cite[Prop.~3.6]{caoxu} that Question~\ref{q:codimdeux} has an
affirmative answer when $X=\A^n_k$ or $X=\P^n_k$.

A very similar question for potential Zariski density of integral points was
raised and discussed by Hassett and
Tschinkel~\cite[Problem~2.13]{hassetttschinkelintegral}.  (``Potential''
means that we are considering $S$\nobreakdash-integral points where both an
enlargement of~$S$ and a finite extension of~$k$ are allowed.)

Despite the pitfalls related to archimedean places and to codimension~$2$ subsets,
significant progress was achieved over the last few years
in the study of strong approximation on various types of varieties.
This is still a burgeoning area of research;
instead of discussing it in depth,
we merely indicate some of its developments as we go along
(see~\textsection\ref{intpoints:galois}, \textsection\ref{intpoints:descent}, \textsection\ref{intpoints:fibration}).
In addition to the references given there, the reader may consult 
\cite{hararivolochcurves},
\cite{gundlach},
\cite{liuxu},
\cite{xufibrations}.

\section{Methods for rational and rationally connected varieties}
\label{sec:methods}

General methods that have been successful at solving particular
cases of the questions raised in~\textsection\ref{sec:definitionsandquestions}
for rationally connected varieties over number fields
fall under four headings:
analytic methods,
Galois cohomological methods,
the descent method,
and the fibration method.

Over global fields of positive characteristic,
one can also approach these questions
through geometry over finite fields,
as the recent work of Tian~\cite{tian} illustrates;
see also~\cite{hassetttschinkelquadricfibrations}.
For lack of space, we focus on number fields in these notes.

In the following sections
we review the four methods listed above, with an emphasis on
examples which showcase interplay between two or more of them.

\subsection{Analytic methods}

By ``analytic methods'', we mean the Hardy--Littlewood circle method, sieve methods,
and the recent developments in additive combinatorics due to Green, Tao and Ziegler.
(See~\cite{davenportbook}, \cite[\textsection16]{bakerbook}, \cite{zieglersurvey} for introductions to these topics.)
Instead of discussing their technical details,
we quote below a few results
which are meant to be representative of the range of applicability of these methods.
As the reader will observe,
this range is limited.
When analytic methods apply, however,
they yield results of a quantitative nature, more precise
than the existence or density statements discussed in this report;
in addition, even the qualitative implications of these results are often completely unapproachable by
the known algebro-geometric
methods. Such is the case, for instance, for Theorem~\ref{th:bhb} below with $\deg(X)\geq 4$ or 
for Example~\ref{ex:bms} below with $r\gg 0$.

\subsubsection{The circle method}

Rational points of a hypersurface in a projective space are amenable to analysis by the circle method
when the degree
of the hypersurface is small enough compared to its dimension.
Over~$\Q$, the following theorem was proved by
 Browning and Heath--Brown~\cite{bhb},
generalising earlier work of Birch~\cite{birch} which covered, in particular, the case of hypersurfaces.
The extension to number fields is due
to Skinner~\cite{skinner} and to
Frei and Madritsch~\cite{freimadritsch}.

\begin{thm}
\label{th:bhb}
Let $n\geq 1$.
Let~$k$ be a number field.
If $X \subset \P^n_k$ is a smooth and geometrically irreducible subvariety such that
\begin{align}
\label{eq:bounddimdeg}
\dim(X) \geq \left(\deg(X)-1\right)2^{\deg(X)}-1\rlap{\text{,}}
\end{align}
then~$X(k)$ is dense in $X(\A_k)$.
\end{thm}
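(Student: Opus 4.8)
The plan is to reduce the statement to the classical circle method estimates of Birch and their refinement by Browning--Heath--Brown, and then to handle weak approximation by a standard completion-of-the-square argument together with an auxiliary fibration. The inequality~\eqref{eq:bounddimdeg} is precisely the numerical hypothesis under which the singular series and singular integral in the circle method are shown to converge and to be positive, so the heart of the matter is already contained in the cited works; the task here is chiefly to package that input correctly and to promote a density-of-rational-points statement to density in the full adelic space~$X(\A_k)$.

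First I would treat the case where~$X$ is a hypersurface $X=\{F=0\}\subset\P^n_k$, since the general case of a smooth geometrically irreducible complete intersection can be deduced by Birch's method of intersecting with generic linear subspaces (or, following Browning--Heath--Brown, by working directly with systems of forms, their condition being designed exactly so that the $\dim$--versus--$\deg$ bookkeeping goes through for intersections). Over~$k=\Q$, Birch's theorem~\cite{birch} gives an asymptotic formula for the number of integral zeros of~$F$ of bounded height in any fixed box, with main term a product of local densities, valid once~\eqref{eq:bounddimdeg} holds; positivity of each $p$-adic density and of the real density on a given open set translates into the existence of integral solutions approximating any prescribed finite collection of local solutions. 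To pass to a number field~$k$ one restricts scalars: choosing a $\Z$-basis of~$\sO_k$ turns the equation $F=0$ over~$k$ into a system over~$\Q$ of the same degree and of dimension multiplied by $[k:\Q]$, and one checks that Birch's rank condition is inherited (this is exactly the content of Skinner~\cite{skinner} and Frei--Madritsch~\cite{freimadritsch}, whom I would simply cite).

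Next I would explain how to get density in $X(\A_k)$ rather than merely nonemptiness. Fix a finite set $S\subset\Omega$ and, for each $v\in S$, a local point $P_v\in X(k_v)$ together with a $v$-adic neighbourhood; we want a rational point in all these neighbourhoods simultaneously. Since~$X$ is smooth, each $P_v$ lies on a smooth $k_v$-point, and by the implicit function theorem the neighbourhood contains a $v$-adically open subset of the smooth locus. The circle method, in the quantitative form above, produces an asymptotic count of rational points whose leading term is a positive multiple of a product of $v$-adic volumes; restricting the archimedean integration to the chosen real/complex neighbourhoods and restricting the count to rational points congruent to~$P_v$ modulo a high power of the maximal ideal for the finite $v\in S$ (a congruence condition, which only multiplies the singular series by a harmless positive Euler factor) yields a count that still tends to infinity. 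Hence rational points exist in $\prod_{v\in S}U_v$, and since $X$ is proper we have $X(\A_k)=\prod_v X(k_v)$ so this is exactly density of $X(k)$ in $X(\A_k)$.

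The main obstacle is not any single step but the verification that Birch's singular-locus condition (equivalently, the rank-of-the-Hessian-type hypothesis underlying the minor-arcs estimate) is implied by the purely dimensional inequality~\eqref{eq:bounddimdeg} under the smoothness assumption on~$X$, and that this survives restriction of scalars and the imposition of congruence conditions. For a smooth hypersurface the Birch singular locus is empty, so this is immediate; for smooth complete intersections one invokes the Browning--Heath--Brown analysis~\cite{bhb}, where~\eqref{eq:bounddimdeg} is exactly the threshold at which their induction on the number of variables closes. I would therefore structure the proof as: (1)~reduce to $k=\Q$ by restriction of scalars, citing~\cite{skinner},~\cite{freimadritsch}; (2)~reduce weak approximation to an asymptotic count with congruence and archimedean-box constraints; (3)~invoke~\cite{bhb} (resp.~\cite{birch} in the hypersurface case) to supply that count, noting that~\eqref{eq:bounddimdeg} guarantees convergence and positivity of the singular series and singular integral, with the local factors at $v\in S$ positive by smoothness and by the choice of neighbourhoods.
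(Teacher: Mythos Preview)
The paper does not give its own proof of this theorem; it states it with attributions to Birch, Browning--Heath-Brown, Skinner, and Frei--Madritsch, and then adds a one-line remark about an ingredient that goes into the proof. So the comparison is between your sketch and what the paper flags as the essential input.

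Your outline of the circle-method part is broadly correct and matches the cited literature: Birch for hypersurfaces, Browning--Heath-Brown for systems of forms, Skinner and Frei--Madritsch for the passage to number fields, and the standard argument that congruence and archimedean box conditions only modify the local densities by positive factors, giving weak approximation rather than mere existence.

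There is, however, a genuine gap. The theorem as stated concerns an arbitrary smooth geometrically irreducible subvariety $X\subset\P^n_k$, not a complete intersection. Your proof begins with ``First I would treat the case where~$X$ is a hypersurface\dots\ since the general case of a smooth geometrically irreducible complete intersection can be deduced\dots'', silently assuming $X$ is a complete intersection. The circle method of Birch and Browning--Heath-Brown applies to varieties cut out by explicit forms; it does not apply directly to an abstract projective variety. The missing step is to show that any smooth $X\subset\P^n_k$ satisfying~\eqref{eq:bounddimdeg} is automatically a complete intersection. This is a non-trivial algebro-geometric fact due to Bertram, Ein and Lazarsfeld, and the paper singles it out explicitly in the paragraph following the theorem as an ingredient ``used in the proof''. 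Without it, your argument has no purchase on the variety as stated. You should insert this reduction as step~(0), citing~\cite{bertrameinlazarsfeld}, before proceeding with the analytic machinery.
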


According to a result of Bertram, Ein and Lazarsfeld~\cite{bertrameinlazarsfeld} used
in the proof
of Theorem~\ref{th:bhb}, any $X \subset \P^n_k$ subject to~\eqref{eq:bounddimdeg}
is a complete intersection,
so that Theorem~\ref{th:bhb} is really a statement about smooth
complete intersections in projective space.
In view of Remark~\ref{rmks:conjct}~(v),
this explains why
$X(\A_k)^{\Br(X)}=X(\A_k)$,
an equality which is necessary for Theorem~\ref{th:bhb} to hold.

As follows from
Remark~\ref{rmks:conjct}~(v),
if Conjecture~\ref{conj:ctrc} is true
and~$X$ is assumed from the start to be a complete intersection,
then the bound~\eqref{eq:bounddimdeg} can be lowered to
\begin{align}
\dim(X)\geq \max\mkern.5mu(3,\deg(X)-1)\rlap{\text{.}}
\end{align}
Thus~\eqref{eq:bounddimdeg} is, in principle, very far from being optimal.
For complete intersections of specific multidegrees, however,
improvements to the circle method can provide better results
than Theorem~\ref{th:bhb}.  For instance, if $X \subset \P^n_\Q$ is a smooth hypersurface
of degree~$d$ with $X(\A_\Q)\neq\emptyset$, it is known that $X(\Q)\neq\emptyset$
as soon as $d=3$ and $n\geq 8$,
instead of $n\geq 16$
(see~\cite{heathbrownten}, \cite{hooley}),
or $d=4$ and $n\geq 39$, instead of $n \geq 48$
(see~\cite{bhbquartic}, \cite{hanselmann}),
or $d=5$ and $n\geq 110$, instead of $n \geq 128$ (see~\cite{browningprendiville}).
We refer the reader to~\cite{browningsurvey} for a thorough discussion of
these and related results. We also note that the circle method
has been adapted to global fields of positive characteristic;
see~\cite{browningvishefq}.

\subsubsection{Sieve methods, additive combinatorics}
\label{subsubsec:sieve}

Before giving examples of what sieve methods and additive combinatorics can do,
let us introduce a convenient definition.  Let $L/k$ be a finite extension of number fields.
Denote the Weil restriction of scalars of~$\A^1_L$ from~$L$ down to~$k$
by $R_{L/k}(\A^1_L)$.
Let $s \geq 0$.  Let $f \in k[u_1,\dots,u_s]$.
The choice of a basis $\omega=(\omega_1,\dots,\omega_m)$ of~$L$ as a $k$\nobreakdash-vector space
determines an isomorphism $R_{L/k}(\A^1_L) \times \A^s_k \simeq \A^{m+s}_k$
as well as a polynomial equation $N_{L/k}(x_1\omega_1 +\dots+x_m\omega_m)=f(u_1,\dots,u_s)$
in $m+s$ variables.  The corresponding hypersurface of $R_{L/k}(\A^1_L) \times \A^s_k$ is independent of the choice of~$\omega$.
We shall refer to it as \emph{the affine variety defined by the equation $N_{L/k}(x)=f(u)$}.
In fact, we have already encountered
such a variety,
in Example~\ref{ex:isko},
with $L=\Q(i)$, $k=\Q$, $s=1$.
More generally, if we are given finite extensions~$L_i/k$ and polynomials $f_i \in k[u_1,\dots,u_s]$
for $i \in \{1,\dots,r\}$, we shall speak of \emph{the affine variety defined by the system $N_{L_i/k}(x_i)=f_i(u)$, $1\leq i\leq r$},
to refer to the corresponding closed subvariety of $\prod_{i=1}^r R_{L_i/k}(\A^1_{L_i}) \times \A^s_k$.

\begin{example}[\cite{irving}]
\label{ex:irving}
Let $p \geq 7$ be a prime number.
Let $k=\Q$, $L=\Q(2^{1/p})$ and $s=1$.
Let $f\in \Q[u]$ be a cubic polynomial which does not admit a root
in the cyclotomic field $\Q(\zeta_p)$.
Let~$X$ denote a smooth and proper model, over~$\Q$,
of the affine variety defined by the equation $N_{L/\Q}(x)=f(u)$.
Using sieve methods,
Irving~\cite{irving}
proves that if $X(\A_\Q)\neq\emptyset$, then $X(\Q)\neq\emptyset$
and in fact~$X(\Q)$ is dense in~$X$ for the Zariski topology.
\end{example}

Thanks to the projection $(x,u)\mapsto u$,
we may view
the affine variety considered in Example~\ref{ex:irving} as the total space of
a fibration, over~$\A^1_\Q$,
whose smooth fibres are torsors under the norm torus $N_{L/\Q}(x)=1$.
This variety does not have any other obvious geometric structure
which we might exploit to prove the assertion.

\begin{example}[\cite{bms}]
\label{ex:bms}
Let $f_1,\dots,f_r \in \Q[u_1,\dots,u_s]$
be pairwise non-proportional homogeneous linear polynomials,
with $s \geq 2$.
Let $L_1,\dots,L_r$ be quadratic extensions of~$\Q$.
Let~$X$ denote a smooth and proper model, over~$\Q$,
of the affine variety defined by the system $N_{L_i/\Q}(x_i)=f_i(u)$, $1\leq i\leq r$.
Using additive combinatorics,
Browning, Matthiesen and Skorobogatov~\cite[\textsection2]{bms} prove
that~$X(\Q)$ is dense in~$X(\A_\Q)$.
\end{example}

When $s\geq r$ and $f_1,\dots,f_r$ are linearly independent,
the affine variety considered in Example~\ref{ex:bms}
is isomorphic to an affine space.  On the other hand, if~$r$ is allowed to grow larger than~$s$,
this variety becomes geometrically very intricate; its only known structure is that of
the total space of a fibration, over~$\A^s_\Q$, whose smooth fibres are torsors under the product of the norm tori $N_{L_i/\Q}(x)=1$.
The point of Example~\ref{ex:bms} is that
additive combinatorics
remains applicable even when $s=2$ and~$r\gg 0$.

\subsection{Galois cohomological methods}
\label{subsec:galoiscoho}

If a smooth and proper variety~$X$ over~$k$ is birationally equivalent to a homogeneous space of an algebraic group~$G$,
many questions about~$X(k)$
can be reformulated in terms of Galois cohomology;
usually, non-abelian Galois cohomology
(cf.~\cite[\textsection5]{serrecg}, \cite{giraud}),
unless~$G$ is commutative.
All of the tools of Galois cohomology can then be brought to bear on the problems discussed here.
They are key to the proofs of the results we review below.

\subsubsection{Existence and density of rational points}
\label{subsubsec:existencegalois}

The following theorem, which is due to Borovoi,
builds on
earlier work of
Voskresenski\u{\i} and Sansuc
dealing with
the case of principal
homogeneous spaces (see~\cite{sansuclin}).

\begin{thm}[\cite{borovoi}]
\label{th:borovoi}
Let~$X$ be a smooth and proper variety over a number field~$k$.
Assume that~$X$ is birationally equivalent
to a homogeneous space of a connected linear algebraic group
and that the stabiliser of a geometric point of this homogeneous space is connected.
Then~$X(k)$ is dense in $X(\A_k)^{\Br(X)}$.
\end{thm}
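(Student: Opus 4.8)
Since the Brauer--Manin set and the density of rational points in it are birational invariants of smooth proper varieties (Remarks~\ref{rmks:conjct}~(iv); recall $X$ here, being geometrically unirational, is rationally connected so $\Br(X)/\Br_0(X)$ is finite), we may replace~$X$ by any smooth proper model we like. The plan is therefore to reduce to a \emph{nice} model $X$ of a homogeneous space $U = G/H$, with $G$ connected linear and $H$ connected, and to compute $X(\A_k)^{\Br(X)}$ directly using the Galois cohomology of $G$ and $H$. The guiding principle is that for such spaces the Brauer--Manin obstruction is ``abelian'': it is controlled by an algebraic object attached to the $\bark$-group $\bar H$ (essentially its abelianization, or the module $\Pic(\bar U)$) and by the algebraic fundamental group $\pi_1(G)$ in the sense of Borovoi, rather than by any transcendental input.

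First I would set up the case $H = 1$, i.e.\ $X$ a smooth proper model of a torsor $Y$ under $G$. Here one uses the theory of Voskresenski\u\i\ and Sansuc: for connected linear $G$ there is the exact sequence relating $\Br(X)/\Br_0(X)$ to $H^1(k,\Pic(\bar G))$, and the class of $Y$ in $H^1(k,G)$ pairs against this via the Brauer--Manin pairing. Using Poitou--Tate duality together with the structure theory of $G$ (reduce by dévissage along $1\to G^{\mathrm{lin}}_{\mathrm{u}}\to G\to G^{\mathrm{red}}\to 1$, then through the radical torus and the semisimple part to its simply connected cover, whose $H^1$ vanishes locally by Kneser/Harder/Chernousov and globally by the Hasse principle), one shows that an adelic point orthogonal to $\Br(X)$ forces the local classes of $Y$ to come from a global class, hence $Y(k)\neq\emptyset$; approximation (density in $X(\A_k)^{\Br(X)}$) follows the same way from strong/weak approximation for $G^{\mathrm{sc}}$ and for tori.

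Next I would treat general connected~$H$. The key geometric fact is that a homogeneous space $U=G/H$ with $H$ connected can, after enlarging~$G$, be presented so that $H$ is \emph{itself} connected linear and $U$ fibers over a homogeneous space of an abelian-by-torus type with fibers that are torsors under a connected linear group. Concretely, one uses the standard trick of replacing $G$ by $G\times_{k}(\text{a suitable quasi-trivial torus or a resolution})$ so that $H^{\mathrm{ab}}$ and the relevant $\pi_1$ become quasi-trivial or controllable; Borovoi's abelianization maps $H^i_{\mathrm{ab}}(k,-)$ and the associated exact sequences then package the obstruction. The point is that every Brauer class on~$X$ pulls back, on a suitable $G$-torsor resolution $V\to X$, to something coming from $\Br(k)$ (this is where connectedness of~$H$ is used: it kills the extra $H^1(\bark,\text{finite})$-type contributions that would otherwise produce non-trivial algebraic Brauer classes), so orthogonality to $\Br(X)$ on an adelic point lifts to an adelic point on a torsor under a connected linear group, and we invoke the case already done.

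The main obstacle is the last reduction: controlling $\Br(X)/\Br_0(X)$ and the Brauer--Manin pairing in terms of $\pi_1(G)$ and $H$ for a \emph{general} connected~$H$, including the non-reductive and the non-quasi-split cases, and checking that the abelianized Poitou--Tate sequence is exact in the right degrees so that ``local everywhere $\Rightarrow$ global'' goes through. Everything else (dévissage of $G$, vanishing of $H^1$ for simply connected groups, weak and strong approximation for $G^{\mathrm{sc}}$ and tori, birational invariance) is standard; the real work is the homological bookkeeping that turns the hypothesis ``$H$ connected'' into the statement that the Brauer--Manin obstruction is the \emph{only} one, with approximation.
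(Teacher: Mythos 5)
The paper itself does not prove this statement; it is quoted from Borovoi's work, so your sketch has to be measured against Borovoi's actual argument. Your overall strategy is the right one in outline: birational reduction, the torsor case via Voskresenski\u{\i}--Sansuc and d\'evissage through the unipotent radical, the radical torus and $G^{\mathrm{sc}}$, then abelianized Galois cohomology and Poitou--Tate duality for a general connected stabiliser. Your implicit observation that only $\Br_1(X)$ matters is also sound: density of $X(k)$ in $X(\A_k)^{\Br_1(X)}$ formally forces density in the closed subset $X(\A_k)^{\Br(X)}$, which contains $X(k)$.

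There is, however, a genuine gap precisely where the theorem is hard. You present the homogeneous space as $U=G/H$ with $H$ a connected subgroup; but if $H$ is a $k$\nobreakdash-subgroup of $G$, then $U$ has the obvious rational point $eH$ and the Hasse-principle half of the statement is vacuous. The content of the theorem concerns homogeneous spaces with no evident $k$\nobreakdash-point, whose stabiliser $\bar H\subseteq G\otimes_k\bark$ is only a $\bark$\nobreakdash-group, well defined up to conjugacy and equipped with a $k$\nobreakdash-kernel structure. Borovoi's proof hinges on attaching to such an $X$ a class in a non-abelian $H^2$ (a Springer class), on the fact that the toric quotient $\bar H^{\mathrm{tor}}$ descends canonically to a $k$\nobreakdash-torus because $\bar H$ is connected, on abelianizing the class into $H^2_{\mathrm{ab}}$, and on showing that the Brauer--Manin pairing against $\Br_1(X)$ computes exactly the local--global defect of this class --- the last comparison resting on Sansuc's description of $\Pic(\bar U)$ and $\Br_1(U)$ in terms of the character group of $\bar H$. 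Your sketch compresses all of this into the unproved assertion that on a suitable torsor resolution $V\to X$ every Brauer class becomes constant, so that orthogonality to $\Br(X)$ can be transported to a torsor under a connected linear group. That assertion is the engine of the proof, not its bookkeeping: as stated it is not justified, and without the $k$\nobreakdash-kernel/Springer-class formalism (or an equivalent device) there is no object in which to locate the obstruction to the existence of a rational point in the first place.
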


The
hypothesis that the algebraic group be linear can be removed,
at the expense of replacing $X(\A_k)^{\Br(X)}$ with
 $X(\A_k)^{\Br(X)}_\poonen$ and assuming that the
Tate--Shafarevich group of the underlying abelian variety
is finite (see~\cite[Th.~A.1]{boctsko}).

The hypothesis that the stabiliser of a geometric point be connected, however, is more serious.
Whether one can dispense with it is an open problem, and it is one of the main challenges posed by Conjecture~\ref{conj:ctrc}.
A positive solution must lie deep as it would solve the inverse Galois problem as well
(see~\cite[\textsection3.5]{serretopics}).

Let~$X$ be a smooth and proper variety over a number field~$k$, birationally equivalent to a homogeneous space of a connected linear algebraic group~$G$.
Let $H_{\bar x} \subseteq G \otimes_k \bark$ denote the stabiliser of a $\bark$\nobreakdash-point~$\bar x$
of the homogeneous space in question.
It is known that the task of proving the density of~$X(k)$ in~$X(\A_k)^{\Br(X)}$ can be reduced to the case
where $G=\mathrm{SL}_n$ and~$H_{\bar x}$ is finite
(see~\cite{lucchinireduction},
\cite{demarchelucchinireduction}).
It is also known that~$X(k)$ is dense in~$X(\A_k)^{\Br(X)}$
if~$G$ is semi-simple and simply connected
and~$H_{\bar x}$ is finite and abelian
(see~\cite{borovoi}).
Some progress
on the case of a finite solvable group~$H_{\bar x}$
is achieved in~\cite{harariquelques} and in~\cite{lucchiniapprox}.

As it turns out, the proofs of the two positive results
from~\cite{borovoi} that
we have just quoted,
in which either $H_{\bar x}$ is connected or~$G$ is semi-simple and simply connected and~$H_{\bar x}$
is finite and abelian, only make use of the algebraic subgroup~$\Br_1(X)$
of~$\Br(X)$
and hence imply, as a by-product, that $X(\A_k)^{\Br(X)}=X(\A_k)^{\Br_1(X)}$.
In fact, in both cases, the equality $\Br(X)=\Br_1(X)$ holds.
When~$H_{\bar x}$ is connected,
this is a theorem of Borovoi, Demarche and Harari~\cite{borovoidemarcheharari},
which goes back to Bogomolov~\cite{bogomolov} when~$G$ is semi-simple and
simply connected; see also~\cite{borovoibis}.
When~$G$ is semi-simple and simply connected
and~$H_{\bar x}$ is finite and abelian,
combine~\cite[Appendix~A]{lucchinitransform}, \cite[Ex.~4.10]{ctsanmumbai}
and the theorem of Fischer~\cite{fischer}
according to which~$X$ is geometrically rational if $G=\mathrm{GL}_n$
and~$H_{\bar x}$ is finite and abelian.

On the other hand,
in general,
transcendental elements of~$\Br(X)$ do exist and cannot be ignored:
Demarche, Lucchini Arteche and Neftin~\cite{demarchelucchinineftin}
recently gave an example showing
that even when $G=\mathrm{SL}_n$ and~$H_{\bar x}$ is a semi-direct product of
two finite abelian groups,
it can happen that
$X(\A_k)^{\Br(X)} \neq X(\A_k)^{\Br_1(X)} = X(\A_k)$.
Thus,
the obstacles to extending Theorem~\ref{th:borovoi} to arbitrarily disconnected stabilisers
seem closely related to the usual difficulties inherent to
transcendental elements of the Brauer group
(see Remark~\ref{rmks:conjct}~(vii)
and~\textsection\ref{subsubsec:k3surfaces}).

\subsubsection{$R$\nobreakdash-equivalence}

We refer the reader to~\cite[Ch.~6]{voskbook}
and~\cite{ctflasque}
for a thorough discussion of $R$\nobreakdash-equivalence on linear algebraic groups.
With the notation
of~\textsection\ref{subsubsec:existencegalois}, descent theory provides a complete description of the $R$\nobreakdash-equivalence classes on~$X(k)$ when~$G$ is an algebraic torus (see~\cite{ctsantores}
or~\textsection\ref{subsec:descent} below).
When $H_{\bar x}=1$,
descriptions of $X(k)/R$ are available
for more general linear algebraic groups~$G$;
for example,
semi-simple adjoint groups of classical type
are dealt with in~\cite{merkadjoint}.
Without assumptions on~$G$, the set $X(k)/R$ is known to be finite when $H_{\bar x}=1$
(see~\cite{gille} and~\cite[\textsection1.3]{gillespecialisation}).
Finiteness of~$X(k)/R$ for homogeneous spaces with $H_{\bar x}\neq 1$
remains a challenge, even when~$H_{\bar x}$ is connected.

\subsubsection{Zero-cycles}

Thanks to the theorem of Liang~\cite{liangarithmetic} stated
at the end of~\textsection\ref{subsec:zerocycles},
the results of Borovoi~\cite{borovoi} about rational points imply their analogues
for zero-cycles.  Thus, in
the notation of~\textsection\ref{subsubsec:existencegalois}
and of~\textsection\ref{subsec:zerocycles},
Conjectures~\ref{conj:e1} and~(E)
hold for~$X$ as soon as
either
$H_{\bar x}$ is connected
or~$G$ is semi-simple and simply connected
and~$H_{\bar x}$ is finite and abelian.
For more general stabilisers, these conjectures are open.

\subsubsection{Integral points}
\label{intpoints:galois}

The following theorem is due to Borovoi and Demarche~\cite{borovoidemarche}.
It builds on previous work of
Colliot-Thélène and Xu~\cite{ctxu}
and of Harari~\cite{hararidefautforte}.

\begin{thm}
Let~$X$ be a homogeneous space of a connected linear algebraic group~$G$
over a number field~$k$.
Assume that the stabiliser of a geometric point is connected.
Let~$S \subset \Omega$ be a finite subset containing~$\Omega_\infty$
and $k_S=\prod_{v\in S}k_v$.
If~$F(k_S)$ is non-compact for every $k$\nobreakdash-simple factor~$F$ of the
semi-simple simply connected part of~$G$,
then~$X(k)$ is dense in $X(\A_k)^{\Br(X)}$ off~$S$
(see Definition~\ref{def:densityoff}).
\label{th:borovoidemarche}
\end{thm}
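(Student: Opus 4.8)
The plan is to reduce, by dévissage on the structure of $G$, to two situations in which the result is already available: homogeneous spaces of a semi-simple simply connected group, settled by Colliot-Thélène and Xu~\cite{ctxu}, and homogeneous spaces of a torus, settled by Harari~\cite{hararidefautforte}. Since $k$ has characteristic~$0$, the unipotent radical $R_u(G)$ is $k$-split and is a successive extension of additive groups; it plays no role for the Brauer group, and because affine space satisfies strong approximation off any non-empty finite set of places, one may assume $G$ reductive. Passing to a $z$-extension (a central extension by a quasi-trivial torus with simply connected derived group), one may further assume that $G^{\mathrm{der}}$ equals the semi-simple simply connected part $\prod_F F$ of $G$, which I shall denote $G^{\mathrm{sc}}$, so that there is an exact sequence $1 \to G^{\mathrm{sc}} \to G \to G^{\mathrm{tor}} \to 1$ with $G^{\mathrm{tor}}$ a torus; these operations preserve the non-compactness hypothesis, and strong approximation for $X$ can be read off that of the $z$-extension.

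Next I would push $X = G/H$ along $G \to G^{\mathrm{tor}}$, obtaining a smooth surjection $f : X \to Y$ onto $Y = G^{\mathrm{tor}}/\overline H$, where $\overline H$ is the image of $H$; as $H$ is connected, $\overline H$ is a subtorus, so $Y$ is a torus (or at any rate a torsor under one). The fibres of $f$ are homogeneous spaces of $G^{\mathrm{sc}}$, with geometric stabiliser $H \cap G^{\mathrm{sc}}$ — which need not be connected even when $H$ is. Repairing this is where Borovoi's technique of replacing $X$ by a torsor under a quasi-trivial torus comes in: one arranges, at the cost of such a resolution, that the fibre stabilisers become connected, and it is precisely here that the hypothesis that the stabiliser of a geometric point of $X$ is \emph{connected} is used. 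With the fibration in this form, the fibres fall under Colliot-Thélène and Xu~\cite{ctxu} — the hypothesis that $F(k_S)$ is non-compact for each $k$-simple factor~$F$ being exactly what makes the classical strong approximation theorem of Kneser and Platonov applicable to $G^{\mathrm{sc}}$, on which that result rests — while the base falls under Harari~\cite{hararidefautforte}, whose theorem that $Y(k)$ is dense in $Y(\A_k)^{\Br(Y)}$ off~$S$ ultimately rests on Poitou--Tate duality, which identifies the defect of strong approximation with a pairing against $\Br(Y)/\Br_0(Y)$.

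The substance of the argument is the passage from these two inputs to the statement for $X$. One first analyses $\Br(X)$ by means of the Leray spectral sequence for $f$ together with Borovoi's abelianisation of the Galois cohomology of $G$~\cite{borovoi}: since a homogeneous space of $G^{\mathrm{sc}}$ carries no unexpected algebraic Brauer classes, the Brauer--Manin condition on $X$ splits into a ``horizontal'' part, pulled back from $Y$, and a ``vertical'' part living in the fibres. Given an adelic point of $X(\A_k)^{\Br(X)}$ and a prescribed open neighbourhood as in Definition~\ref{def:densityoff}, the scheme is then: approximate the image in~$Y$ by $y \in Y(k)$; use the vanishing of the pertinent $H^1$ in the fibre direction to produce on $X_y$ an adelic point close to the given one and orthogonal to $\Br(X_y)$; and approximate within $X_y$ by a rational point. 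The main obstacle is the gluing itself, for two reasons. First, strong approximation off~$S$, unlike weak approximation, does not glue formally along a fibration: the approximations chosen in the base and in the fibre must be reconciled at the finite set of auxiliary places where something goes wrong, and this is done by exploiting that the fibre groups satisfy strong approximation off~$S$ — so the non-compactness hypothesis is invoked a second, essential time to absorb the discrepancy. Second, one must keep the vertical and horizontal Brauer classes separate, as it is this separation that guarantees that orthogonality to $\Br(X)$ on the total space forces orthogonality to $\Br(Y)$ on the base and to $\Br(X_y)$ on the fibre. Everything not mentioned here is either structure theory of linear algebraic groups or a direct appeal to~\cite{ctxu} and to Harari's results on commutative groups.
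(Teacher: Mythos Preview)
The paper does not contain a proof of this theorem. It is an expository survey; Theorem~\ref{th:borovoidemarche} is stated with attribution to Borovoi and Demarche~\cite{borovoidemarche}, noting that it builds on Colliot-Th\'el\`ene--Xu~\cite{ctxu} and Harari~\cite{hararidefautforte}, and the surrounding text only adds the remarks that the non-compactness hypothesis is vacuous unless~$k$ is totally real and that the case $X=G$ with~$G$ semi-simple simply connected is the classical Kneser--Platonov theorem. There is therefore nothing in the paper to compare your argument against.

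That said, your sketch is a fair outline of the strategy actually used in~\cite{borovoidemarche}: reduction to the reductive case, passage to a $z$-extension so that the derived group is simply connected, fibering over the torus quotient, and combining the result of~\cite{ctxu} on the fibres with Harari's result on the toric base. You correctly flag the genuine subtlety that $H\cap G^{\mathrm{sc}}$ need not be connected and that a further resolution is required; you also correctly identify the gluing of strong approximation along the fibration as the non-formal step. Where your write-up is thin is precisely at those two points: ``Borovoi's technique\ldots comes in'' and ``this is done by exploiting that the fibre groups satisfy strong approximation'' are gestures rather than arguments, and the control of the Brauer group you describe (``splits into a horizontal part\ldots and a vertical part'') would need to be made precise, since in~\cite{borovoidemarche} this is handled via abelianised cohomology and the Brauer--Manin pairing for the abelianised fundamental group rather than a na\"\i ve Leray-type splitting. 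If you intend this as a proof rather than a plan, those steps need to be filled in; but as a summary of the d\'evissage in the cited reference, it is broadly accurate.
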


We note that the non-compactness hypothesis is vacuous unless~$k$ is totally real.

Theorem~\ref{th:borovoidemarche} is a classical result of
Kneser and Platonov when~$G$ is semi-simple and simply connected
and~$X=G$; in this case, the non-compactness hypothesis
is also necessary for the conclusion to hold.

The quotient $\Br(X)/\Br_0(X)$ is finite in Theorem~\ref{th:borovoi}
(see Remark~\ref{rmks:conjct}~(ii)),
but in Theorem~\ref{th:borovoidemarche}, it need not be
(example: $X=\Gm$).
In the special case of torsors under tori,
refinements of Theorem~\ref{th:borovoidemarche} involving only a finite subgroup
of $\Br(X)/\Br_0(X)$, leading to concrete criteria for the existence of integral points,
have been worked out by Wei and Xu, see~\cite{weixu}, \cite{weixumult}.

\subsection{The method of descent}
\label{subsec:descent}

It has been understood since Fermat that in order to answer questions about~$X(k)$,
it is sometimes both useful and easier
to study~$Y(k)$ for certain
auxiliary varieties~$Y$ equipped with a morphism $f:Y\to X$.
When~$X$ is a smooth and proper curve of genus~$1$,
the classical theory of descent considers morphisms~$f$ which are étale covers
and varieties~$Y$ which are again curves of genus~$1$ (see~\cite[Ch.~X]{silvermantopics}).
A similar approach, with~$f$ a torsor under an algebraic torus, was pioneered by Colliot-Thélène and Sansuc
to investigate the rational points of geometrically rational varieties (see~\cite{ctsandescent1}, \cite{ctsandescent2}).
Subsequent improvements have led to a common generalisation of these two types of descent,
in which~$Y$ is also allowed to be a torsor under an arbitrary (possibly disconnected) linear algebraic group
(see~\cite{skorobeyond}, \cite{harariweakapprox}, \cite{skobook}, \cite{harskononab}, \cite{hararinonabelian}).
In this section, we content ourselves with summarising the method 
in the simplest possible setting,
as it is described in~\cite{ctsandescent2};
this will be sufficient for the applications given in~\textsection\ref{subsubsec:examples}.

\subsubsection{Definitions and main statements}
\label{subsubsec:defmainstatements}

Let us fix a field~$k$, a separable closure~$\bark$
and a smooth, separated and geometrically irreducible variety~$X$ over~$k$.
We denote by $\bark[X]$ the ring of invertible functions on $X\otimes_k \bark$
and assume that $\bark[X]^*=\bark^*$.  This assumption is satisfied if~$X$ is proper,
which is the case of main interest, though it is sometimes useful to allow
non-proper~$X$ as well (see Remark~\ref{rk:descentandopen}
and~\textsection\ref{subsubsec:examples} below).
We also assume, for simplicity, that $\Pic(X \otimes_k \bark)$ is a free $\Z$\nobreakdash-module
of finite rank.
This is so, in particular, when~$X$ is rationally connected.

The Galois group $G_k=\Gal(\bark/k)$
acts on $\hat T = \Pic(X \otimes_k \bark)$.
Let $T=\Homrond(\hat T,\Gm)$ denote the corresponding algebraic torus over~$k$.
(An algebraic torus is an algebraic group over~$k$ which over~$\bark$ becomes
isomorphic to a product of copies of~$\Gm$.)
Recall that a \emph{torsor under~$T$, over~$X$}, is a variety~$Y$ over~$k$
endowed with a surjective morphism $f:Y\to X$
and with an action of~$T$ which preserves the geometric fibres of~$f$ and is simply transitive on each of them.
Up to isomorphism, torsors under~$T$, over~$X$, are classified by the étale cohomology group $H^1(X,T)$.
The latter fits into the exact sequence of low degree terms of the Hochschild--Serre spectral sequence,
which, in view of the equality $\bark[X]^*=\bark^*$, takes the following form:
\begin{align}
\label{eq:univt}
\xymatrix@C=1em@R=2ex{
0 \ar[r] & H^1(k,T) \ar[r] & H^1(X,T) \ar[r] & H^1(X \otimes_k \bark, T)^{G_k} \ar[r] &
H^2(k,T) \ar[r] & H^2(X,T)\rlap{\text{.}}
}
\end{align}
As $\hat T \simeq \Z^N$ and $H^1(X\otimes_k \bark,\Gm)=\Pic(X\otimes_k \bark)$,
there are canonical isomorphisms
\begin{align*}
H^1(X \otimes_k \bark,\Homrond(\hat T,\Gm))=\Hom(\hat T,H^1(X \otimes_k \bark,\Gm))=\End(\hat T)\rlap{\text{,}}
\end{align*}
through which the identity of~$\hat T$ gives rise to a canonical element
$\alpha \in H^1(X \otimes_k \bark, T)^{G_k}$.  Any torsor under~$T$, over~$X$, whose isomorphism class over~$\bark$
is~$\alpha$ is called a \emph{universal torsor} over~$X$
(see~\cite[\textsection2.0]{ctsandescent2}).

\begin{prop}
\label{prop:decomp}
Letting $f:Y\to X$ run over the
 isomorphism classes of universal torsors over~$X$,
there is a canonical decomposition of~$X(k)$ as a disjoint union:
\begin{align}
\label{eq:decomp}
X(k) = \coprod_{f:Y\to X} f(Y(k))\rlap{\text{.}}
\end{align}
In addition, if~$X$ is proper, every $R$\nobreakdash-equivalence class of~$X(k)$ is contained
in $f(Y(k))$ for some universal torsor $f:Y\to X$.
\end{prop}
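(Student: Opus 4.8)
The plan is to establish the decomposition \eqref{eq:decomp} by a cohomological bookkeeping argument using the exact sequence \eqref{eq:univt}, and then to deduce the $R$-equivalence statement from a specialisation/functoriality argument applied to morphisms $\mathbf{P}^1_k\to X$.

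First, for the decomposition of $X(k)$: I would fix a universal torsor $f_0\colon Y_0\to X$, with isomorphism class $[Y_0]\in H^1(X,T)$ mapping to the canonical element $\alpha\in H^1(X\otimes_k\bark,T)^{G_k}$. Given a rational point $x\in X(k)$, pulling back $[Y_0]$ along $x$ gives a class in $H^1(k,T)$, and the fibre $Y_{0,x}$ is nonempty (i.e.\ $x\in f_0(Y_0(k))$) precisely when this pulled-back class vanishes. To capture the points whose fibre is empty on $Y_0$, one twists: any torsor $f\colon Y\to X$ under $T$ with $[Y]\mapsto\alpha$ differs from $Y_0$ by an element of $H^1(k,T)$ (the kernel of $H^1(X,T)\to H^1(X\otimes_k\bark,T)^{G_k}$ in \eqref{eq:univt}), so the universal torsors form a torsor under $H^1(k,T)$. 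The key point is then the standard twisting formula: for $x\in X(k)$ and $c\in H^1(k,T)$, the fibre $({}^cY_0)_x$ is nonempty iff $x^*[Y_0]=-c$ in $H^1(k,T)$ (using that $T$ is abelian, so twisting by $c$ shifts the class by $c$). Hence each $x\in X(k)$ lies in $f(Y(k))$ for exactly one universal torsor $f\colon Y\to X$, namely the one twisted by $-x^*[Y_0]$, giving \eqref{eq:decomp} as a disjoint union. One should also note surjectivity is automatic and disjointness is what the twisting formula provides; the canonicity of the decomposition comes from the fact that the indexing set is the set of isomorphism classes itself, not a choice of base torsor.

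For the $R$-equivalence assertion, suppose $x,y\in X(k)$ are directly $R$-equivalent via $\phi\colon\mathbf{P}^1_k\to X$ with $\phi(0)=x$, $\phi(\infty)=y$; it suffices to show $x$ and $y$ lie in the same $f(Y(k))$. By the first part, $x$ lies in $f(Y(k))$ for the unique universal torsor with class twisted by $-x^*[Y_0]$; so I must show $x^*[Y_0]=y^*[Y_0]$ in $H^1(k,T)$, i.e.\ that the composite $X(k)\to H^1(k,T)$, $x\mapsto x^*[Y_0]$, is constant on $R$-equivalence classes. Pulling $[Y_0]$ back along $\phi$ gives a class in $H^1(\mathbf{P}^1_k,T)$; since $X$ is proper and $\bark[X]^*=\bark^*$, and since $\Pic(\mathbf{P}^1_{\bark})=\Z$ with the pullback $\hat T=\Pic(X\otimes_k\bark)\to\Pic(\mathbf{P}^1_{\bark})$ a specific map, the sequence \eqref{eq:univt} for $\mathbf{P}^1_k$ shows $H^1(\mathbf{P}^1_k,T)\to H^1(\mathbf{P}^1_k\otimes_k\bark,T)^{G_k}$ has kernel $H^1(k,T)$; the point is that evaluation at $0$ and at $\infty$ both split the structure map $\mathbf{P}^1_k\to\Spec k$, so $x^*[Y_0]$ and $y^*[Y_0]$ are both the image of the same class in $H^1(k,T)$ under the identity, hence equal. (Equivalently: $H^1(\mathbf{P}^1_k,T)=H^1(k,T)$ because $T$ is a torus and $\Pic(\mathbf{P}^1_k\otimes_k\bark)^{G_k}$ contributes nothing new once one checks the map to $H^2(k,T)$ is injective on the relevant piece — or one simply invokes that the two sections $0,\infty$ induce the same map $H^1(\mathbf{P}^1_k,T)\to H^1(k,T)$ as the structure map, since they agree after base change to $\bark$.) Chaining over a sequence of direct $R$-equivalences gives the general statement.

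The main obstacle I anticipate is the $R$-equivalence half, specifically making precise that $x^*[Y_0]=y^*[Y_0]$ whenever $x$ and $y$ are connected by a rational curve. The cleanest route is the homotopy invariance statement $H^1(\mathbf{P}^1_k,T)\isoto H^1(k,T)$ (pullback along the structure map is an isomorphism for $T$ a torus, or more robustly for any group of multiplicative type, since $\Pic(\mathbf{P}^1_{\bark})$ is a trivial Galois module and the edge map to $H^2$ can be controlled); granting this, both $0^*\phi^*[Y_0]$ and $\infty^*\phi^*[Y_0]$ equal the unique class in $H^1(k,T)$ pulling back to $\phi^*[Y_0]$, so they coincide. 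The subtlety is that $\mathbf{P}^1_k$ is not proper-with-$\bark[X]^*=\bark^*$ in a way that directly quotes the hypothesis on $X$ — but $\bark[\mathbf{P}^1]^*=\bark^*$ holds and $\Pic(\mathbf{P}^1_{\bark})=\Z$ is free, so \eqref{eq:univt} applies to $\mathbf{P}^1_k$ as well, and one needs the (easy, but worth stating) fact that $\Pic(\mathbf{P}^1_{\bark})^{G_k}\to H^2(k,T)$ lands in the image of $H^2(k,T)\to H^2(\mathbf{P}^1_k,T)$ trivially enough that $H^1(\mathbf{P}^1_k,T)=H^1(k,T)$. I would cite \cite[\textsection2.0]{ctsandescent2} for this formalism rather than reprove it, and present the argument as above.
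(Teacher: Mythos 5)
Your treatment of the decomposition \eqref{eq:decomp} is essentially the paper's argument (liftings of $\alpha$ form a torsor under $H^1(k,T)$ by \eqref{eq:univt}, and for each $x\in X(k)$ exactly one lifting $\beta$ satisfies $\beta(x)=0$). One step is missing there, however: you ``fix a universal torsor $f_0\colon Y_0\to X$'' without saying why one exists. This is not automatic --- a priori $\alpha$ might fail to lift to $H^1(X,T)$ --- and it is exactly what surjectivity of the decomposition requires. The paper derives it from $X(k)\neq\emptyset$: a rational point gives a retraction of $H^2(k,T)\to H^2(X,T)$, so this map is injective and \eqref{eq:univt} shows $\alpha$ lifts; when $X(k)=\emptyset$ the statement is vacuous. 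You should add this (easily fixed) step.

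The real problem is in the $R$\nobreakdash-equivalence half. Your ``cleanest route'', the claim $H^1(\P^1_k,T)\isoto H^1(k,T)$, is false: already for $T=\Gm$ one has $H^1(\P^1_k,\Gm)=\Pic(\P^1_k)=\Z$ while $H^1(k,\Gm)=0$. In \eqref{eq:univt} applied to $\P^1_k$, the term $H^1(\P^1_{\bark},T)^{G_k}=\Hom(\hat T,\Pic(\P^1_{\bark}))^{G_k}$ does not vanish, and its map to $H^2(k,T)$ is zero (the class of $\sO(1)$ is defined over $k$), so $H^1(\P^1_k,T)$ is genuinely larger than $H^1(k,T)$ --- the opposite of what you assert in your final paragraph. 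The weaker statement you fall back on, that $0^*=\infty^*\colon H^1(\P^1_k,T)\to H^1(k,T)$, is what is actually needed and is true, but your justification (``they agree after base change to $\bark$'') proves nothing, since $H^1(\bark,T)=0$ and the kernel of $H^1(k,T)\to H^1(\bark,T)$ is everything. The correct argument, which is the paper's, is to restrict $\phi^*\beta$ to an affine open $\A^1_k\subset\P^1_k$ containing both $0$ and $\infty$ and use the homotopy invariance $H^1(k,T)=H^1(\A^1_k,T)$, valid because $\Pic(\A^1_{\bark})=0$ kills the third term of \eqref{eq:univt}; the restricted class is then constant and its pull-backs along $0$ and $\infty$ coincide.
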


\begin{proof}
If $X(k)=\emptyset$, this is clear.  Otherwise, the choice of a rational point
determines a retraction of the natural map
$H^2(k,T)\to H^2(X,T)$, which is therefore injective;
by~\eqref{eq:univt}, it follows that~$\alpha$ can be lifted to some $\beta_0 \in H^1(X,T)$.
Let $x \in X(k)$.
We still need to check
that there exists a unique universal torsor $f:Y\to X$
such that $x\in f(Y(k))$, or equivalently, that there exists a unique lifting $\beta \in H^1(X,T)$
of~$\alpha$ whose pull-back $\beta(x) \in H^1(k,T)$ along $x\in X(k)$ vanishes.
It is clear from~\eqref{eq:univt} that $\beta=\beta_0-\beta_0(x)$ is the unique such lifting.
Finally, the assertion about $R$\nobreakdash-equivalence follows from the remark that
$H^1(k,T)=H^1(\A^1_k,T)$, which implies that $\beta(x)=\beta(y)$ if $x,y\in X(k)$
are
directly $R$\nobreakdash-equivalent and~$X$ is proper.
\end{proof}

As a consequence of~\eqref{eq:decomp},
if~$k$ is a number field, then
\begin{align}
\label{eq:seqinclusions2}
X(k)\subseteq \bigcup_{f:Y\to X} f(Y(\A_k)) \subseteq X(\A_k)\rlap{\text{,}}
\end{align}
where $f:Y\to X$ ranges over the universal torsors over~$X$.
The analogy
with~\eqref{eq:seqinclusions} calls out for a comparison theorem.

\begin{thm}[Colliot-Thélène and Sansuc~\cite{ctsandescent2}]
\label{th:maindescent}
Let~$X$ be a smooth, separated and geometrically irreducible variety over
a number field~$k$,
such that $\bark[X]^*=\bark^*$ and that
$\Pic(X \otimes_k \bark)$ is a free $\Z$\nobreakdash-module
of finite rank.
Then
\begin{align}
\label{eq:decompdescent}
X(\A_k)^{\Br_1(X)}=\bigcup_{f:Y\to X} f(Y(\A_k))\rlap{\text{,}}
\end{align}
where $f:Y\to X$ ranges over the isomorphism classes of universal torsors over~$X$.
In addition, if~$X$ is proper, there are only finitely many $f:Y\to X$
with $Y(\A_k)\neq\emptyset$.
\end{thm}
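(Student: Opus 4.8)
The plan is to establish the two inclusions of \eqref{eq:decompdescent} separately, relying on the exact sequence \eqref{eq:univt}, the functoriality of the Brauer--Manin pairing, and the local and global duality theorems for tori. First I would set up local analogues of Proposition~\ref{prop:decomp}: for each place $v$, the same argument shows that $X(k_v) = \coprod_{f} f(Y(k_v))$, the disjoint union ranging over universal torsors, where now the obstruction to lifting $\alpha$ to $H^1(X\otimes_k k_v, T)$ vanishes because $X(k_v)\neq\emptyset$ (we may assume $X(\A_k)\neq\emptyset$, else both sides are empty). Given an adelic point $(P_v)\in X(\A_k)^{\Br_1(X)}$, each $P_v$ lies in $f_v(Y_v(k_v))$ for a unique universal torsor $f_v$, classified by some $\beta_v \in H^1(k_v, T)$ obtained by pulling back a fixed global lift $\beta_0\in H^1(X,T)$ of $\alpha$ along $P_v$; explicitly $\beta_v = \beta_0(P_v)$. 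The task is to show that, after modifying $\beta_0$ by a global class in $H^1(k,T)$, one can arrange that the family $(\beta_v)$ comes from a \emph{single} global torsor, i.e. that $(\beta_0(P_v))_v$ lies in the image of $H^1(k,T) \to \prod_v H^1(k_v,T)$.

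The key point is that the Brauer--Manin condition is exactly what controls this. By the Poitou--Tate exact sequence for the torus $T$, the cokernel of $H^1(k,T)\to \prod'_v H^1(k_v,T)$ injects into the dual of $H^1(k,\hat T)$ (Pontryagin dual), and the obstruction to $(\beta_0(P_v))_v$ being globally rational is a pairing against $H^1(k,\hat T)$. On the other hand, for a universal torsor the Hochschild--Serre machinery identifies the relevant part of $\Br_1(X)/\Br_0(X)$ with (a subquotient of) $H^1(k,\hat T) = H^1(k,\Pic(X\otimes_k\bark))$ — this is the standard computation underlying descent theory, using $\bark[X]^*=\bark^*$ and the freeness of the Picard group. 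Unwinding the definitions, the evaluation $\sum_v \mathrm{inv}_v\,\mathfrak{A}(P_v)$ of an algebraic Brauer class $\mathfrak{A}$ corresponding to $\chi\in H^1(k,\hat T)$ equals the Poitou--Tate pairing of $(\beta_0(P_v))_v$ with $\chi$. Hence the vanishing of all such evaluations — which is precisely the hypothesis $(P_v)\in X(\A_k)^{\Br_1(X)}$ — forces $(\beta_0(P_v))_v$ into the image of $H^1(k,T)$, say equal to the image of $\gamma$. Replacing $\beta_0$ by $\beta_0 - \gamma$ gives a global universal torsor $f:Y\to X$ with $\beta(P_v)=0$ for all $v$, so each $P_v$ lifts to $Y(k_v)$ and $(P_v)\in f(Y(\A_k))$. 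The reverse inclusion $f(Y(\A_k))\subseteq X(\A_k)^{\Br_1(X)}$ is the easy direction: any $\mathfrak{A}\in\Br_1(X)$ pulls back to $\Br_1(Y)$, and there the relevant classes die (for a universal torsor $\Br_1(Y)/\Br_0(Y)$ is controlled by $H^1(k,\mathrm{Pic}(Y\otimes_k\bark))$, which vanishes because the geometric Picard group of a universal torsor has trivial Galois action — this is part of the defining property of universality); combined with the reciprocity law and the projection formula, $\sum_v \mathrm{inv}_v\,\mathfrak{A}(f(Q_v)) = \sum_v\mathrm{inv}_v\,(f^*\mathfrak{A})(Q_v)=0$ for any $(Q_v)\in Y(\A_k)$.

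For the finiteness statement when $X$ is proper, I would argue that universal torsors with a local point everywhere are classified, up to the ambiguity already accounted for, by the finite group $\ker\big(H^1(k,T)\to\prod_v H^1(k_v,T)\big)$, which is finite by the Poitou--Tate finiteness theorem (it is dual to a subquotient of a finitely generated group); more precisely, the set of isomorphism classes of universal torsors $f:Y\to X$ with $Y(\A_k)\neq\emptyset$ is either empty or a torsor under this finite kernel, since two such torsors differ by a class in $H^1(k,T)$ that becomes trivial at every place. The main obstacle in the whole argument is the compatibility claim, namely that the Brauer--Manin pairing on $X(\A_k)$ restricted to $\Br_1(X)$ matches the Poitou--Tate pairing between $\prod'_v H^1(k_v,T)$ and $H^1(k,\hat T)$ under the identifications coming from \eqref{eq:univt}; this is where the functoriality of cup products, the comparison of the Hochschild--Serre spectral sequence for $X$ with that for $\mathrm{Spec}\,k$, and the naturality of local invariants all have to be checked to fit together, and it is the technical heart of the theorem (carried out in detail in \cite{ctsandescent2}).
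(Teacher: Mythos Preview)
Your overall strategy is the standard one and matches what the paper indicates (Poitou--Tate duality for~$T$ and~$\hat T$, combined with the Hochschild--Serre sequence~\eqref{eq:univt}); the compatibility you identify as the technical heart is indeed the crux. However, two steps do not go through as written.

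\medskip
\textbf{Existence of a global lift~$\beta_0$.} You pass from the local liftings of~$\alpha$ (which exist because $X(k_v)\neq\emptyset$) to ``a fixed global lift $\beta_0\in H^1(X,T)$'' without justification. This is not free: the obstruction $\partial(\alpha)\in H^2(k,T)$ from~\eqref{eq:univt} lies in $\mathcyr{Sh}^2(k,T)$ once $X(\A_k)\neq\emptyset$, but $\mathcyr{Sh}^2(k,T)$ need not vanish. One must first show that $X(\A_k)^{\Br_1(X)}\neq\emptyset$ forces $\partial(\alpha)=0$. The argument uses the Poitou--Tate perfect pairing $\mathcyr{Sh}^2(k,T)\times\mathcyr{Sh}^1(k,\hat T)\to\Q/\Z$: the image of $\mathcyr{Sh}^1(k,\hat T)$ in $H^1(k,\hat T)\simeq\Br_1(X)/\Br_0(X)$ consists of classes whose local evaluations are constant on~$X(k_v)$, and one checks that their (constant) Brauer--Manin value on any adelic point coincides with their pairing against~$\partial(\alpha)$. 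Without this step, your argument cannot start, and the case where no universal torsor exists (so the right-hand side of~\eqref{eq:decompdescent} is empty) is left unaddressed.

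\medskip
\textbf{Finiteness.} Your claim that two universal torsors $Y,Y'$ with $Y(\A_k),Y'(\A_k)\neq\emptyset$ differ by a class in $\mathcyr{Sh}^1(k,T)$ is false. If $Y'=Y^\gamma$ for $\gamma\in H^1(k,T)$, the condition $Y'(k_v)\neq\emptyset$ only says $\gamma_v=\beta_0(P'_v)$ for \emph{some} $P'_v\in X(k_v)$, not that $\gamma_v=0$. The correct argument uses properness in an essential way: one spreads out to see that the evaluation map $X(k_v)\to H^1(k_v,T)$, $P_v\mapsto\beta_0(P_v)$, has finite image for every~$v$ and trivial image for all but finitely many~$v$ (this is where $X(k_v)=\sX(\sO_v)$ is needed). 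Hence the image of $X(\A_k)\to\prod'_v H^1(k_v,T)$ is finite, and the set of universal torsors with adelic points is the preimage of this finite set in $H^1(k,T)$, a finite union of cosets of the finite group $\mathcyr{Sh}^1(k,T)$.

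\medskip
A minor point on the easy inclusion: the reason $f^*\Br_1(X)\subseteq\Br_0(Y)$ is not that $\Pic(Y\otimes_k\bark)$ has trivial Galois action, but the stronger fact that $\Pic(Y\otimes_k\bark)=0$ (and $\bark[Y]^*=\bark^*$), which follows from the exact sequence $0\to\bark[X]^*/\bark^*\to\bark[Y]^*/\bark^*\to\hat T\xrightarrow{\lambda}\Pic(X\otimes_k\bark)\to\Pic(Y\otimes_k\bark)\to 0$ with~$\lambda$ the identity.
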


We recall that
 $\Br_1(X)=\Ker(\Br(X)\to\Br(X\otimes_k\bark))$
(see Remark~\ref{rmks:conjct}~(vii)).
The proof of Theorem~\ref{th:maindescent} rests on the Poitou--Tate duality theorem relating
the Galois cohomology of~$T$ with that of~$\hat T$, and on~\eqref{eq:univt};
see~\cite[\textsection6.1]{skobook}.

The next corollary shows how Proposition~\ref{prop:decomp}
and Theorem~\ref{th:maindescent} effectively reduce certain questions on~$X$ to similar questions on
the universal torsors over~$X$.
In its statement, we denote by~$X'$ (resp.~$Y'$) any smooth and proper variety, over~$k$, which
contains~$X$
(resp.~$Y$) as a dense open subset.

\begin{cor}
\label{cor:maindescent}
Let~$X$ be a smooth, separated and geometrically irreducible variety over a number field~$k$,
such that
$\bark[X]^*=\bark^*$ and $\Pic(X\otimes_k \bark)$ is torsion-free.
\begin{enumerate}
\itemsep 4pt
\item If $Y'(k)$ is dense in $Y'(\A_k)$ for every universal torsor $f:Y\to X$,
then $X'(k)$ is dense in $X'(\A_k)^{\Br_1(X')}$.
\item Suppose that~$X$ is proper and that
for every universal torsor $f:Y\to X$, all rational points of~$Y'$ are $R$\nobreakdash-equivalent.
Then the set $X(k)/R$ is finite and the decomposition~\eqref{eq:decomp}
is the partition of~$X(k)$ into its $R$\nobreakdash-equivalence classes.
\end{enumerate}
\end{cor}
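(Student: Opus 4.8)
The plan is to deduce both parts of the corollary from the combination of Proposition~\ref{prop:decomp} and Theorem~\ref{th:maindescent}, together with the birational invariance properties recorded in Remark~\ref{rmks:conjct}~(iv). The first step is to observe that a universal torsor $f:Y\to X$ satisfies the same hypotheses as $X$, namely $\bark[Y]^*=\bark^*$ and $\Pic(Y\otimes_k\bark)$ torsion-free (in fact, the point of universal torsors is that $\Pic(Y\otimes_k\bark)=0$), so that Theorem~\ref{th:maindescent} applies to $Y$ as well; but more to the point, the statement is phrased in terms of a smooth proper model $Y'$ of $Y$, so we work with $X'$ and $Y'$ throughout. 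Since the Brauer group, the existence of a rational point, the existence of a point in the Brauer--Manin set, and (when $\Br/\Br_0$ is finite, e.g.\ for rationally connected $X'$) the density of rational points in the Brauer--Manin set are all birational invariants of smooth proper varieties, one can freely replace $X'$ by any chosen smooth compactification and likewise for $Y'$; moreover, for the algebraic Brauer--Manin set $X'(\A_k)^{\Br_1(X')}$ one should check (or cite from Colliot-Thélène--Sansuc) that it too only depends on the birational class, which follows because $\Br_1(X')/\Br_0(X')$ embeds into $H^1(k,\Pic(X'\otimes_k\bark))$, a birational invariant.

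For part (1), first I would note that $X'(\A_k)^{\Br_1(X')}$ is by definition a subset of $X'(\A_k)$, and that restricting to the open dense $X\subseteq X'$ one has $X'(\A_k)^{\Br_1(X')}=X(\A_k)^{\Br_1(X)}$ as subsets of $X'(\A_k)$ modulo the usual identification of adelic points (here one uses that $X(\A_k)$ is dense in $X'(\A_k)$ and that $\Br_1(X')$ maps into $\Br_1(X)$); then Theorem~\ref{th:maindescent} gives $X(\A_k)^{\Br_1(X)}=\bigcup_f f(Y(\A_k))$, the union being finite over those $f$ with $Y(\A_k)\neq\emptyset$. Take an adelic point $(P_v)\in X'(\A_k)^{\Br_1(X')}$; after shrinking it slightly it lies in $X(\A_k)^{\Br_1(X)}$, hence in $f(Y(\A_k))$ for some universal torsor $f:Y\to X$, i.e.\ lifts to an adelic point of $Y$, a fortiori of $Y'$. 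By hypothesis $Y'(k)$ is dense in $Y'(\A_k)$, so we may approximate this lift by a rational point $y\in Y'(k)$; if $y$ happens to lie in the open set $Y$, then $f(y)\in X(k)\subseteq X'(k)$ approximates $(P_v)$, and the only subtlety is to ensure $y$ can be chosen in $Y$ rather than on the boundary, which is handled by the openness of $Y(\A_k)$-type conditions together with the fact that one only needs approximation at finitely many places while $Y$ is dense in $Y'$ — more carefully, one approximates in $Y'(\A_k)$ closely enough at the finitely many places where a condition is imposed, which forces $y$ into $Y$ at those places, and since $Y$ is open dense one can always further adjust. Thus $X'(k)$ is dense in $X'(\A_k)^{\Br_1(X')}$, as desired.

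For part (2), assume $X$ (hence $X'$, being proper and birational to $X$) is proper and that for every universal torsor $f:Y\to X$ all rational points of $Y'$ are $R$-equivalent. By Theorem~\ref{th:maindescent} there are only finitely many $f:Y\to X$ with $Y(\A_k)\neq\emptyset$, a fortiori only finitely many with $Y'(k)\neq\emptyset$ (since $Y'(k)\subseteq Y'(\A_k)=Y(\A_k)$ as $X$ hence $Y$ is proper — wait, $Y$ need not be proper, but $Y(\A_k)\neq\emptyset$ is necessary for $Y'(k)\neq\emptyset$). The decomposition~\eqref{eq:decomp}, $X(k)=\coprod_f f(Y(k))$, thus has only finitely many nonempty terms, and each nonempty $f(Y(k))$ is the image under $f$ of $Y(k)\subseteq Y'(k)$, which is a single $R$-equivalence class of $Y'$ by hypothesis; since $f:Y\to X$ is a morphism of $k$-varieties it sends $R$-equivalent points to $R$-equivalent points, so $f(Y(k))$ is contained in a single $R$-equivalence class of $X$. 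Conversely, by the second assertion of Proposition~\ref{prop:decomp}, every $R$-equivalence class of $X(k)$ is contained in some $f(Y(k))$; combining the two inclusions, each $f(Y(k))$ is exactly one $R$-equivalence class of $X(k)$, and~\eqref{eq:decomp} is precisely the partition of $X(k)$ into $R$-equivalence classes. In particular $X(k)/R$ is finite, being in bijection with the finite set of universal torsors $f$ with $f(Y(k))\neq\emptyset$. The main obstacle, and the place requiring genuine care rather than formal manipulation, is part (1): one must be attentive to the passage between $X$ and its compactification $X'$ — both in identifying $X'(\A_k)^{\Br_1(X')}$ with $X(\A_k)^{\Br_1(X)}$ and in guaranteeing that the approximating rational point of $Y'$ produced by the density hypothesis actually lands in the open locus $Y$ so that $f$ can be applied to it.
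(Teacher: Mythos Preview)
Your overall strategy matches the paper's: deduce both parts from Proposition~\ref{prop:decomp} and Theorem~\ref{th:maindescent}. Part~(2) is essentially correct; the only point that deserves a word is that $f:Y\to X$ is not a morphism from the \emph{proper} variety~$Y'$, so to conclude that $R$\nobreakdash-equivalent points of~$Y'$ map to $R$\nobreakdash-equivalent points of~$X$ one should first replace~$Y'$ by a smooth proper model on which the rational map to~$X$ becomes a morphism (using that $R$\nobreakdash-equivalence is a birational invariant of smooth proper varieties). This is routine.

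Part~(1), however, contains a genuine gap, and it is precisely the one the paper singles out. You assert that ``restricting to the open dense $X\subseteq X'$ one has $X'(\A_k)^{\Br_1(X')}=X(\A_k)^{\Br_1(X)}$'', justifying this by the density of $X(\A_k)$ in $X'(\A_k)$ and the existence of a restriction map $\Br_1(X')\to\Br_1(X)$. This is incorrect: the restriction map need not be surjective, so an adelic point of~$X$ orthogonal to the image of $\Br_1(X')$ has no reason to be orthogonal to all of $\Br_1(X)$. What is true, and what is actually needed, is that $X(\A_k)^{\Br_1(X)}$ is \emph{dense} in $X'(\A_k)^{\Br_1(X')}$. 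This is a theorem of Harari (see \cite[Th.~1.4]{ctbudapest}, sometimes called the ``formal lemma''), and it is not a formality: one must show that the extra Brauer classes on the open~$X$, which are ramified along $X'\setminus X$, do not cut down the Brauer--Manin set too much. The paper explicitly invokes this result for non-proper~$X$. Your argument as written does not supply it, and the justification you give in its place is wrong.

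The second subtlety you flag, about the approximating point $y\in Y'(k)$ possibly falling outside~$Y$, is a red herring once one replaces~$Y'$ by a smooth proper model admitting a morphism to~$X'$; then any $y\in Y'(k)$ maps to $X'(k)$ and continuity takes care of the approximation.
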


When~$X$ is proper,
Corollary~\ref{cor:maindescent}
is an immediate consequence
of Proposition~\ref{prop:decomp}
and Theorem~\ref{th:maindescent}.  For non-proper~$X$, one also needs a theorem of Harari
according to which $X(\A_k)^{\Br_1(X)}$
is dense in $X'(\A_k)^{\Br_1(X')}$
(see~\cite[Th.~1.4]{ctbudapest}).

According to a conjecture of Colliot-Thélène and Sansuc,
the set~$Y'(k)$ should be dense in $Y'(\A_k)$ for every universal torsor $f:Y\to X$
as soon as~$X$ is proper and geometrically rational (see~\cite[\textsection4]{cttoulouse}).
We note that assuming~$X$ to be proper and rationally connected would not be enough, here,
since $X(\A_k)^{\Br(X)}$ can prevent~$X(k)$ from being
dense in $X(\A_k)^{\Br_1(X)}$ for such varieties; see
Remark~\ref{rmks:conjct}~(vii).
It is an open question whether the rational points of~$Y'$ are all $R$\nobreakdash-equivalent
whenever~$Y$ is a universal torsor over a smooth, proper and  geometrically rational surface.
For universal torsors over smooth, proper and geometrically rational threefolds, this can fail
(see~\cite[\textsection5]{ctsanpremieredescente}).

\begin{rmk}
\label{rk:descentandopen}
Given~$X$ and~$k$ as
in Theorem~\ref{th:maindescent},
one can apply
descent theory to~$X$ as we did above, but one can also apply it to~$X'$.
By Corollary~\ref{cor:maindescent}~(1),
we conclude that~$X'(k)$ is dense in $X'(\A_k)^{\Br_1(X')}$
as soon as
 weak approximation holds
 \emph{either}
for every universal torsor over~$X$ 
 \emph{or}
for every universal torsor over~$X'$.
This raises the question of the relation between these two conditions.
In fact, it is possible to prove that if every universal torsor
over~$X$ satisfies weak approximation, then so does every universal torsor over~$X'$.
In other words,
as far as rational points are concerned,
one might as well resort to proper descent (descent theory for proper varieties, as it was originally envisaged)
in any situation
covered by
 its more recent counterpart for varieties
subject only
to the assumption $\bark[X]^*=\bark^*$.
What, then, is the point of allowing a non-proper~$X$?
For descent theory to be applicable, one needs a way to identify the universal
torsors as varieties, at least up to a birational equivalence.
This requires, among others,
a detailed knowledge of~$\Pic(X\otimes_k \bark)$ as a Galois module (together with explicit generators, most of the time), see~\textsection\ref{subsubsec:localdescription} below.
In many situations, the variety~$X$ is given and a smooth compactification $X \subseteq X'$ is only known to exist by Hironaka's theorem; in such cases $\Pic(X'\otimes_k \bark)$ can be much more difficult to describe than $\Pic(X\otimes_k \bark)$.
Descent theory for open varieties was used
for the first time in~\cite{ctskodescent}.
\end{rmk}

\subsubsection{Local description of the universal torsors}
\label{subsubsec:localdescription}

Let~$k$ and~$X$ be as at the beginning of~\textsection\ref{subsubsec:defmainstatements}.
Let $U\subset X$ be a dense open subset, small enough that $\Pic(U \otimes_k \bark)=0$.
There are a torsor $T^1_U$ under an algebraic torus $T^0_U$ over~$k$
and a morphism $u:U \to T^1_U$ through which any morphism from~$U$
to a torsor under a torus over~$k$ factors uniquely.
Concretely, if we let  $\hat T_U^0=\bark[U]^*/\bark^*$,
then $T_U^0=\Homrond(\hat T_U^0,\Gm)$
and $T_U^1(\bark)$
is the set of retractions
of the projection $\bark[U]^* \to \bark[U]^*/\bark^*$
(any two such retractions indeed differ by a unique homomorphism $\hat T_U^0\to \bark^*$),
see~\cite[Lemma~2.4.4]{skobook}.
Let us denote by $\hat Q \subset \Div(X \otimes_k \bark)$ the subgroup of divisors
supported on $(X \setminus U)\otimes_k\bark$.
The exact sequence of torsion-free Galois modules
\begin{align}
\xymatrix{
0 \ar[r] & \hat T_U^0 \ar[r] & \hat Q \ar[r] & \hat T \ar[r] & 0
}
\end{align}
 gives rise, dually, to an exact sequence of algebraic tori
\begin{align}
\xymatrix{
1 \ar[r] & T \ar[r] & Q \ar[r] & T^0_U \ar[r] & 1\rlap{\text{.}}
}
\end{align}
We can view~$Q$ as a torsor under~$T$, over~$T^0_U$.

\begin{thm}[Colliot-Thélène and Sansuc~\cite{ctsandescent2}]
\label{th:localdesc}
Let $f:Y\to X$ be a universal torsor.
There exists $b \in T^1_U(k)$ such that $f^{-1}(U)\to U$
is isomorphic, as a torsor,
to the pull-back of $Q\to T^0_U$
by the morphism $U \to T^1_U \simeq T^0_U$
obtained by composing~$u$ with the trivialisation of~$T^1_U$ determined by~$b$.
\end{thm}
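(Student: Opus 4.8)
The plan is to restrict the whole situation to the dense open subset~$U$ and to exploit that, since $\Pic(U\otimes_k\bark)=0$, every torsor under~$T$ over~$U$ becomes trivial after base change to~$\bark$. Write $f_U\colon f^{-1}(U)\to U$ for the restriction of the given universal torsor; it is again a torsor under~$T$, so it suffices to prove that its class $[f_U]\in H^1(U,T)$ coincides with the class of $\phi_b^*Q$ for a suitable~$b$, where $\phi_b\colon U\to T^0_U$ denotes the composite of~$u$ with the trivialisation of~$T^1_U$ determined by~$b$. The first step is to compute $H^1(U,T)$. The Hochschild--Serre spectral sequence for~$T$ and the cover $U\otimes_k\bark\to U$, together with the vanishing $H^1(U\otimes_k\bark,T)=\Hom(\hat T,\Pic(U\otimes_k\bark))=0$, gives a canonical isomorphism $H^1(U,T)\isoto H^1(k,\Hom(\hat T,\bark[U]^*))$. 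Pushing the exact sequence $0\to\bark^*\to\bark[U]^*\to\hat T^0_U\to0$ through the exact functor $\Hom(\hat T,-)$ and taking cohomology then yields an exact sequence
\[
H^1(k,T)\to H^1(U,T)\to \mathrm{Ext}^1_{G_k}(\hat T,\hat T^0_U)\to H^2(k,T)\rlap{\text{,}}
\]
in which $H^1(k,\Hom(\hat T,\hat T^0_U))$ has been identified with $\mathrm{Ext}^1_{G_k}(\hat T,\hat T^0_U)$, again because~$\hat T$ is $\Z$-free. So a torsor under~$T$ over~$U$ that is split over~$\bark$ possesses two invariants: an extension class of~$\hat T$ by~$\hat T^0_U$ and, once that is fixed, a residual class living in the image of $H^1(k,T)$.

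The second step is to identify both invariants for $[f_U]$ and for $[\phi_{b_0}^*Q]$ and to see that they agree up to the residual $H^1(k,T)$-term. Since $f\colon Y\to X$ is \emph{universal}, its class in $H^1(X\otimes_k\bark,T)^{G_k}=\End_{G_k}(\hat T)$ is the identity~$\alpha$; tracing~$\alpha$ through the restriction map $H^1(X,T)\to H^1(U,T)$ and through the diagram identifying the universal morphism $u\colon U\to T^1_U$ with the tautological data attached to~$\bark[U]^*$, one finds that the $\mathrm{Ext}^1_{G_k}(\hat T,\hat T^0_U)$-invariant of $[f_U]$ is the class of the fundamental extension $0\to\hat T^0_U\to\hat Q\to\hat T\to0$. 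The same computation applied to $\phi^*Q\to U$, for~$\phi$ obtained from~$u$ by composing with any trivialisation $T^1_U\isoto T^0_U$, shows that $\phi^*Q$ carries the \emph{same} $\mathrm{Ext}$-invariant: this is precisely what the construction of $Q\to T^0_U$ by dualising $0\to\hat T^0_U\to\hat Q\to\hat T\to0$ achieves. Fixing one trivialisation~$b_0$ and the associated~$\phi_{b_0}$, it follows that $[f_U]-[\phi_{b_0}^*Q]$ dies in $\mathrm{Ext}^1_{G_k}(\hat T,\hat T^0_U)$, hence is the image of some $c\in H^1(k,T)$; equivalently, it is the pull-back along $p\colon U\to\Spec(k)$ of a torsor under~$T$ over~$\Spec(k)$.

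The third step is to absorb~$c$ into the choice of~$b$. Replacing~$b_0$ by another $b\in T^1_U(k)$ multiplies~$\phi_{b_0}$, pointwise for the group law of~$T^0_U$, by the constant morphism with value some $t(b)\in T^0_U(k)$; since $Q\to T^0_U$ is an extension of algebraic groups, the pull-back of its class along the multiplication $T^0_U\times_k T^0_U\to T^0_U$ is the sum of the two partial pull-backs, whence $[\phi_b^*Q]=[\phi_{b_0}^*Q]+p^*\partial(t(b))$, where $\partial\colon T^0_U(k)\to H^1(k,T)$ is the connecting map of $1\to T\to Q\to T^0_U\to1$. Here the only genuinely arithmetic ingredient enters: $\hat Q$ is a \emph{permutation} $G_k$-module, a $\Z$-basis being furnished by the $\bark$-irreducible components of the prime divisors of~$X$ lying in $X\setminus U$, which $G_k$ merely permutes; hence~$Q$ is quasi-trivial and $H^1(k,Q)=0$ by Hilbert's theorem~$90$ and Shapiro's lemma. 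Therefore~$\partial$ is surjective, so there is a~$b$ with $p^*\partial(t(b))=c$, and for that~$b$ one has $[f_U]=[\phi_b^*Q]$, i.e.\ $f^{-1}(U)\to U$ and $\phi_b^*(Q\to T^0_U)$ are isomorphic as torsors under~$T$. That a $b\in T^1_U(k)$ exists at all also emerges here: the obstruction $[T^1_U]\in H^1(k,T^0_U)$ lies in the image of $H^1(k,Q)=0$, once one checks --- by the same functoriality as in the second step --- that its image in $H^2(k,T)$, under the connecting map of $1\to T\to Q\to T^0_U\to1$, is the obstruction to lifting~$\alpha$ along $H^1(X,T)\to H^1(X\otimes_k\bark,T)^{G_k}$ in~\eqref{eq:univt}, which vanishes because a universal torsor is assumed to exist.

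The step I expect to be the genuine obstacle is the second one: turning the universality of~$Y$ (its class being the identity~$\alpha$) into the concrete statement that the $\mathrm{Ext}$-invariant of $[f_U]$ is the class of $0\to\hat T^0_U\to\hat Q\to\hat T\to0$, and matching this against the corresponding invariant of $\phi_{b_0}^*Q$. This is pure diagram chasing, but it requires setting up compatibly the definition of~$\alpha$ via $H^1(X\otimes_k\bark,\Homrond(\hat T,\Gm))=\End(\hat T)$, the universal property of $u\colon U\to T^1_U$, the localisation sequence identifying $\ker(\hat Q\to\hat T)$ with~$\hat T^0_U$, and the functoriality of the Hochschild--Serre sequences both under $U\hookrightarrow X$ and under $T\to Q\to T^0_U$. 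Once these are aligned, the cohomological computation of the first step and the group-law bookkeeping of the third are routine.
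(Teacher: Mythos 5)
The paper itself does not prove Theorem~\ref{th:localdesc}; it simply cites Colliot-Th\'el\`ene and Sansuc, so there is no in-text proof to compare against. Judged on its own, your proposal is a correct and complete plan, and it runs along the same lines as the original proof: compute $H^1(U,T)\cong H^1(k,\Hom(\hat T,\bark[U]^*))$ via Hochschild--Serre and the vanishing of $\Pic(U\otimes_k\bark)$; measure both $[f^{-1}(U)]$ and $[\phi_{b_0}^*Q]$ by their images in $\mathrm{Ext}^1_{G_k}(\hat T,\hat T^0_U)$, which in both cases is the class of the fundamental extension $0\to\hat T^0_U\to\hat Q\to\hat T\to 0$; and absorb the remaining difference, which lives in the image of $H^1(k,T)$, into the choice of $b$ using the surjectivity of $\partial\colon T^0_U(k)\to H^1(k,T)$, which you correctly deduce from $H^1(k,Q)=0$ since $\hat Q$ is a permutation module. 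The identification of the two $\mathrm{Ext}^1$-invariants is indeed the only genuinely technical point, and you are right that it comes down to aligning the two Hochschild--Serre sequences (for $X$ and for $U$) with the connecting homomorphism of $0\to\hat T^0_U\to\hat Q\to\hat T\to 0$; the needed compatibility is the statement that the composite $H^1(X,T)\to H^1(U,T)\cong H^1(k,\Hom(\hat T,\bark[U]^*))\to H^1(k,\Hom(\hat T,\hat T^0_U))$ agrees with the composite $H^1(X,T)\to\Hom_{G_k}(\hat T,\hat T)\xrightarrow{\,\partial\,}\mathrm{Ext}^1_{G_k}(\hat T,\hat T^0_U)$, and this follows from the four-term exact sequence $0\to\bark^*\to\bark[U]^*\to\hat Q\to\hat T\to 0$. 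One small organizational remark: the existence of a point $b_0\in T^1_U(k)$, which you invoke in your second step, is only established at the end of your third step. The argument you give there (the image of $[T^1_U]$ in $H^2(k,T)$ equals the transgression of $\alpha$ in~\eqref{eq:univt}, which vanishes since a universal torsor exists, whence $[T^1_U]$ lifts to $H^1(k,Q)=0$) is self-contained and does not depend on the second step, so it should simply be placed first.
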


\subsubsection{Examples}
\label{subsubsec:examples}

The simplest examples in which descent is successful are smooth compactifications of torsors under tori and certain conic bundle surfaces.

\newcommand{\citectsextore}{\cite[\textsection III]{ctsandescent1}}
\begin{example}[\citectsextore]
Let~$X$ be a smooth, proper variety over a number field~$k$, containing a torsor under a torus as a dense
open subset.  Let $U \subset X$ be this open subset.
By its very definition, the map $u:U\to T^1_U$ is an isomorphism.
On the other hand, the $\Z$\nobreakdash-module~$\hat Q$ possesses a basis stable under the action of~$G_k$,
hence~$Q$ is an open subset of an affine space.
We conclude, thanks to Theorem~\ref{th:localdesc},
that universal torsors over~$X$ are birationally equivalent, over~$k$, to a projective space.
By Corollary~\ref{cor:maindescent}, it follows that $X(k)$ is dense in $X(\A_k)^{\Br(X)}$ and
that $X(k)/R$ is finite.
\end{example}

\newcommand{\citectsextorebis}{open variant of \cite[\textsection IV]{ctsandescent1}, \cite[\textsection1]{bms}}
\begin{example}[\citectsextorebis]
\label{ex:univtorsorconicbundle}
Let~$L/k$ be a separable quadratic extension.
Let $P(t)=c\prod_{i=1}^d (t-a_i) \in k[t]$
be non-constant, with pairwise distinct $a_1,\dots,a_d \in k$.
Let~$X$ be the smooth affine surface
defined by the equation $N_{L/k}(x)=P(t)$
and let $U \subset X$ be the open subset defined by $\prod_{i=1}^{d-1}(t-a_i)\neq 0$.
One readily checks
that $\bark[X]^*=\bark^*$, that $\Pic(X\otimes_k\bark)$ is torsion-free,
that $\Pic(U\otimes_k\bark)=0$,
that $(t-a_i)_{1\leq i\leq d-1}$ forms a $\Z$\nobreakdash-basis of $\bark[U]^*/\bark^*$
and that the morphisms $U \to T^1_U \simeq T^0_U$ appearing in Theorem~\ref{th:localdesc}
are the morphisms $U \to \Gm^{d-1}$, $(x,y,t)\mapsto (b_i(t-a_i))_{1\leq i\leq d-1}$
for $b_1,\dots,b_{d-1}\in k^*$.
It is then easy to deduce from Theorem~\ref{th:localdesc} that any universal torsor
over~$X$ is birationally equivalent to the affine variety defined by the system
\begin{align}
\label{eq:univtorsor}
N_{L/k}(x_i)=b_i(t-a_i)\qquad (1 \leq i \leq d)
\end{align}
for some $b_1,\dots,b_d\in k^*$ such that $\prod_{i=1}^d b_i=c$.
By a stroke of luck, this is precisely a variety whose arithmetic
is accounted for, when $k=\Q$, by additive combinatorics:
Example~\ref{ex:bms}
 applies
(take $s=2$, $r=d+1$, $L_1=\dots=L_r=L$,
 $f_i=b_i(u_1-a_iu_2)$ for $i\in\{1,\dots,d\}$
and $f_{d+1}=u_2$).  We conclude, by Corollary~\ref{cor:maindescent},
that if $k=\Q$ and~$X'$ denotes a smooth and proper model of~$X$,
then~$X'(\Q)$ is dense in $X'(\A_\Q)^{\Br(X')}$.
\end{example}

This is a striking example of a fruitful combination of the descent method with an analytic
method, namely additive combinatorics.
Smooth and proper models of the varieties 
dealt with in
Example~\ref{ex:univtorsorconicbundle}
are conic bundle surfaces over~$\P^1_\Q$ with~$d$, if~$d$ is even, or~$d+1$, if~$d$ is odd,
singular geometric fibres.  It is remarkable that~$d$ is allowed to be arbitrarily
large: until the paper~\cite{bms}, even
the Zariski density of~$X(\Q)$ in~$X$ was not known for a single minimal conic bundle surface over~$\P^1_\Q$ with more than~$6$
singular geometric fibres
(see Remark~\ref{rmks:conjct}~(iii)).

Example~\ref{ex:univtorsorconicbundle} can be extended in many directions.

Browning, Matthiesen and Skorobogatov~\cite{bms} prove
that for any conic bundle surface $X \to \P^1_\Q$ whose
singular fibres lie over rational points of~$\P^1_\Q$,
with~$X$ smooth and proper over~$\Q$,
the set~$X(\Q)$
is dense in $X(\A_\Q)^{\Br(X)}$.  The argument
is exactly the same as in Example~\ref{ex:univtorsorconicbundle}, with a reduction to Example~\ref{ex:bms} too,
the only difference being that analysing the universal torsors requires a bit more care.

Eliminating~$t$ from~\eqref{eq:univtorsor}
leads to the intersection of $d-1$ quadrics given by
 $N_{L/k}(x_1)/b_1+a_1=\dots=N_{L/k}(x_d)/b_d+a_d$.
The surfaces of Example~\ref{ex:univtorsorconicbundle}
with~$d=3$
were first explored by Châtelet (see~\cite{chatelet}).
A simultaneous study of
these surfaces
(and, more generally, of the surfaces of Example~\ref{ex:univtorsorconicbundle}
with~$P(t)$ any separable polynomial of degree~$3$ or~$4$,
smooth and proper models of which are
now called Châtelet surfaces)
and of intersections of two quadrics
was carried out by Colliot-Thélène, Sansuc and Swinnerton-Dyer~\cite{cssI}, \cite{cssII},
who
proved, using Corollary~\ref{cor:maindescent},
 that for any Châtelet surface~$X$ over a number field~$k$,
the set~$X(k)$ is dense in $X(\A_k)^{\Br(X)}$, the set $X(k)/R$
is finite and the decomposition~\eqref{eq:decomp}
is the partition of~$X(k)$ into its $R$\nobreakdash-equivalence classes
(which leads to an explicit description of the $R$\nobreakdash-equivalence classes;
see \cite[Th.~8.8]{cssII} and \cite[p.~446]{ctsansuc} for some examples).
To ensure that the universal torsors satisfy the hypotheses
of Corollary~\ref{cor:maindescent},
they applied the fibration
 method, in the form of Proposition~\ref{prop:babycase} below;
this is an instance of a combination of the descent and fibration methods.
We note that the surface of Example~\ref{ex:isko} is a Châtelet surface.
It is still unknown whether the rational points of smooth and proper models
of the surfaces
of Example~\ref{ex:univtorsorconicbundle}
fall into finitely many
 $R$\nobreakdash-equivalence classes
when $d\geq 5$,
even if~$k=\Q$.

One can also replace, in Example~\ref{ex:univtorsorconicbundle},
the quadratic extension~$L/k$ with an arbitrary finite separable extension.
In general, no explicit smooth compactification of the resulting smooth affine variety~$X$ is known
(an exception being
the case when~$L/k$ is cyclic and either~$d$ or~$d+1$ is divisible by~$[L:k]$,
see~\cite{vavcompactifications});
thus, open descent, possibly on a partial compactification of~$X$, seems the only way out.
As a consequence, nothing is known on $R$\nobreakdash-equivalence when $[L:k]\geq 4$ and $d\geq 2$ (recall that Corollary~\ref{cor:maindescent}~(2) requires properness).

As was first noticed by Salberger,
the variety defined by the system~\eqref{eq:univtorsor} falls within the range of
applicability of the circle method,
for any finite extension $L/k$ of number fields, when $d=2$
(see~\cite[p.~331]{cttoulouse}).
By combining an open descent with the circle method,
Colliot-Thélène, Harari, Heath-Brown, Schindler, Skorobogatov and Swarbrick Jones
were thus able to prove that for any finite extension $L/k$ of number fields, 
any $m,n\in \N$
and any $c \in k^*$,
if~$X$ denotes a smooth and proper model of the affine variety defined by the equation $N_{L/k}(x)=ct^m (t-1)^n$,
then~$X(k)$ is dense in $X(\A_k)^{\Br(X)}$
(see~\cite{hbsko},
\cite{ctbudapest}, \cite{cthasko},
\cite{swarbrickjones}, \cite{schindlerskorobogatov}).
Nevertheless, even when~$L/k$ is biquadratic,
the absence of an explicit smooth and proper model makes it difficult
to determine whether $X(\A_k)^{\Br(X)}\neq\emptyset$
(see~\cite[\textsection2]{cthasko}, \cite{weinormtori}).

Derenthal, Smeets and Wei~\cite{dsw} show that
for any number field~$L$ and any quadratic polynomial $P(t)\in\Q[t]$,
the set~$X(\Q)$ is dense in $X(\A_\Q)^{\Br(X)}$
if~$X$ denotes a smooth and proper model
of the affine variety defined by $N_{L/\Q}(x)=P(t)$.
Their proof consists in applying Theorem~\ref{th:localdesc} and
Corollary~\ref{cor:maindescent} to this affine variety:
the universal torsors
turn out to be
precisely the varieties studied by Browning and Heath-Brown~\cite{bhbquadratic} by sieve methods (using ideas from the circle method).

Finally,
by improving
the additive
combinatorics arguments of~\cite{bms},
Browning and Matthiesen~\cite{browningmatthiesen}
were able to extend the assertion
of Example~\ref{ex:bms} from quadratic to arbitrary number fields~$L_i$.
Combining this with an open descent,
 they proved that
for any number field~$L$ and any $P(t) \in \Q[t]$
which splits as a product of linear polynomials over~$\Q$,
if~$X$ denotes a smooth and proper model of the affine variety defined by $N_{L/\Q}(x)=P(t)$,
then~$X(\Q)$ is dense in $X(\A_\Q)^{\Br(X)}$.
Using the same analytic input and a more refined open descent,
Skorobogatov~\cite{skorodescenttoric}
showed
that~$X(\Q)$ is dense in $X(\A_\Q)^{\Br(X)}$
whenever~$X$ is a smooth and proper variety endowed with a morphism to~$\P^1_\Q$ whose
singular fibres
lie over rational points of~$\P^1_\Q$
and whose generic fibre is birationally equivalent to a torsor under a constant torus
(\emph{i.e.}, a torus defined over~$\Q$).

\subsubsection{Integral points}
\label{intpoints:descent}

Theorem~\ref{th:maindescent} and Theorem~\ref{th:localdesc} were first applied to
study integral points on non-proper varieties
by Wei~\cite{weitorus},
who gave a positive answer to Question~\ref{q:codimdeux} for some varieties containing a torus
as a dense open subset.
Cao and Xu
 used descent to
show
that the conclusion of Theorem~\ref{th:borovoidemarche}
also holds for
 equivariant partial
compactifications of the homogeneous spaces which appear in its statement
(see~\cite{caoxugroupic}
and \cite{caoactiongroupelineaire},
which also relies on the fibration method).
Derenthal and Wei~\cite{derenthalwei}
applied
Theorem~\ref{th:maindescent} and Theorem~\ref{th:localdesc}
to analyse integral points on affine varieties defined by certain norm equations $N_{L/k}(x)=P(t_1,\dots,t_s)$.
Finally, descent theory was extended
 by Harari and Skorobogatov~\cite{harskodescentopen},
by Wei~\cite{weiopendescent},
and by Cao, Demarche and Xu~\cite{caoxumixing},
to smooth and geometrically irreducible varieties~$X$ such that
$\bark[X]^*\neq\bark^*$; this should prove useful for further applications
of descent to integral points.

\subsection{The fibration method}
\label{subsec:fibration}

We fix, once and for all, a smooth, proper and geometrically irreducible variety~$X$ over a number field~$k$.
The fibration method seeks to exploit the structure of a fibration $f:X\to Y$
to deduce information on the arithmetic of~$X$
from information on the arithmetic of~$Y$ and of many fibres of~$f$.
The reader will find in~\cite[\textsection2]{cttiruchirapalli} a detailed discussion of
the state of the art in~1998.

\subsubsection{Set-up}

The most basic situation in which such issues arise is that of a conic bundle surface $f:X\to \P^1_k$.
In this case, the smooth
fibres and the base satisfy weak approximation.  Nevertheless, as we have seen in Example~\ref{ex:isko}, it may happen
that $X(k)=\emptyset$ while $X(\A_k)\neq\emptyset$.
The weak approximation property is therefore not compatible with fibrations, which indicates that positive answers to the fibration
problem cannot be too simple.

Going back to an arbitrary~$X$,
one can still ask whether the more refined statements of
Conjecture~\ref{conj:ctrc}, Conjecture~\ref{conj:e1} or Conjecture~(E)
are compatible with fibrations.
(We leave  $R$\nobreakdash-equivalence aside, as nothing is known,
over number fields, about its behaviour with respect to fibrations.)
Let us assume, for simplicity, that $Y=\P^1_k$.
Multiple fibres of~$f$ and rational points of~$X$  are known to play against each other:
for example, as soon as~$f$ possesses at least~$5$ double geometric fibres,
the rational points of~$X$ are contained
in finitely many fibres (see~\cite[Cor.~2.4]{csspaucity};
this is now better understood in the light of Campana's theory of non-classical orbifolds, see~\cite[\textsection4]{campanafibres}).
We therefore assume that every fibre of~$f$ contains an irreducible component of multiplicity~$1$
(an assumption which holds,
in particular,
when the generic fibre of~$f$ is rationally connected; see~\cite{ghs}).
In this case,
letting
$X_c=f^{-1}(c)$,
it makes sense to ask:

\begin{question}
\label{q:fibre}
Under these hypotheses, is
\begin{align}
\label{eq:densesubset}
\bigcup_{c \in \P^1(k)} X_c(\A_k)^{\Br(X_c)}
\end{align}
a dense subset of $X(\A_k)^{\Br(X)}$?
\end{question}

The difficulty of this question increases with the \emph{rank} of~$f$,
defined as follows.

\begin{defn}[Skorobogatov~\cite{skorodescent}]
\label{def:rank}
A scheme of finite type over a field is \emph{split} if it contains a
geometrically
irreducible irreducible component of multiplicity~$1$.  
The set of points of~$\P^1_k$ above which the fiber of~$f$ is not split is a finite closed subset;
its degree over~$k$ (or, in other words, its geometric number of points)
is the \emph{rank} of~$f$.
\end{defn}

The following result,
implicitly stated in~\cite[p.~42]{cssI},
represents the ``baby case'' of the fibration method:
the rank is~$0$ and we ignore $\Br(X_c)$.
It is enough to deduce, for example, the Hasse--Minkowski theorem for quadric hypersurfaces of dimension~$d \geq 2$
from the case $d=2$, by considering pencils of hyperplane sections.

\begin{prop}
\label{prop:babycase}
Let $f:X \to \P^1_k$ be a morphism all of whose fibres are split.
The set
 $\bigcup_{c \in \P^1(k)} X_c(\A_k)$ is dense in $X(\A_k)$.
\end{prop}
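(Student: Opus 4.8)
The plan is to reduce the statement to the following local-to-global mechanism: a point of $X(\A_k)$ lies in the closure of $\bigcup_{c\in\P^1(k)}X_c(\A_k)$ as soon as it lies over an adelic point of $\P^1_k$ whose coordinate can be approximated by a single rational point $c$ such that the fibre $X_c$ has points in every completion near the prescribed local points. So the first step is to fix a point $(P_v)_{v\in\Omega}\in X(\A_k)$, a finite set $S\subset\Omega$, and open neighbourhoods $U_v\ni P_v$ for $v\in S$; we must produce $c\in\P^1(k)$ and an adelic point of $X_c$ inside $\prod_{v\in S}U_v\times\prod_{v\notin S}X(k_v)$. Shrinking the $U_v$, I would arrange that each $U_v$ maps into a small ball $B_v\subset\P^1(k_v)$ around $f(P_v)$ and that $f$ admits a local section over $U_v$, which is possible because $f$ is dominant (indeed all fibres are split, hence $f$ is surjective with geometrically irreducible generic fibre) and $X$ is smooth, so on a Zariski-dense open set $f$ is smooth and one can use the implicit function theorem over $k_v$; for the finitely many $v\in S$ where $P_v$ might be a bad point one first moves $P_v$ slightly inside $U_v$ to a point where $f$ is smooth, which is harmless since smooth points are dense in $X(k_v)$.

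Next I would invoke weak approximation on $\P^1_k$ (the Chinese remainder theorem, essentially) to find $c\in\P^1(k)$ lying in $B_v$ for all $v\in S$; this is immediate. The crux is then to check that the fibre $X_c$ has a $k_v$-point for every $v\in\Omega$ — for $v\in S$ a point inside the chosen $U_v$, and for $v\notin S$ any $k_v$-point at all. For $v\in S$ this follows from the local section constructed above: once $c$ is close enough to $f(P_v)$, the section produces a point of $X_c(k_v)$ inside $U_v$. For $v\notin S$ one argues uniformly: since every fibre of $f$ is split, a standard consequence of the Lang--Weil estimates together with Hensel's lemma shows that $X_c(k_v)\neq\emptyset$ for all $v$ outside a finite set $S'$ of places that is \emph{independent of $c$} — this is exactly the kind of statement recalled at the beginning of \textsection\ref{sec:definitionsandquestions}, applied fibrewise, using that the locus of non-split fibres is empty. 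For the finitely many $v\in S'\setminus S$ one enlarges $S$ from the start to contain $S'$; this costs nothing since $S'$ does not depend on $c$, so we may simply assume $S\supseteq S'$ and then every place is handled.

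Putting these together, for the chosen $c$ we have produced a point of $X_c(\A_k)=\prod_v X_c(k_v)$ whose components for $v\in S$ lie in $U_v$; since $S$ and the $U_v$ were arbitrary, this proves density. The main obstacle is the step ensuring $X_c(k_v)\neq\emptyset$ for almost all $v$ by a bound on the bad places that is \emph{uniform in} $c$: one must know that as $c$ varies over $\P^1(k)$ the reductions of $X_c$ are geometrically irreducible of the expected dimension at all but finitely many places, with the finite set not depending on $c$. This is where the hypothesis that \emph{all} fibres of $f$ are split is used in an essential way — it lets one spread $f$ out to a morphism of schemes of finite type over $\sO_{k,S_0}$ (for some fixed finite $S_0$) all of whose fibres over closed points of the base remain split, so that the Lang--Weil/Hensel argument applies with a uniform constant. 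Everything else is routine: the implicit function theorem over local fields, weak approximation on $\P^1_k$, and bookkeeping with the finite set $S$.
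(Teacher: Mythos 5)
Your argument is correct and is essentially the paper's own proof: spread $f$ out over $\sO_{k,S}$ so that all closed fibres of the model remain split, use Lang--Weil(--Nisnevich) for split varieties plus Hensel's lemma to get $X_c(k_v)\neq\emptyset$ for all $v\notin S$ uniformly in $c$, and handle $v\in S$ by weak approximation on $\P^1_k$ combined with the implicit function theorem. The only presentational difference is that you make explicit the (harmless) need to move $P_v$ into the smooth locus of $f$ before invoking the implicit function theorem.
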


\begin{proof}
Let us spread~$f$ out to a flat
${\sO_{k,S}}$\nobreakdash-morphism
$f':\sX\to \P^1_{\sO_{k,S}}$
for some finite subset $S \subset \Omega$
and fix
 $(P_v)_{v\in\Omega}\in X(\A_k)$.
After enlarging~$S$, we may assume
that~$S$ contains
the places at which we want to approximate~$(P_v)_{v \in \Omega}$,
that the fibres of~$f'$ are split,
and,
by the Lang--Weil--Nisnevich bounds on the number of rational
points of \emph{split} varieties over finite fields, that every closed fibre of~$f'$
contains a smooth rational point.  Such points can be lifted, by Hensel's
lemma, to $k_v$\nobreakdash-points of~$X_c$ for any $c \in \P^1(k)$ and any finite $v\notin S$.
Thus, the proposition will follow
from the existence of $c \in \P^1(k)$ and $(Q_v)_{v \in S} \in \prod_{v \in S} X_c(k_v)$
with~$Q_v$ arbitrarily close to~$P_v$ for each $v\in S$;
but the implicit function theorem provides such~$Q_v$ for any $c \in \P^1(k)$
which is close enough to $(f(P_v))_{v \in S} \in \prod_{v \in S} \P^1(k_v)$.
\end{proof}

\begin{rmk}
\label{rk:goeswrong}
What goes wrong in this proof in the presence of non-split fibres
is that the existence of smooth rational points in the closed fibres of~$f'$,
and hence of $k_v$\nobreakdash-points of~$X_c$ for $v \notin S$ and $c \in \P^1(k)$,
cannot be guaranteed.
Say, for example, that
for some $m \in \P^1(k)$,
the fibre~$X_m$ is integral but not geometrically irreducible,
so that the algebraic closure~$L_m$ of~$k$ in the function field of~$X_m$
is distinct from~$k$.
Suppose the Zariski closures of~$c$ and~$m$ in $\P^1_{\sO_{k,S}}$
meet in a closed point~$z$ which lies above $v \in \Spec(\sO_{k,S})$.
Without additional assumptions on~$f$,
there is then no way to ensure that $X_c(k_v)\neq\emptyset$ unless
the number field~$L_m$ happens to possess a place of degree~$1$ over~$v$.
If~$S$ was chosen large enough at the beginning of the argument,
the existence of such a place is equivalent to $f'^{-1}(z)$ being split.
\end{rmk}

The proof of Proposition~\ref{prop:babycase} ends with an application of weak approximation
on the projective line.
An easy variant based, instead, on the use of strong approximation off one place
on the affine line,
shows that the conclusion of Proposition~\ref{prop:babycase}
remains valid for fibrations of rank~$1$
(see~\cite{skofibrationmethod}).
Taking into account the
Brauer groups of the fibres is more difficult;
still in the case of rank at most~$1$, 
a very elaborate argument
allowed Harari~\cite{harariduke} to answer Question~\ref{q:fibre}
in the affirmative under the additional assumption that the geometric fibre of~$f$ is rationally connected.

The first result towards Question~\ref{q:fibre} for a fibration of rank~$2$ is due to Hasse.

\begin{prop}
\label{prop:hassecase}
Let $f:X \to \P^1_k$ be a morphism
whose fibres above $\P^1_k \setminus \{0,\infty\}$ are split
and whose generic fibre is a product of conics.  If $X(\A_k)\neq\emptyset$, then $X(k)\neq\emptyset$.
\end{prop}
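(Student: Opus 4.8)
The plan is to reduce the existence of a rational point on $X$ to the solubility over $k$ of a single conic that depends on a rational parameter, and then to choose the parameter by combining Dirichlet's theorem with the reciprocity law. This last step is the real substance of the proposition — it is Hasse's idea — so I will describe the reduction quickly and then concentrate on it.

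\emph{A normal form.} The generic fibre of $f$ is a product of conics $C_1 \times \dots \times C_r$ over $K = k(t)$; write $\omega_i = [C_i] \in \Br(K)$, a class of order dividing~$2$. The hypothesis that the fibres of $f$ over $\P^1_k \setminus \{0,\infty\}$ are split means that the conic bundle attached to $\omega_i$ degenerates only over $\{0,\infty\}$, i.e.\ that $\omega_i$ is unramified on $\P^1_k$ outside $\{0,\infty\}$. By Faddeev's exact sequence for $\Br(k(t))$, such an $\omega_i$ can be written $\omega_i = \gamma_i + (u_i,t)$ with $\gamma_i \in \Br(k)$ of order dividing~$2$ and $u_i \in k^*$, its residues at $0$ and at $\infty$ being $u_i$ up to squares. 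For $c \in k^*$ outside some fixed finite set $\Sigma$, the fibre $X_c$ is the product of the smooth conics $C_{i,c}$ over~$k$, where $C_{i,c}$ has class $\omega_i(c) = \gamma_i + (u_i,c)$; any $k$-point of such an $X_c$ is a $k$-point of~$X$. By the Hasse--Minkowski theorem for conics, $C_{i,c}$ acquires a $k$-point as soon as $C_{i,c}(k_v) \neq \emptyset$ for all $v \in \Omega$, and the latter is the Hilbert-symbol condition $(u_i,c)_v = \chi_{i,v}$, where $\chi_{i,v} \in \{\pm 1\}$ is the sign determined by $\inv_v \gamma_i$ and does not depend on~$c$. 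Let $S \subset \Omega$ be a finite set containing the archimedean places, the places above~$2$, and the places at which some $u_i$ or $\gamma_i$ is ramified. Then $\chi_{i,v} = 1$ for $v \notin S$; for $v \notin S$ and $c$ a unit at~$v$ one has $(u_i,c)_v = 1$ automatically; and $\prod_{v \in \Omega}\chi_{i,v} = 1$ for each~$i$, by the reciprocity law~\eqref{eq:sebr} applied to $\gamma_i$.

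\emph{Using the hypotheses.} As in the proof of Proposition~\ref{prop:babycase}, the splitness of the fibres over $\P^1_k \setminus \{0,\infty\}$ together with the Lang--Weil--Nisnevich bounds and Hensel's lemma shows, after enlarging~$S$, that $X_c(k_v) \neq \emptyset$ for every $v \notin S$ and every $c \in \P^1(k_v)$, so that no constraint on~$c$ arises outside~$S$. Fix now $v \in S$. Since $X$ is smooth and proper over~$k$ and $X(k_v) \neq \emptyset$, the analytic manifold $X(k_v)$ has dimension $\dim X$ and cannot be contained in the critical locus of~$f$ (a proper Zariski-closed subset, the generic fibre of~$f$ being smooth); hence $f(X(k_v))$ contains a non-empty $v$-adic open subset of $\P^1(k_v)$, and in particular a point $c_v \in k_v^*$, avoiding~$\Sigma$, with $X_{c_v}(k_v) \neq \emptyset$. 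Thus $(u_i,c_v)_v = \chi_{i,v}$ for all~$i$: the finitely many local conditions on~$c$ at~$v$ are simultaneously satisfiable.

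\emph{Choosing $c$ (the crux).} It remains to find a single $c \in k^*$ with $(u_i,c)_v = \chi_{i,v}$ for all~$i$ and all~$v$; the local solubility just established, together with the product formula $\prod_v \chi_{i,v} = 1$, is exactly what makes this possible. I would first pick $c' \in k^*$ with $c' \equiv c_v$ in $k_v^*/(k_v^*)^2$ for every $v \in S$ (weak approximation), and then, by the generalised Dirichlet theorem (Chebotarev for a ray class field), a prime $\mathfrak q$ of~$k$ with $\mathfrak q \notin S$, $\mathfrak q \nmid c'$, such that $\mathfrak q$ splits completely in $k(\sqrt{u_1},\dots,\sqrt{u_r})$ and a suitable generator of~$\mathfrak q$ is a square in $k_v^*$ for every $v \in S$, with the place below~$\mathfrak q$ avoiding~$\Sigma$. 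Setting $c = c'\mathfrak q$, one gets $(u_i,c)_v = (u_i,c_v)_v = \chi_{i,v}$ for $v \in S$, $(u_i,c)_{\mathfrak q} = 1 = \chi_{i,\mathfrak q}$ because $u_i$ is a square modulo~$\mathfrak q$, and $(u_i,c)_w = 1 = \chi_{i,w}$ at every remaining place~$w$; hence $C_{i,c}$ is everywhere locally soluble, so soluble over~$k$ by Hasse--Minkowski, and $X_c(k) \neq \emptyset$. I expect this last paragraph to be the main obstacle: over~$\Q$ it is the classical interplay between Dirichlet's theorem on primes in arithmetic progressions and the quadratic reciprocity law. By comparison, the normal form of the first step is standard (though it needs a little care with the fibres over $0$ and~$\infty$), and the argument with $X(k_v)$ in the second step is routine.
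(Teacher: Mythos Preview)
Your overall strategy is the paper's, and you correctly identify Dirichlet together with reciprocity as the heart of the matter, but the construction in your ``crux'' paragraph has a real gap. When you choose $c'\in k^*$ by weak approximation alone, you have no control over its prime factorisation outside~$S$; if $w\notin S$, $w\neq\mathfrak q$ and $v_w(c')\neq 0$, then $c=c'\pi$ is not a $w$\nobreakdash-adic unit, its reduction lands at $0$ or $\infty$ where the closed fibre need not be split, and your Lang--Weil step does not apply. Nor does your Hilbert-symbol claim $(u_i,c)_w=1$ follow, since the tame symbol then depends on whether $u_i$ is a square at~$w$. (Relatedly, your assertion in the second step that $X_c(k_v)\neq\emptyset$ for \emph{every} $c\in\P^1(k_v)$ when $v\notin S$ is already too strong: it only holds when $c$ reduces away from $\{0,\infty\}$.) Your Chebotarev condition on~$\mathfrak q$ does nothing for these stray primes of~$c'$.

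The paper's proof sidesteps this by producing~$c$ in a single stroke: Dirichlet's theorem in its class-field-theoretic form yields $t_0\in k^*$ close to $t_v$ for finite $v\in S$, with prescribed signs at the real places, and which is a unit at \emph{every} finite place outside~$S$ except one, say~$v_0$. No condition at all is imposed on~$v_0$. One then knows $X_c(k_v)\neq\emptyset$ for all $v\neq v_0$, and deduces $X_c(k_{v_0})\neq\emptyset$ from the reciprocity law applied to each conic factor (a $2$\nobreakdash-torsion Brauer class whose local invariants must sum to zero). This ``sacrificial prime'' trick---letting reciprocity \emph{supply} solubility at~$v_0$ rather than forcing it by an extra splitting condition---is exactly Hasse's idea and is what makes the argument both short and correct. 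Your Faddeev normal form is fine but unnecessary once one argues this way.
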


\begin{proof}[Proof, after Hasse]
As above,
we spread~$f$ out,
for a large enough finite subset $S \subset \Omega$,
to a flat
${\sO_{k,S}}$\nobreakdash-morphism
$f':\sX\to \P^1_{\sO_{k,S}}$
whose split closed fibres contain a smooth rational point.
By the implicit function theorem, there exists $(P_v)_{v \in \Omega} \in X(\A_k)$
with $f(P_v)\notin \{0,\infty\}$ for all $v \in \Omega$.  Let $t_v \in k_v^*=\Gm(k_v)\subset \P^1(k_v)$
denote the coordinate of $f(P_v)$. By
Dirichlet's theorem on primes in arithmetic
progressions (if $k=\Q$, or otherwise by its generalisation in class field theory),
there exists $t_0 \in k^*$
arbitrarily close to~$t_v$ for finite $v \in S$,
with prescribed signs at the real places,
such that~$t_0$ is a unit outside~$S$ except at exactly one place,
say~$v_0$.
Let $c \in \P^1(k)$ be the point with coordinate $t_0 \in k^*=\Gm(k)\subset \P^1(k)$.
An appropriate choice of signs at the real places
together with
an application of Hensel's lemma (for $v\notin S$) and of the implicit function theorem
(for finite $v\in S$)
shows,
just as in the proof of Proposition~\ref{prop:babycase},
 that $X_c(k_v)\neq\emptyset$ for all $v\in\Omega\setminus\{v_0\}$.
Now, as~$X_c$ is a product of conics, this implies,
by quadratic reciprocity,
that $X_c(k_{v_0})\neq\emptyset$; and hence,
by the Hasse--Minkowski theorem for conics, that $X_c(k)\neq\emptyset$.
\end{proof}

The Hasse--Minkowski theorem for quadric surfaces $\sum_{i=0}^3 a_ix_i^2=0$
follows
by letting $f:X\to \P^1_k$ be the fibration into products of two conics
obtained by desingularising and compactifying the affine threefold defined by $a_0x_0^2+a_1x_1^2=-a_2x_2^2-a_3x_3^2=t$,
endowed with the projection to the~$t$ coordinate.

The arguments used in the proofs of Propositions~\ref{prop:babycase}
and~\ref{prop:hassecase}
and in Harari's paper~\cite{harariduke}
were extended in a series of works
by
Colliot-Thélène, Harpaz, Sansuc, Serre, Skorobogatov, Swinnerton-Dyer,
and the author
(see~\cite{ctsansucschinzel},
\cite{serrecollege},
\cite{sdpencils},
\cite{ctsd94},
\cite{ctsksd98},
\cite[\textsection\textsection7--9]{swdtopics},
\cite{hsw},
\cite[\textsection9]{hw}).
For fibrations of rank greater than~$2$,
Dirichlet's theorem on primes in arithmetic progressions is replaced
with a conjectural statement: either Schinzel's hypothesis,
a conjecture on the simultaneous prime values taken by a finite collection of
irreducible polynomials
(Dirichlet's theorem being the case of one linear polynomial),
or a presumably easier conjecture,
formulated in~\cite[\textsection9]{hw} and in~\textsection\ref{subsubsec:conj} below,
on the splitting behaviour, in fixed number fields,
of the primes dividing the values taken
by a finite collection of irreducible polynomials.
Schinzel's hypothesis is useful only under an abelianness assumption:
typically,
in the situation of Remark~\ref{rk:goeswrong}, the extension~$L_m/k$ should be abelian
for Schinzel's hypothesis to be applicable.
Abelianness is necessary to run a reciprocity argument as at the end of the proof of Proposition~\ref{prop:hassecase}.  By a new method
which avoids this reciprocity argument altogether, the abelianness assumption could be removed, thus leading to the following general theorem.

\newcommand{\citehw}{\cite[\textsection9]{hw}}
\begin{thm}[\citehw]
\label{th:mainfibration}
Let $f:X \to \P^1_k$ be a morphism whose generic fibre
is geometrically irreducible and rationally connected.
Assume Conjecture~\ref{conj:hw} (stated below).  Then
Question~\ref{q:fibre} has a positive answer. In particular, if the smooth
fibres of~$f$ satisfy Conjecture~\ref{conj:ctrc}, then so does~$X$.
\end{thm}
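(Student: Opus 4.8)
The plan is to establish the density assertion of Question~\ref{q:fibre} directly; the ``in particular'' clause then follows formally. So I fix $(P_v)_{v\in\Omega}\in X(\A_k)^{\Br(X)}$ and a finite set $S_0\subset\Omega$ at which I wish to approximate, and I seek $c\in\P^1(k)$ and $(Q_v)_{v\in\Omega}\in X_c(\A_k)^{\Br(X_c)}$ with $Q_v$ arbitrarily close to $P_v$ for $v\in S_0$ (and in the same connected component for archimedean~$v$). First I would carry out the usual preliminary reductions: replace~$X$ by a smooth projective model; recall from~\cite{ghs} that, the generic fibre being rationally connected, every fibre of~$f$ contains a component of multiplicity~$1$, and that~$X$ itself is then rationally connected, so that $\Br(X)/\Br_0(X)$ is finite and $X(\A_k)^{\Br(X)}$ is open (Remark~\ref{rmks:conjct}~(i)--(ii)); and, after a $k$\nobreakdash-linear change of coordinates on~$\P^1$, arrange that the non-split points of~$f$ all lie in $\A^1_k$ and that the fibre over~$\infty$ is split. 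Then I would enlarge $S_0$ to a finite set $S\supseteq\Omega_\infty$ large enough to spread~$f$ out to a flat morphism $f'\colon\sX\to\P^1_{\sO_{k,S}}$, to contain the finitely many non-split points $m_1,\dots,m_n\in\A^1_k$, to swallow the leading coefficients and discriminants of their minimal polynomials $P_1,\dots,P_n\in\sO_{k,S}[t]$ and the finite extensions $L_i/k(m_i)$ over which the non-split fibre $X_{m_i}$ acquires a geometrically integral component of multiplicity~$1$, and to absorb the (finitely many) places of small residue characteristic for which the uniform Lang--Weil--Nisnevich bounds are not yet effective.

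Next I would isolate the local conditions a point $c\in\P^1(k)$ must satisfy, exactly as in the proofs of Propositions~\ref{prop:babycase} and~\ref{prop:hassecase}. For $v\in S$ set $\tau_v=f(P_v)\in\P^1(k_v)$; the implicit function theorem provides, for any $c$ close enough to~$\tau_v$ (resp. in the appropriate component, archimedean~$v$), a point $Q_v\in X_c(k_v)$ close to~$P_v$. For $v\notin S$, the reduction of $P_v\in\sX(\sO_v)$ lands in a fibre of~$f'$; if that fibre is split then the Lang--Weil--Nisnevich bounds and Hensel's lemma give $X_c(k_v)\neq\emptyset$ for every $c\in\P^1(k_v)$ whose reduction lies in the split locus. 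The only remaining places are the finitely many $v\notin S$ --- depending on the coordinate $t_0\in k$ of the sought~$c$ --- at which $v(P_i(t_0))\neq 0$ for some~$i$, i.e. at which $c$ reduces into the closure of the non-split point $m_i$; cf. Remark~\ref{rk:goeswrong}. \emph{This is precisely where Conjecture~\ref{conj:hw} enters}: it is designed to produce $t_0\in k^*$ arbitrarily close to $\tau_v$ for $v\in S$, meeting the finitely many sign and congruence constraints imposed by~$S$, and such that for every~$i$ and every place $v\notin S$ with $v(P_i(t_0))\neq 0$ there is a place of~$L_i$ of residue degree~$1$ over~$v$; for such a~$v$ the fibre $X_{m_i}$ then specialises, at the corresponding (residue degree~$1$) place of $k(m_i)$, to a variety over~$\mathbb F_v$ containing a geometrically integral multiplicity-$1$ component defined over~$\mathbb F_v$, whence $X_c(k_v)\neq\emptyset$ again by Lang--Weil and Hensel. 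Crucially, this controlled-splitting statement is weaker than Schinzel's hypothesis and, unlike it, imposes no abelianness on the~$L_i$. At this stage, letting $c$ be the point with coordinate~$t_0$, one has $(Q_v)_{v\in\Omega}\in X_c(\A_k)$ with $Q_v$ close to~$P_v$ for $v\in S_0$.

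It then remains to move $(Q_v)_v$ inside $X_c(\A_k)$ so that it becomes orthogonal to $\Br(X_c)$; since $X_c$ is rationally connected for a general choice of~$t_0$, the group $\Br(X_c)/\Br_0(X_c)$ is finite (Remark~\ref{rmks:conjct}~(ii)), and by a spreading-out argument it is generated, for such~$c$, by the restrictions of a fixed finite set of classes, each of which is either the restriction of an element of $\Br(X)$ or a ``vertical'' class ramified only along the fibres over the~$m_i$. For a restricted class $\beta|_{X_c}$ with $\beta\in\Br(X)$: since $(P_v)_v\perp\beta$ and $Q_v$ is close to~$P_v$ at the finitely many places where~$\beta$ is ramified, while at the remaining places one still has room to slide~$Q_v$ within $X_c(k_v)$, a weak-approximation argument on the fibre matches $\sum_v\inv_v\beta(Q_v)$ to $\sum_v\inv_v\beta(P_v)=0$. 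For a vertical class, a Bloch--Ogus/Kato residue analysis along the fibres of~$f$, in the spirit of~\cite{harariduke}, expresses its local invariants in terms of the splitting of the~$L_i$ at the bad places; the fact that at each new bad place~$v$ the relevant extension~$L_i$ has a place of residue degree~$1$ over~$v$, combined with the global reciprocity law, forces these invariants to sum to zero. This is exactly the step that replaces the reciprocity argument at the end of the proof of Proposition~\ref{prop:hassecase} --- the step that forced $L/k$ to be abelian there and in the Schinzel-based arguments of~\cite{ctsansucschinzel}, \cite{sdpencils}, \cite{ctsd94}. Combining the two cases yields $(Q_v)_v\in X_c(\A_k)^{\Br(X_c)}$, which proves the positive answer to Question~\ref{q:fibre}. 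Finally, if every smooth fibre of~$f$ satisfies Conjecture~\ref{conj:ctrc}, then (choosing~$c$ with $X_c$ smooth, which one still may) $X_c(k)$ is dense in $X_c(\A_k)^{\Br(X_c)}$, so there is a point of $X_c(k)\subseteq X(k)$ close to $(P_v)_{v\in S_0}$; as $(P_v)_v\in X(\A_k)^{\Br(X)}$ was arbitrary, $X(k)$ is dense in $X(\A_k)^{\Br(X)}$.

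I expect the main obstacle to be the Brauer-group bookkeeping of the last paragraph: identifying $\Br(X_c)/\Br_0(X_c)$ uniformly in~$c$ in terms of the geometry of~$f$ near its non-split fibres, and proving that the controlled splitting furnished by Conjecture~\ref{conj:hw} \emph{exactly} annihilates the reciprocity obstruction carried by the vertical classes, with no abelianness hypothesis available --- this is the genuine novelty over all earlier incarnations of the fibration method. A secondary, more technical difficulty is to formulate Conjecture~\ref{conj:hw} so that the single element~$t_0$ it outputs simultaneously approximates $(P_v)_v$ at~$S_0$, lies close enough to each~$\tau_v$ for $v\in S$ that the implicit function theorem applies, and satisfies every required splitting condition; setting this compatibility up cleanly --- together with the passage from a single morphism $f\colon X\to\P^1_k$ to one whose non-split fibres have been put in the convenient normal form above --- is precisely what the preparatory constructions of~\cite[\textsection\textsection8--9]{hw} are designed to handle.
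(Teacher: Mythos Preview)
Your outline is accurate through the construction of $(Q_v)_{v\in\Omega}\in X_c(\A_k)$: the spreading-out, the Lang--Weil/Hensel argument at the split places, and the use of Conjecture~\ref{conj:hw} (in its ``down-to-earth'' form) to guarantee a degree~$1$ place of~$L_i$ above every~$v$ at which~$c$ reduces into~$m_i$ are exactly the ingredients one needs to produce local points on~$X_c$. The divergence from~\cite[\textsection9]{hw} --- and the genuine gap --- is in the final step, where you propose to verify $(Q_v)\perp\Br(X_c)$ \emph{after} having found~$c$, by a residue/reciprocity analysis of the ``vertical'' classes.

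This is precisely the step that the method of~\cite{hw} is designed to \emph{avoid}; the survey says so explicitly just before the statement of the theorem (``a new method which avoids this reciprocity argument altogether''). Your claim that the existence of a degree~$1$ place of~$L_i$ over each bad~$v$, ``combined with the global reciprocity law, forces these invariants to sum to zero'' does not stand on its own: the residue of a class $\alpha\in\Br(X_\eta)$ along~$X_{m_i}$ is a character of the Galois group of the function field of a component of~$X_{m_i}$, and the condition that~$L_i$ acquire a degree~$1$ place above~$v$ controls the \emph{existence} of a smooth $\F_v$\nobreakdash-point on the reduction, not the \emph{value} of that character on the relevant Frobenius. When~$L_i/k_i$ is abelian the two are linked and your argument becomes the classical one of~\cite{ctsansucschinzel}, \cite{ctsd94}; without abelianness there is no such link, and this is exactly why those earlier papers needed the hypothesis.

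What~\cite{hw} does instead is to arrange the Brauer orthogonality \emph{before} searching for~$c$, and to encode it into the very adelic point of~$W$ to which Conjecture~\ref{conj:hw} is applied. Concretely: starting from $(P_v)\in X(\A_k)^{\Br(X)}$, one first invokes Harari's ``formal lemma'' to enlarge~$S$ and modify~$(P_v)$ so that it becomes orthogonal not only to~$\Br(X)$ but to a finite set of classes in~$\Br(X_\eta)$ whose residues along the~$X_{m_i}$ generate the relevant character groups. This extra orthogonality is what fixes the constants~$b_i\in k_i^*$ in the equations $N_{L_i/k_i}(x_i)=b_i(\lambda-a_i\mu)$ defining~$W$, and it is what allows one to lift~$(P_v)$ to an adelic point of~$W$ in the first place. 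One then applies Conjecture~\ref{conj:hw} to~$W$ itself (not merely its ``down-to-earth'' consequence about splitting of primes): the output is a $c\in\P^1(k)$ \emph{together with} an adelic point of~$W_c$, and it is the latter --- the local solutions~$x_{i,v}$ of the norm equations --- that one uses to build~$(Q_v)\in X_c(\A_k)$. The orthogonality $(Q_v)\perp\Br(X_c)$ then follows from the construction, with no reciprocity step. The proof sketch of Theorem~\ref{th:mainfibrationzerocycles} in~\textsection\ref{subsubsec:zerocyclesandfibrations} of the survey, which explicitly says it runs ``the same arguments as in the proof of Theorem~\ref{th:mainfibration}'', makes this structure visible: one passes to~$W$ and applies the conjecture there.

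In short: you have correctly identified where the difficulty lies, but your proposed resolution --- a reciprocity argument exploiting the degree~$1$ splitting --- is the old strategy that breaks without abelianness, not the new one. The missing idea is Harari's formal lemma applied \emph{upstream}, so that Conjecture~\ref{conj:hw} is invoked on~$W$ with the~$b_i$ already chosen, and the Brauer condition on~$X_c$ comes out for free.
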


\subsubsection{A conjecture on strong approximation}
\label{subsubsec:conj}

Let $m_1,\dots,m_n \in \A^1_k$ be closed points.
For each $i \in \{1,\dots,n\}$,
fix a finite extension~$L_i$ of the residue field $k_i=k(m_i)$ of~$m_i$,
let $a_i \in k_i$ denote the coordinate of~$m_i \in \A^1_k$
and fix $b_i\in k_i^*$.
Let $F_i \subset R_{L_i/k}(\A^1_{L_i})$ denote the singular locus of the affine variety defined by $N_{L_i/k}(x_i)=0$
(see~\textsection\ref{subsubsec:sieve}); geometrically, this is a union of linear subspaces
of codimension~$2$.
Letting $\lambda,\mu$ denote the coordinates of~$\A^2_k$,
we can then consider the closed subvariety
$$W \subset
\left(\A^2_k \setminus \{(0,0)\}\right) \times \prod_{i=1}^n \left(R_{L_i/k}(\A^1_{L_i}) \setminus F_i\right)$$
defined by the equations $N_{L_i/k_i}(x_i)=b_i(\lambda-a_i\mu)$ for $i\in\{1,\dots,n\}$, together
with the smooth surjective map $p:W \to \P^1_k$ defined by $p(\lambda,\mu,x_1,\dots,x_n)=[\lambda:\mu]$.

\newcommand{\citehwconj}{\cite[\textsection9]{hw}}
\begin{conj}[\citehwconj]
\label{conj:hw}
Letting $W_c=p^{-1}(c)$,
the set $\bigcup_{c \in \P^1(k)} W_c(\A_k)$ is dense in~$W(\A_k)$.
\end{conj}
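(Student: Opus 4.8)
\emph{Proof strategy.}
The plan is to run the fibration method for $p\colon W\to\P^1_k$, using as the essential input a result from additive combinatorics in the spirit of Examples~\ref{ex:bms} and~\ref{ex:univtorsorconicbundle}. Since $W$ is not proper, the assertion is a form of strong approximation. Spread $p$ out to a smooth surjective $\sO_{k,S}$-morphism $p'\colon\mathscr{W}\to\P^1_{\sO_{k,S}}$ and fix an adelic point $(M_v)_{v\in\Omega}\in W(\A_k)$, enlarging $S$ so that it contains $\Omega_\infty$, the places at which one wishes to approximate $(M_v)$, and all bad reduction. By the implicit function theorem applied at the places of $S$ to $c_v:=p(M_v)$, and by Hensel's lemma at the finite places outside $S$, the conjecture follows once one exhibits $c=[\lambda_0:\mu_0]\in\P^1(k)$ which is $S$-close to $(c_v)_{v\in S}$ and for which $W_c(k_v)\neq\emptyset$ for every $v\notin S$ (integrality of the resulting local points at the places outside $S$ then comes for free from the explicit shape of $W_c$). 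One also has to know that $\Br(W)=\Br_0(W)$, so that no Brauer--Manin obstruction on $W$ intervenes; this should be read off the construction, the codimension-$2$ loci $F_i$ having been removed precisely so that the norm-type fibres contribute no transcendental class and no vertical algebraic class survives — the affine-space situation of Example~\ref{ex:bms} being the model.

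Fixing coprime $\lambda_0,\mu_0\in\sO_{k,S}$, the fibre $W_c$ is, up to a closed subset of codimension~$\geq 2$, the product over $i\in\{1,\dots,n\}$ of the affine varieties $N_{L_i/k_i}(x_i)=b_i(\lambda_0-a_i\mu_0)$. For $v\notin S$, such a variety has a $k_v$-point at every place of $k_i$ where $b_i(\lambda_0-a_i\mu_0)$ is a unit, so the only real constraint comes from the primes $w$ of $k_i$ dividing $\lambda_0-a_i\mu_0$: there local solubility holds provided $L_i\otimes_{k_i}(k_i)_w$ has a factor of residue degree~$1$, as happens, for instance, when $w$ is split in $L_i/k_i$ and divides $\lambda_0-a_i\mu_0$ exactly once. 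Thus the entire problem reduces to choosing $(\lambda_0,\mu_0)$, in a residue class prescribed by the approximation at $S$, so that for each $i$ every prime of $k_i$ dividing $\lambda_0-a_i\mu_0$ outside $S$ is split in $L_i$ (or, more liberally, is such that the $i$-th norm equation is soluble there).

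The heart of the argument is this last step, and here — rather than Schinzel's hypothesis, which would require the $L_i/k_i$ to be abelian — I would invoke a quantitative equidistribution statement of Green--Tao--Ziegler type, as developed and applied by Matthiesen and by Browning--Matthiesen: count the pairs $(\lambda_0,\mu_0)$ in a large box in the prescribed residue class for which each linear form $\lambda_0-a_i\mu_0$ has all of its prime factors split in $L_i$ (a multiplicative condition detected by an appropriate von~Mangoldt-type weight), and show that this count is asymptotic to a positive multiple of the volume of the box, the implied constant being an absolutely convergent product of local densities that is positive by the local solubility secured above; enlarging the box then delivers the sought-for $c$. \textbf{The main obstacle} is to prove this counting statement in the generality at hand: current additive-combinatorial technology treats linear forms with rational coefficients over $\Q$, whereas here the branch points $m_i$ are arbitrary closed points of $\A^1_k$ with residue fields $k_i$ possibly larger than $k$, over an arbitrary number field $k$, so that one is faced with simultaneous ``$L_i$-split value'' problems for quite general configurations of forms — a task that seems to lie beyond the reach of present methods, which is why~\ref{conj:hw} is stated as a conjecture.
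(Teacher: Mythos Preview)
The statement you are addressing is a \emph{conjecture}; the paper does not prove it, and you correctly acknowledge this at the end of your proposal. There is therefore no ``paper's own proof'' to compare against. What the paper does provide is a discussion of the special cases in which the conjecture is known (Example~\ref{ex:degree2} and Theorem~\ref{th:analyticcases}) and the remark that strong approximation on~$W$ off any place would imply it.

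Your outlined strategy is broadly in line with how the known cases are established. The reduction to finding $c=[\lambda_0:\mu_0]$ with prescribed congruences at~$S$ and with each $\lambda_0-a_i\mu_0$ having only ``good'' prime factors in~$k_i$ (those admitting a degree~$1$ place in~$L_i$) is exactly the reformulation given in \cite[\textsection9.1]{hw}, and the appeal to Green--Tao--Ziegler--type input is precisely how Matthiesen handles case~(1) of Theorem~\ref{th:analyticcases}. Your diagnosis of the obstacle---arbitrary residue fields~$k_i$ over an arbitrary number field~$k$---is accurate.

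Two minor points. First, your digression on $\Br(W)=\Br_0(W)$ is not needed: the conjecture is not a statement about the density of $W(k)$ in $W(\A_k)^{\Br(W)}$, but about the density of $\bigcup_c W_c(\A_k)$ in $W(\A_k)$, and no Brauer--Manin condition enters. Second, you omit the purely algebro-geometric mechanism that settles the rank~$\leq 2$ case (Example~\ref{ex:degree2}): when $\sum_i\deg(m_i)\leq 2$, the variety~$W$ is the complement of a codimension~$2$ closed subset in an affine space and hence satisfies strong approximation off any place, with no analytic input whatsoever. This is worth mentioning, since it is the one unconditional general case and the template for the zero-cycle argument of Theorem~\ref{th:mainfibrationzerocycles}.
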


The reader will notice the immediate similarity between the definition of~$W$
and the equations of the universal torsors encountered in Example~\ref{ex:univtorsorconicbundle}.
What is crucial, here, is that~$W$ is not proper and~$W(\A_k)$ is endowed with the adelic topology,
introduced in~\textsection\ref{subsec:integralpoints};
Conjecture~\ref{conj:hw} is really a statement about
integral points.
In particular,
if~$v$ is a finite
place of~$k$ unramified in a finite extension~$L/k$
and~$\sF$ denotes the singular locus of the affine scheme defined by $N_{\sO_L/\sO_k}(x)=0$,
the existence of an $\sO_v$\nobreakdash-point of $R_{\sO_L/\sO_k}(\A^1_{\sO_L}) \setminus \sF$
whose norm from~$L$ to~$k$ has positive valuation can be shown to be equivalent to the existence
of a place of~$L$ of degree~$1$ above~$v$,
a property we already encountered in Remark~\ref{rk:goeswrong}.
A more down-to-earth formulation of Conjecture~\ref{conj:hw},
which makes clearer the analogy with Dirichlet's theorem (in the way we used it in the
proof of Proposition~\ref{prop:hassecase}),
can be found in \cite[\textsection9.1]{hw}.

It is likely that~$W$ always satisfies strong approximation off any given place.
One easily checks that this would imply Conjecture~\ref{conj:hw}
(see \cite[Cor.~9.10]{hw}).

There is also an obvious similarity between the statement of Conjecture~\ref{conj:hw} and Question~\ref{q:fibre}.  The point of Conjecture~\ref{conj:hw} is that the geometry of~$W$
can be much simpler than that of~$X$, so simple that a direct application of analytic or
algebraic methods
can solve particular cases of it.

\begin{example}
\label{ex:degree2}
If $\sum_{i=1}^n \deg(m_i)\leq 2$,
then~$W$ is the complement of a closed subset of codimension~$2$ in an affine
space; therefore it satisfies strong approximation off any given place (see the references
quoted after Question~\ref{q:codimdeux})
and hence Conjecture~\ref{conj:hw} holds in this case.
\end{example}

Combining Example~\ref{ex:degree2} with the proof of Theorem~\ref{th:mainfibration}
yields a positive answer to Question~\ref{q:fibre} for fibrations, into rationally connected varieties, of rank at most~$2$.
In particular, if the smooth
fibres of such a fibration satisfy Conjecture~\ref{conj:ctrc}, then so does~$X$
(a vast generalisation of Proposition~\ref{prop:hassecase}: the Brauer groups
of the smooth fibres are allowed to contain arbitrary non-constant classes).

Let us now illustrate the relevance of analytic methods to Conjecture~\ref{conj:hw}.
As a first example, the arguments which form the proof of Irving's result
mentioned in Example~\ref{ex:irving}, based on sieve methods, turn out
to also prove, in fact,
exactly the case of
Conjecture~\ref{conj:hw} needed to recover the main theorem of~\cite{irving}
as a consequence
of Theorem~\ref{th:mainfibration} and to extend it to general fibrations
sharing the same ``degeneration data'' (see~\cite[Th.~9.15 and~\textsection9.3]{hw}).
This is a case in which $\sum_{i=1}^n \deg(m_i)=4$.
The next theorem lists two results with a more significant scope.

\begin{thm}
\label{th:analyticcases}
If $k=\Q$,
Conjecture~\ref{conj:hw} holds in each of the following two cases:
\begin{enumerate}
\item $\deg(m_i)=1$ for all $i\in\{1,\dots,n\}$;
\item $\sum_{i=1}^n\deg(m_i)\leq 3$ and $\deg(m_i)=1$ for at least one $i\in\{1,\dots,n\}$.
\end{enumerate}
\end{thm}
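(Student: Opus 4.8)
The plan is to reduce Conjecture~\ref{conj:hw}, in both cases, to the assertion that~$W$ satisfies strong approximation off one place of~$\Q$ — which suffices by \cite[Cor.~9.10]{hw}, as recalled in the discussion following Conjecture~\ref{conj:hw} — and then to establish this strong approximation property by an analytic argument suited to the very special shape of~$W$. Up to the excised loci~$F_i$ (of codimension~$2$) and the locus $\lambda=\mu=0$, the variety~$W$ is cut out of a product of Weil restrictions of affine spaces by the equations $N_{L_i/k_i}(x_i)=b_i(\lambda-a_i\mu)$, so the question only involves the splitting behaviour, in the fields~$L_i$, of the prime divisors of the values of the polynomials $N_{k_i/\Q}(b_i(\lambda-a_i\mu))$, whose degrees are the integers $\deg(m_i)$; the two cases of the theorem correspond to two regimes in which this splitting behaviour can be controlled.

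\emph{Case~(1).} When every~$m_i$ is rational, each~$L_i$ is a number field, $a_i,b_i\in\Q$, and $b_1(\lambda-a_1\mu),\dots,b_n(\lambda-a_n\mu)$ are pairwise non-proportional linear forms in~$\lambda,\mu$. Thus~$W$ is, up to the codimension~$2$ removals above, exactly a variety of the type studied in Examples~\ref{ex:bms} and~\ref{ex:univtorsorconicbundle}, but with the~$L_i$ arbitrary rather than quadratic. The plan is to run the additive-combinatorics method of Browning, Matthiesen and Skorobogatov~\cite{bms} in the form refined by Browning and Matthiesen~\cite{browningmatthiesen}, which is precisely what extends Example~\ref{ex:bms} to arbitrary~$L_i$: that method counts integral points of bounded height subject to prescribed congruence conditions, simultaneously forcing the desired local-norm behaviour of the forms $b_i(\lambda-a_i\mu)$, and, carried out with these congruence conditions in place, it should yield strong approximation on~$W$ off any place.

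\emph{Case~(2).} When $\sum_i\deg(m_i)\le 2$ the conclusion is already Example~\ref{ex:degree2}, and when all~$\deg(m_i)$ equal~$1$ it is contained in~(1); the only genuinely new configuration is $n=2$ with $\deg(m_1)=1$ and $\deg(m_2)=2$. Here the plan is to use the rational point~$m_1$ to change coordinates on~$\P^1_\Q$ and eliminate one of~$\lambda,\mu$ from the first equation, thereby presenting~$W$ as an open subset of a one-parameter family governed by a single ordinary linear polynomial (attached to~$L_1$) together with a single irreducible quadratic polynomial $N_{k_2/\Q}(\lambda-a_2\mu)$ (attached to~$L_2$), of total degree~$3$. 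The required simultaneous control of the prime divisors of a linear and of an irreducible quadratic polynomial — making the linear value a product of primes split in~$L_1$ while forcing a prime divisor of the quadratic value to split in~$L_2$ — is within reach of sieve methods of the kind used by Irving~\cite{irving} and in the work on quadratic polynomials represented by norm forms~\cite{bhbquadratic}, \cite{dsw}; one again arranges the sieve so that it produces points in prescribed congruence classes, i.e.\ strong approximation on~$W$, and concludes via \cite[Cor.~9.10]{hw}.

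The main obstacle, in both cases, is that the analytic results one wishes to quote are stated as density of rational points on a smooth proper model — weak approximation — whereas what is needed is the finer statement, about integral points, that~$W$ itself satisfies strong approximation off a place, its adelic topology being sensitive both to congruence conditions and to the removed codimension~$2$ loci~$F_i$. The real work is thus to re-run the additive-combinatorics count (in case~(1)) or the sieve (in case~(2)) while imposing arbitrary congruence conditions at finitely many places and keeping the variables away from~$F_i$. In case~(2) there is the further point that the hypothesis $\sum_i\deg(m_i)\le 3$, together with the presence of a rational~$m_i$, is exactly the range in which the splitting of the prime divisors of a product of a linear and a quadratic polynomial can be controlled — a single cubic point, or anything of larger total degree, lies outside what present sieve technology can deliver.
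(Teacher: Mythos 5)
The paper does not prove Theorem~\ref{th:analyticcases}; it cites it. Case~(1) is a theorem of Matthiesen~\cite{matthiesen}, and case~(2) is a theorem of Browning and Schindler~\cite{browningschindler}. Both papers are devoted precisely to establishing the integral-point, strong-approximation version of the statement on~$W$, with all the congruence conditions and codimension~$2$ removals that you correctly identify as the crux. So what you present as ``the real work still to be done'' is exactly the content of those two references: you have sketched a research programme that has already been carried out, rather than a proof.

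On the substance of your sketch: the general framework is right. Reducing Conjecture~\ref{conj:hw} to strong approximation off a place via~\cite[Cor.~9.10]{hw}, recognising~$W$ as a norm-form variety governed by the linear forms $b_i(\lambda-a_i\mu)$ with degree data $\deg(m_i)$, and noting that the only configuration in case~(2) not already covered by case~(1) or Example~\ref{ex:degree2} is $n=2$ with degrees $(1,2)$, are all correct observations and match the structure of the underlying theorems. Your account of case~(1) is essentially the right picture of what~\cite{matthiesen} does, building on~\cite{browningmatthiesen}. For case~(2), however, you propose sieve methods in the style of Irving~\cite{irving} and of~\cite{bhbquadratic}, whereas Browning and Schindler~\cite{browningschindler} instead build on the Birch-type circle method of~\cite{bhb}. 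So your proposed route for~(2) genuinely diverges from the one actually taken, and it is far from clear that a sieve along Irving's lines would deliver the full range of congruence control needed; Irving's argument is tailored to one very specific configuration (with $\sum\deg(m_i)=4$) and is not a general method. Finally, you do not prove anything: even if the plan is sound, every analytic step you invoke is asserted, not established, and the ``re-running with congruence conditions'' is precisely where the difficulty lies. For an expository statement of this kind the honest proof is the citation, and you should say so explicitly rather than present the citation's content as a plan you would execute.
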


Theorem~\ref{th:analyticcases}~(1) is due to Matthiesen~\cite{matthiesen};
it uses additive combinatorics and builds on the work of Browning and
Matthiesen~\cite{browningmatthiesen}.  Theorem~\ref{th:analyticcases}~(2)
is a very recent result of Browning and Schindler~\cite{browningschindler},
which, similarly, builds on the work of Browning and
Heath-Brown~\cite{bhb}.  The following corollary combines these analytic
results with the fibration method.

\begin{cor}
\label{cor:combinfibre}
Let $f:X \to \P^1_\Q$ be a morphism whose generic fibre is rationally connected.
Assume either that the non-split fibres of~$f$ lie over rational points of~$\P^1_\Q$,
or that~$f$
has a non-split fibre over a quadratic
point and has rank at most~$3$. Then Conjecture~\ref{conj:ctrc} holds for~$X$ as soon as it holds for the smooth
fibres of~$f$.
\end{cor}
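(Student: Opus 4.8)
The plan is to deduce the corollary from Theorem~\ref{th:mainfibration}, once one checks that in each of the two cases the instance of Conjecture~\ref{conj:hw} that actually intervenes is already known.

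Recall the shape of that instance. After a coordinate change on $\P^1_\Q$ we may assume that the finite set of closed points over which the fibre of $f$ is not split lies in $\A^1_\Q$; write these points as $m_1,\dots,m_n$. The fibration method of \cite{hw} attaches to $f$ its degeneration data: for each $i$ a finite extension $L_i$ of $k(m_i)$ (the splitting field of a chosen multiplicity-one component of $X_{m_i}$), the coordinate $a_i$ of $m_i$, and an auxiliary scalar $b_i \in k(m_i)^*$. The variety $W$ of Conjecture~\ref{conj:hw} built from this data encodes the problem of finding $c \in \P^1(\Q)$ close to a prescribed adelic point for which $X_c$ acquires a $k_v$-point at each of the finitely many places $v$ where the closure of $c$ meets the closure of some $m_i$ --- the obstruction analysed in Remark~\ref{rk:goeswrong}. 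Since the generic fibre of $f$ is rationally connected, hence geometrically irreducible, and every fibre of $f$ has a multiplicity-one component (by \cite{ghs}), the hypotheses of Theorem~\ref{th:mainfibration} are met; granting Conjecture~\ref{conj:hw} for this one $W$, that theorem gives a positive answer to Question~\ref{q:fibre} for $f$, and its ``in particular'' clause then yields Conjecture~\ref{conj:ctrc} for $X$ once it is assumed for the smooth fibres of $f$. The only thing to watch is the dictionary: $\sum_{i=1}^n \deg(m_i)$ equals the rank of $f$ (Definition~\ref{def:rank}), so constraints on the rank translate directly into constraints on the degree data of $W$.

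It then remains to run the case analysis. If the non-split fibres of $f$ lie over rational points, then $\deg(m_i)=1$ for all $i$, with no constraint on $n$ or on the $L_i$, and Conjecture~\ref{conj:hw} for $W$ holds by Theorem~\ref{th:analyticcases}~(1). If instead $f$ has a non-split fibre over a quadratic point and rank at most $3$, then $\sum_{i=1}^n \deg(m_i) \leq 3$ with at least one $\deg(m_i)=2$; as the remaining degrees are positive integers summing to at most $1$, the multiset $\{\deg(m_i)\}$ is either $\{2\}$ or $\{2,1\}$. In the first case $\sum_i \deg(m_i)=2$, so $W$ is the complement of a codimension-$\geq 2$ closed subset of an affine space and Conjecture~\ref{conj:hw} holds by Example~\ref{ex:degree2}; in the second case $\sum_i \deg(m_i)=3$ and some $\deg(m_i)=1$, so it holds by Theorem~\ref{th:analyticcases}~(2). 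In all cases Theorem~\ref{th:mainfibration} applies, giving Conjecture~\ref{conj:ctrc} for $X$.

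The substantive work is all imported --- the fibration method of \cite{hw} behind Theorem~\ref{th:mainfibration}, and the analytic results of Theorem~\ref{th:analyticcases} (additive combinatorics for~(1), the circle method for~(2)). What is left to do here is the routine rank/degree dictionary and the short enumeration above; the one point deserving care is the verification that the proof of Theorem~\ref{th:mainfibration} uses only the single instance of Conjecture~\ref{conj:hw} coming from the degeneration data of $f$, rather than the conjecture in general, which is clear from \cite{hw}.
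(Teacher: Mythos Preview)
Your proposal is correct and follows exactly the approach the paper indicates: the corollary is stated immediately after Theorem~\ref{th:analyticcases} with the one-line justification that it ``combines these analytic results with the fibration method,'' and you have accurately unpacked this by reducing to the specific instance of Conjecture~\ref{conj:hw} attached to the degeneration data of~$f$ and then invoking Theorem~\ref{th:analyticcases}~(1), Theorem~\ref{th:analyticcases}~(2), or Example~\ref{ex:degree2} according to the degree multiset. The case analysis (in particular the observation that in the second hypothesis the multiset of degrees must be $\{2\}$ or $\{2,1\}$) and the remark that only a single instance of Conjecture~\ref{conj:hw} is needed are both correct and in line with how the paper uses Theorem~\ref{th:mainfibration} elsewhere (cf.\ the paragraph following Example~\ref{ex:degree2}).
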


One can further combine these results with
Theorem~\ref{th:borovoi}, obtained by Galois cohomological methods,
and deduce that under the hypotheses of Corollary~\ref{cor:combinfibre}, the set~$X(\Q)$ is dense in $X(\A_\Q)^{\Br(X)}$
if the generic fibre of~$f$ is birationally equivalent
to a homogeneous space of a connected linear algebraic group
with connected geometric stabilisers.
The particular case of fibrations into torsors under a constant torus, over~$\Q$,
recovers and extends a number of the results mentioned in~\textsection\ref{subsubsec:examples},
notably those of~\cite{hbsko}, \cite{cthasko}, \cite{bms}, \cite{browningmatthiesen},
\cite{skorodescenttoric} and those of~\cite{dsw}.

\subsubsection{Zero-cycles and fibrations}
\label{subsubsec:zerocyclesandfibrations}

In spite of all of these various methods,
the density of~$X(k)$ in $X(\A_k)^{\Br(X)}$
is still unknown, unconditionally, for general conic bundle surfaces,
even when $k=\Q$.
The situation is better in the analogous context of zero-cycles:
for an arbitrary
number field~$k$,
Salberger~\cite{salberger} established 
Conjectures~\ref{conj:e1} and~(E) for conic bundle surfaces over~$\P^1_k$
 (see also~\cite{salbergerdescent}).
His proof was recast in terms of the fibration method, and then extended to more general
fibrations, by a number of authors (see~\cite{ctsd94},
\cite{ctsksd98},
\cite{ctreglees},
\cite{frossard},
\cite{vanhamel},
\cite{wittdmj},
\cite{liangastuce},
\cite{liangprojectif},
\cite{liangtowards},
\cite{smeets}).
All of these arguments rely on the number field version of
Dirichlet's theorem on primes in arithmetic progressions,
in the spirit of the proof of Proposition~\ref{prop:hassecase}.
When applied to zero-cycles,
the ideas which underlie the proof of Theorem~\ref{th:mainfibration}
yield the following result,  based on an unconditional
version of Conjecture~\ref{conj:hw} for zero-cycles
instead of Dirichlet's theorem.

\begin{thm}[\cite{hw}]
\label{th:mainfibrationzerocycles}
Let $f:X \to Y$ be a morphism whose generic fibre is geometrically irreducible and rationally connected.
Suppose that~$Y$ is a projective space, or a curve
satisfying Conjecture~(E), or the product of a projective space with
such a curve.
If the smooth fibres of~$f$ satisfy Conjecture~\ref{conj:e1} or Conjecture~(E), then~$X$
satisfies the same conjecture.
\end{thm}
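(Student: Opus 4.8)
The plan is to transpose the strategy behind Theorem~\ref{th:mainfibration} to zero-cycles, the essential gain being that the appeal to Dirichlet's theorem on primes in arithmetic progressions (or, for higher rank, to the conjectural Conjecture~\ref{conj:hw}) is replaced by an \emph{unconditional} statement about zero-cycles on auxiliary varieties of the shape occurring in Conjecture~\ref{conj:hw}. First I would reduce to the cases $Y=\P^1_k$ and $Y=C$, a curve satisfying Conjecture~(E). The case $Y=\P^n_k$ reduces to $Y=\P^1_k$ by induction on~$n$: composing~$f$ with a general linear pencil $\P^n_k\dashrightarrow\P^1_k$ and resolving singularities yields, by~\cite{ghs}, a fibration over~$\P^1_k$ whose generic fibre is rationally connected (being fibred into rationally connected varieties over~$\P^{n-1}$) and a general smooth fibre of which is fibred over~$\P^{n-1}$ with rationally connected fibres, hence satisfies Conjecture~\ref{conj:e1} or~(E) by induction. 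The case $Y=\P^n_k\times C$ reduces, via composition with the second projection and the same argument, to the case $Y=C$, the smooth fibres of the resulting fibration over~$C$ being rationally connected varieties fibred over a projective space, for which the conjectures hold by the case already treated. Birational invariance of these conjectures for smooth proper varieties (see~\textsection\ref{subsec:zerocycles}) makes the reductions harmless, and the case $Y=C$ is handled by the fibration method over curves developed in the works cited in~\textsection\ref{subsubsec:zerocyclesandfibrations}, with Conjecture~(E) for~$C$ as input; the new content is concentrated in the case $Y=\P^1_k$.

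For $Y=\P^1_k$, spread~$f$ out to a flat morphism $f':\sX\to\P^1_{\sO_{k,S}}$, and consider a family $(z_v)_{v\in\Omega}$ of local zero-cycles of degree~$1$ on~$X$ lying in the right kernel of the pairing~\eqref{eq:globalpairing}; pushing forward along~$f$ yields local zero-cycles of degree~$1$ on~$\P^1_k$. The heart of the matter is to produce a closed point~$\theta$ of~$\P^1_k$, of suitably large degree, such that: (i)~$\theta$ approximates $f_*(z_v)$ at the finitely many places where one wishes to approximate $(z_v)_v$; (ii)~the Zariski closure of~$\theta$ in $\P^1_{\sO_{k,S}}$ meets only smooth or split fibres of~$f'$, so that the Lang--Weil--Nisnevich estimates together with Hensel's lemma provide a zero-cycle of degree~$1$ on $X_\theta$ over every completion of~$k(\theta)$; and (iii)~there is no Brauer--Manin obstruction to a zero-cycle of degree~$1$ on the smooth $k(\theta)$-variety $X_\theta$. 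Given such a~$\theta$, the hypothesis applied to $X_\theta$ (a smooth, proper, rationally connected variety over the number field~$k(\theta)$) produces a global zero-cycle of degree~$1$ on $X_\theta$, whose push-forward to~$X$ is the sought zero-cycle; carrying out the same construction at an enlarged set of places, together with the compatibility of push-forward and pull-back on Chow groups, upgrades this to the approximation statement of Conjecture~(E).

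The step I expect to be the main obstacle is simultaneously securing~(ii) and~(iii). Condition~(ii) is precisely an analogue for zero-cycles of Conjecture~\ref{conj:hw}: at each place below which the closure of~$\theta$ crosses a non-split fibre, one needs the residue extension of~$k(\theta)$ to acquire a place of degree~$1$ above which that fibre becomes split. For rational points this is where Schinzel's hypothesis, or Conjecture~\ref{conj:hw}, would enter, and it remains conjectural; for zero-cycles it becomes a theorem, because~$\theta$ may be taken of large degree and, crucially, auxiliary closed points may be adjoined, which leaves enough room to absorb the reciprocity constraint that imposed an abelianness hypothesis in earlier Schinzel-based arguments. Condition~(iii) follows from the orthogonality of $(z_v)_v$ to $\Br(X)$ by a version of Harari's formal lemma, which propagates orthogonality from a fixed finite set of classes of the Brauer group of the smooth locus of~$f$ to~$\Br(X_\theta)$ for a well-chosen~$\theta$, using the finiteness of $\Br(X_\theta)/\Br_0(X_\theta)$ (Remark~\ref{rmks:conjct}~(ii)). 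The genuine difficulty is to make~(ii) and~(iii) hold for one and the same~$\theta$: one must choose~$\theta$ so as to meet the splitting conditions at the bad places \emph{and} the reciprocity conditions coming from finitely many Brauer classes at once, and this combined, non-abelian construction --- valid with no hypothesis on the non-split fibres --- is the zero-cycle counterpart of the new method by which the reciprocity argument was removed in the proof of Theorem~\ref{th:mainfibration}.
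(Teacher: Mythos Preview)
Your plan is essentially the paper's own, and the main ideas---reduction to a curve base, passage to effective zero-cycles of large degree, and an unconditional analogue of Conjecture~\ref{conj:hw}---are all present. Two points are worth sharpening.

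First, your split between $Y=\P^1_k$ and $Y=C$ is not how the paper organises things, and your claim that ``the case $Y=C$ is handled by the fibration method over curves developed in the works cited'' is misleading. Those earlier works (Salberger, \cite{ctsd94}, \cite{ctsksd98}, \cite{wittdmj}, \ldots) all carry abelianness or splitness hypotheses on the non-split fibres; they do \emph{not} cover an arbitrary rationally connected generic fibre over a curve. The new method of~\cite{hw} is needed for $Y=C$ just as for $Y=\P^1_k$. The paper's reduction is simply by induction on $\dim(Y)$ down to ``$Y$ is a curve'', and the argument is then uniform.

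Second, the paper makes the unconditional step much more concrete than your phrase ``auxiliary closed points may be adjoined, which leaves enough room''. One passes to $\Sym^d(X)\to\Sym^d(Y)$ and seeks rational points of $\Sym^d(Y)$ (for $Y=\P^1_k$ this is $\P^d_k$); the analogue of Conjecture~\ref{conj:hw} is obtained by replacing $\A^2$ with $\A^{d+1}$ and the linear forms $b_i(\lambda-a_i\mu)$ with $k_i$-linear forms $\phi_i$ on~$k^{d+1}$. For $d\gg 0$ the resulting variety~$W$ is the complement of a codimension~$2$ closed subset in an affine space, so strong approximation off any place holds by the elementary argument recalled after Question~\ref{q:codimdeux}. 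This is the precise geometric mechanism (the analogue of Example~\ref{ex:degree2}) that makes the theorem unconditional, and it deserves to be stated rather than gestured at.
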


The hypotheses and conclusions of Theorem~\ref{th:mainfibrationzerocycles} being birational invariants,
an immediate induction on~$\dim(Y)$
reduces the proof of Theorem~\ref{th:mainfibrationzerocycles}
to the case in which~$Y$ is a curve.
The next step of the proof consists in translating the problem
into a question about \emph{effective} zero-cycles of degree~$d$ for some $d\gg 0$.
One then wants to approximate certain adelic points of $\Sym^d(X)$ by adelic points
lying in the fibres of the natural projection $\Sym^d(X)\to \Sym^d(Y)$ above rational points of~$\Sym^d(Y)$.
Let us assume, for simplicity, that $Y=\P^1_k$, so that $\Sym^d(Y)=\P^d_k$.
At this point, the same arguments as in the proof of Theorem~\ref{th:mainfibration}
lead one to the following generalisation of Conjecture~\ref{conj:hw}:
in the definition of~$W$,
replace~$\A^2$ with~$\A^{d+1}$,
and $b_i(\lambda-a_i\mu)$, for $i \in \{1,\dots,n\}$, with
$k_i$\nobreakdash-linear forms~$\phi_i$
subject to the assumption that the induced $k$\nobreakdash-linear
map $\prod \phi_i:k^{d+1}\to \prod_{i=1}^n k_i$ has maximal rank.
Now it turns out, exactly as in Example~\ref{ex:bms}, that
when the number of variables is large enough with respect
to the number of equations,
the geometry of~$W$ becomes very simple:
for $d\gg 0$, it is easy to see that~$W$ is the
complement of a closed subset of codimension~$2$ in an affine space.
In other words, the phenomenon encountered in Example~\ref{ex:degree2} can be forced to occur, in the context
of zero-cycles, by letting~$d$ grow. This is what makes Theorem~\ref{th:mainfibrationzerocycles} unconditional.

This sketch clearly shows that instead of assuming that the smooth fibres of~$f$
(above the closed, not necessarily rational, points of~$Y$)
satisfy Conjecture~\ref{conj:e1} or Conjecture~(E),
we might as well assume, instead, that they satisfy Conjecture~\ref{conj:ctrc}:
the proof of Theorem~\ref{th:mainfibrationzerocycles} would still yield
Conjectures~\ref{conj:e1} and~(E) for~$X$.
As was noticed by Liang~\cite{liangarithmetic},
applying this remark to the trivial fibration $V \times \P^1_k \to \P^1_k$
shows that a smooth, proper and rationally connected variety~$V$ over~$k$ satisfies Conjectures~\ref{conj:e1} and~(E)
if it satisfies Conjecture~\ref{conj:ctrc} over all finite extensions of~$k$.

Allowing more general bases~$Y$ than those in the statement of
Theorem~\ref{th:mainfibrationzerocycles} remains a challenge; we
refer to~\cite{liangrc} for some results in this direction.

Again, one can combine Theorem~\ref{th:mainfibrationzerocycles}
with Theorem~\ref{th:borovoi} and with the results on Conjecture~(E)
for curves recalled at the end of~\textsection\ref{subsec:zerocycles},
and deduce that Conjectures~\ref{conj:e1} and~(E) are true for the total space of an
arbitrary fibration,
over a projective space or over a curve whose Jacobian has a finite Tate--Shafarevich group,
into homogeneous spaces of connected linear algebraic groups
with connected geometric stabilisers.

\subsubsection{Pencils of abelian varieties}
\label{subsubsec:pencilsofabvar}

The reason why
Theorems~\ref{th:mainfibration} and~\ref{th:mainfibrationzerocycles} restrict to
fibrations with rationally connected generic fibre~$X_\eta$
is that this assumption, or, more generally, the assumption that~$X_\eta$
is simply connected and satisfies $H^2(X_\eta,\sO_{X_\eta})=0$,
makes it much easier to control
how the groups $\Br(X_c)/\Br_0(X_c)$
behave as~$c$ varies, and to relate them to the Brauer group of~$X$
(see~\cite{harariduke}, \cite{hararifleches}, \cite[\textsection4]{hw});
to start with, these quotients are then finite,
by Remark~\ref{rmks:conjct}~(ii).

A very elaborate technique for dealing with fibrations into principal homogeneous spaces
of abelian varieties was initiated by Swinnerton-Dyer~\cite{sdegloffstein}
and developed in~\cite{cssinv},
\cite{bendersd}, \cite{ctbender}, \cite{wittlnm}
for semi-stable families and in~\cite{sdcubic}, \cite{skosdkummer},
\cite{harpazskokummer} for certain families
with additive reduction.
Attempting to describe it here would lead us too far afield;
we refer the reader to the introductions of the above-cited papers for
more detail, and shall be content with mentioning two applications to Conjecture~\ref{conj:ctrc}.
Assuming the finiteness
of the Tate--Shafarevich groups of elliptic curves over quadratic fields,
Swinnerton-Dyer~\cite{sdcubic} showed
that $X(\A_\Q)\neq\emptyset$ implies $X(\Q)\neq\emptyset$
if $X \subset \P^4_\Q$ denotes the smooth cubic threefold defined
by $\sum a_ix_i^3=0$ for $a_0,\dots,a_4\in \Q^*$.
Assuming Schinzel's hypothesis and the finiteness
of the Tate--Shafarevich groups of elliptic curves over number fields,
we proved in~\cite{wittlnm}
that $X(\A_k)\neq\emptyset$ implies $X(k)\neq\emptyset$
for most del Pezzo surfaces of degree~$4$
and for all smooth intersections of two quadrics in~$\P^n_k$ for $n\geq 5$
(and therefore
they satisfy Conjecture~\ref{conj:ctrc}, in view of earlier work of Salberger and
Skorobogatov~\cite{salbergerskorobogatov}
based on the descent method).
This technique can even be applied to non-rationally connected varieties,
for example to diagonal quartic surfaces (see~\cite{sddiagonalquartic},
\cite[\textsection1.7.2]{wittlnm})
and to Kummer varieties (see~\cite{skosdkummer}, \cite{harpazskokummer}).

In a distinct but parallel direction,
Várilly-Alvarado~\cite{varillyisotrivial}
proved,
assuming the finiteness of the Tate--Shafarevich groups of elliptic curves,
that rational points of certain del Pezzo surfaces of degree~$1$
defined over~$\Q$ are dense in the Zariski topology,
by exploiting
the anti-canonical pencil of elliptic curves lying on such surfaces.

\subsubsection{Integral points}
\label{intpoints:fibration}

The fibration method has been used to study integral points, on the
total space of non-proper fibrations over the affine line,
by Colliot-Thélène, Harari and Xu~\cite{ctxu2}, \cite{cthararifamille}.
Difficulties similar to those outlined at the beginning
of~\textsection\ref{subsubsec:pencilsofabvar} quickly appear in this context too.
As a consequence, these two papers restrict to fibrations into
homogeneous spaces of certain semi-simple simply connected linear
algebraic groups and assume that all of the fibres are split.

Harpaz~\cite{harpazlogk3} recently transposed
 to the setting of integral points
 the technique initiated
by Swinnerton-Dyer
and discussed in~\textsection\ref{subsubsec:pencilsofabvar}
for dealing with pencils of principal homogeneous spaces of abelian varieties,
replacing abelian varieties with algebraic tori.
He obtains
unconditional results
in the case of certain
log\nobreakdash-$K3$ surfaces
fibred into affine conics over~$\P^1_\Q$.
These are the first unconditional existence results
for integral (or rational) points on log\nobreakdash-$K3$ (or~$K3$) surfaces
over number fields.

\bibliographystyle{myamsalpha}
\bibliography{slc}
\end{document}